\theoremstyle{plain}
\newtheorem{theorem}[equation]{Theorem}
\newtheorem{proposition}[equation]{Proposition}
\newtheorem{lemma}[equation]{Lemma}
\newtheorem{corollary}[equation]{Corollary}
\newtheorem{definition}[equation]{Definition}
\newtheorem{conjecture}[equation]{Conjecture}
\theoremstyle{remark}
\newtheorem{remark}[equation]{Remark}
\numberwithin{equation}{section}
\newtheorem{example}[equation]{Example} 
\newcommand{\dee}{\partial}
\newcommand{\deebar}{\overline\partial}
\newcommand{\w}{\wedge}
\newcommand{\dual}{{\sf dual}}
\newcommand{\ph}[1]{\tau(#1)}
\newenvironment{enum}{  
\begin{enumerate}[\upshape(\arabic{section}.\arabic{equation}a)] }  
{  \end{enumerate}   }
\newcommand{\itemref}[2] {{\upshape(\ref{#1}\ref{#2})}}
\newcommand{\chd}{W}
\renewcommand{\hat}{\widehat}
\newcommand\reallywidehat[1]{%
\savestack{\tmpbox}{\stretchto{%
  \scaleto{%
    \scalerel*[\widthof{\ensuremath{#1}}]{\kern.1pt\mathchar"0362\kern.1pt}%
    {\rule{0ex}{\textheight}}
  }{\textheight}%
}{2.4ex}}%
\stackon[-6.9pt]{#1}{\tmpbox}%
}
\newcommand{\dbar}{\bar \partial}
\newcommand{\im}{\text{Im}}
\DeclareMathOperator{\re}{Re}
\def\norm#1{\left\Vert#1\right\Vert}
\newcommand{\cf}{{\mathcal F}}
\newcommand{\cg}{{\mathcal G}}
\newcommand{\co}{{\mathcal O}}
\newcommand{\cs}{{\mathcal S}}
\newcommand{\ct}{{\mathcal T}}
\newcommand{\sm}{{\mathscr M}}
\newcommand{\C}{{\mathbb C}}
\newcommand{\R}{{\mathbb R}}
\begin{document}

\title[Leray Transform]{The Leray Transform: factorization, dual $CR$ structures, and model hypersurfaces in $\C\mathbb{P}^2$}
\author{David E. Barrett \& Luke D. Edholm  }
\begin{abstract}
We compute the exact norms of the Leray transforms for a family $\cs_{\beta}$ of unbounded hypersurfaces in two complex dimensions.  The $\cs_{\beta}$ generalize the Heisenberg group, and provide  local projective approximations to any smooth, strongly $\C$-convex hypersurface $\cs$ to two orders of tangency.    This work is then examined in the context of projective dual $CR$-structures and the corresponding pair of canonical dual Hardy spaces associated to $\cs$, leading to a universal description of the Leray transform and a factorization of the  transform through orthogonal projection onto the conjugate dual Hardy space.
\end{abstract}
\thanks{The first author was supported in part by NSF grant number DMS-1500142}
\thanks{{\em 2010 Mathematics Subject Classification:} 32A26}
\address{Department of Mathematics\\University of Michigan, Ann Arbor, MI, 48109-1043 USA}
\email{barrett@umich.edu}
\email{edholm@umich.edu}

\maketitle

\section{Introduction}\label{S:Introduction}

This paper is the first in a series aimed toward better understanding the Leray transform on smooth, strongly $\C$-convex hypersurfaces.    Here, the initial focus is on the following family of models.  For $0 \le \beta <1$, define the hypersurface 
\begin{equation}\label{D:ModelHypersurfaces}
\cs_{\beta} := \left\{ (\zeta_1,\zeta_2)\in \C^2\colon \text{Im}(\zeta_2) = \left|\zeta_1\right|^2 + \beta \re(\zeta_1^2)\right\},
\end{equation}
along with the (unbounded) domain lying on its $\C$-convex side
\begin{equation}\label{D:ModelDomains}
\Omega_{\beta} := \left\{ (z_1,z_2)\in \C^2\colon \text{Im}(z_2) > \left|z_1\right|^2 + \beta\,\text{Re}(z_1^2)\right\}.
\end{equation}

Let $\cs \subset \C\mathbb{P}^n$ be a strongly $\C$-convex hypersurface bounding a domain $\Omega$ on its $\C$-convex side.  The Leray transform $\bm{L}_{\cs}$ (see Remark \ref{R:LerayTerminology} regarding terminology) is a member of the Cauchy-Fantappi\`e class of integral operators re-capturing key properties of the familiar one-variable Cauchy transform.  Its applications include analysis of the Hardy space on the domain $\Omega$:  if $\sigma$ is a measure on $\cs$ and $\bm{L}_{\cs}$ maps $L^2(\cs,\sigma) \to L^2(\cs,\sigma)$ boundedly, 
then the transform identity shows  how functions in the Hardy space $H^2(\cs,\sigma)$ are built from certain rational functions.

As is typical in Hardy space constructions, care must be taken to specify the measure $\sigma$, especially when $\cs$ is unbounded.  For the $\cs_{\beta}$ defined above, the natural measure arising from the Leray transform is $\sigma = dx_1 \wedge dy_1 \wedge dx_2$, where $(x_1,y_1,x_2,y_2)$ are the usual affine coordinates on $\R^4 \cong \C^2$.  This choice of $\sigma$ is a constant multiple of Fefferman hypersurface measure (see \cite{Bar06,Bar16,Fef79,Gup17}) on $\cs_{\beta}$, and consequently has many desirable invariance properties.  (Note that $\sigma$ is not comparable to the Euclidean surface measure on $\cs_{\beta}$.)

The main computation in the first half of the paper is the following:

\begin{theorem}\label{T:ModelNorm}
Let $\bm{L}_{\beta}$ denote the Leray transform of $\cs_{\beta}$, and $\sigma =  dx_1 \wedge dy_1 \wedge dx_2$.  Then $\bm{L}_{\beta}:L^2(\cs_{\beta},\sigma) \to L^2(\cs_{\beta},\sigma)$ is a bounded operator with norm
\begin{equation}\label{E:ModelNorm}
\norm{\bm{L}_{\beta}}= \frac{1}{\sqrt[4]{1-\beta^2}}.
\end{equation}
\end{theorem}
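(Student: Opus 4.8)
The plan is to exploit the large affine symmetry group of $\cs_\beta$ to realize $\bm{L}_\beta$ as a convolution operator, to diagonalize it by a Fourier transform in the central direction, and finally to reduce the entire question to computing the operator norm of a single explicit Gaussian integral operator on $L^2(\C)$.

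First I would parametrize $\cs_\beta$ by $(\zeta_1,t)\in\C\times\R$ via $\zeta_2 = t + i(|\zeta_1|^2+\beta\re(\zeta_1^2))$, so that $\sigma$ is simply Lebesgue measure $dx_1\,dy_1\,dt$. Writing the Leray kernel from the defining function $\rho(z)=\im(z_2)-|z_1|^2-\beta\re(z_1^2)$, its denominator $\langle\partial\rho(\zeta),\zeta-z\rangle$ is affine in $t$: it equals $-\tfrac{i}{2}(t-s) + g(\zeta_1,z_1)$ for an explicit quadratic form $g$ whose real part, after separating $\zeta_1=x+iy$ and $z_1=u+iv$, works out to $-(1+\beta)(x-u)^2-(1-\beta)(y-v)^2$ — negative definite precisely because $\beta<1$, which is where strong $\C$-convexity enters. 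The affine maps $(z_1,z_2)\mapsto(z_1+w_1,\,z_2+2i(\bar w_1+\beta w_1)z_1+b)$ with $\im b = |w_1|^2+\beta\re(w_1^2)$ preserve both $\cs_\beta$ and $\sigma$, and $\bm{L}_\beta$ commutes with them; hence $\bm{L}_\beta$ is convolution on the resulting $\beta$-deformed Heisenberg group.

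Next I would take the Fourier transform in the central variable $t$. Because the denominator is affine in $t$, a residue computation for $\int e^{-i\eta\tau}(g-\tfrac{i}{2}\tau)^{-2}\,d\tau$ shows that only one sign of the frequency $\eta$ survives and that $\bm{L}_\beta$ decomposes as a direct integral of operators $T_\eta$ on $L^2(\C,dA)$ with Gaussian kernels proportional to $|\eta|\,e^{c\eta\,g(\zeta_1,z_1)}$. Since $g$ is homogeneous of degree two, the parabolic dilation $\zeta_1\mapsto\zeta_1/\sqrt{|\eta|}$ implements a unitary equivalence showing that $\|T_\eta\|$ is independent of $\eta$; thus $\norm{\bm{L}_\beta}_{L^2(\cs_\beta,\sigma)}=\|T\|$ for a single explicit Gaussian operator $T$.

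It remains to compute $\|T\|$, and this is the crux. As $T$ has a Gaussian kernel with negative-definite real part in the exponent, $T^{\ast}T$ is a positive self-adjoint Gaussian operator on $L^2(\C)$, hence — via the metaplectic/Bargmann–Fock formalism, or by diagonalizing in a Hermite basis adapted to the quadratic form — unitarily equivalent to $\lambda\,e^{-s_1N_1-s_2N_2}$ for number operators $N_1,N_2$ and constants $\lambda>0$, $s_1,s_2\ge0$. Its spectrum is then geometric with maximum at the Gaussian ground state $\psi_0$, so $\|T\|^2=\lambda=\|T\psi_0\|^2/\|\psi_0\|^2$, a ratio of explicit Gaussian integrals; evaluating it gives $\lambda=(1-\beta^2)^{-1/2}$ and hence \eqref{E:ModelNorm}. (Equivalently, one can extract $\lambda$ from the directly computable quantities $\mathrm{Tr}(T^{\ast}T)$ and $\mathrm{Tr}((T^{\ast}T)^2)$.) The main obstacle is exactly this last step: one must verify that the squeezing ratios are nonnegative, so that the supremum of the spectrum is attained at the ground state rather than approached at infinity, and then carry out the Gaussian normalization carefully enough — tracking the Fefferman-measure constant together with the Fourier and residue constants so that they cancel — to obtain the exact value $(1-\beta^2)^{-1/4}$ rather than a mere bound.
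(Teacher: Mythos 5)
Your setup — parametrizing $\cs_\beta$ so that $\sigma$ is Lebesgue measure, observing that the denominator of the Leray kernel is affine in the central variable with a quadratic form whose real part is definite exactly when $\beta<1$, killing one sign of the central frequency by a residue computation, and using the parabolic dilation to see that the fiber operators $T_\eta$ all have the same norm — coincides with the paper's first stage (Section \ref{SS:L2ViaSizeEstimates}, Propositions \ref{P:InverseFourierComputation} and \ref{P:L2Boundedness(NotSharp)}). Where you genuinely diverge is the decisive step. The paper does \emph{not} attack the twisted-convolution operator on $L^2(\C)$ directly: it first re-parametrizes $\cs_\beta$ by the abelian subgroup $\{\phi_{(r,s)}\}$ acting on the curve $\gamma(t)=(it,i(1-\beta)t^2)$ (Theorem \ref{T:CSTParametrization}), which has the effect of putting the kernel in genuine convolution form in a \emph{second} real variable $r$ as well as in $s$ (see \eqref{D:DefLabelA}, where $r$ enters only through $r_z-r_\zeta$ — this is false for $\re\zeta_1$ in the original coordinates because of the oscillatory term in \eqref{E:ParameterC}). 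A second Fourier transform then exhibits each fiber $\sm_{\xi_r,\xi_s}$ as a rank-one operator $g\mapsto m_0\int m_1 g$, whose norm is the elementary product $\norm{m_0}_2\norm{m_1}_2=(1-\beta^2)^{-1/4}$ (Proposition \ref{P:MHilbertSchmidtNorm}); this also hands over the extremal functions and the projection property used later. Your route keeps the single Fourier transform and proposes to compute the norm of the resulting Gaussian operator by the metaplectic/Bargmann--Fock calculus — precisely the direction the authors raise in Remark \ref{R:WeylTransform}, where they note that the Weyl-transform device they know applies only at $\beta=0$. So your approach, if completed, would answer a question the paper leaves open; it is more systematic (it would apply to any Gaussian kernel, e.g.\ higher-dimensional models), while the paper's is more elementary and self-contained.

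The gap is that the crux is asserted rather than proved, and one of your two proposed ways to finish it fails. The kernel of $T$ has Gaussian decay only in the \emph{difference} variable $\zeta_1-z_1$, so $T$ is not Hilbert--Schmidt and $T^{\ast}T$ is not trace class; indeed the paper's second Fourier transform reveals that $T$ is a direct integral over a residual frequency $\xi_r$ of rank-one operators, so $T^{\ast}T$ attains its top eigenvalue with \emph{infinite} multiplicity. Consequently $\mathrm{Tr}(T^{\ast}T)$ and $\mathrm{Tr}((T^{\ast}T)^2)$ are both infinite and that "equivalent" extraction of $\lambda$ is unavailable. For the same reason the normal form $\lambda e^{-s_1N_1-s_2N_2}$ is degenerate here ($s_1=0$, $s_2=\infty$ in suitable symplectic coordinates), so the standard Williamson/Mehler classification of nondegenerate positive Gaussian operators does not apply as stated; you would need the boundary of the oscillator semigroup, and the identification of the correct squeezed ground state $\psi_0$ — without which the formula $\norm{T}^2=\norm{T\psi_0}^2/\norm{\psi_0}^2$ computes nothing — is exactly the diagonalization you have deferred. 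None of this is irreparable, but as written the proposal reduces the theorem to an unproved claim about a degenerate Gaussian operator; the paper's extra automorphism-group reparametrization and second Fourier transform is precisely the device that makes that claim elementary.
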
 

It is rare to compute exact norms of operators, so this result is interesting in its own right.  But this computation also has significant import since the $\cs_{\beta}$ serve as models in local geometric considerations.  Given a smooth, strongly $\C$-convex hypersurface $\cs \subset \C\mathbb{P}^2$, the $\cs_{\beta}$ can be used to locally approximate $\cs$ to two orders of tangency.  Indeed for each fixed $\zeta^*\in \cs$, there is an automorphism of $\C\mathbb{P}^2$ -- see equation \eqref{E:ProjTrans} -- 
moving $\zeta^*$ to the origin such that the degree two Taylor expansion of $\cs$ after this coordinate change is given by a unique $\cs_{\beta(\zeta^*)}$, $0\le\beta(\zeta^*)<1$.  The precise formulation of this statement is given in Section \ref{S:Background}.  

The mapping properties of the one-dimensional Cauchy transform are well-studied.  In \cite{KerSte78a}, Kerzman and Stein relate the Cauchy transform, $\bm{C}$, on a smooth bounded domain $\Omega \subset \C$ to the Szeg\H o projection and show that the two operators coincide if and only if $\Omega$ is a   disc.  It follows that $\norm{\bm{C}}_2\ge1$ with equality if and only if $\Omega$ is a disc.  However, it can also be extracted from this work that the {\em essential} $L^2$-norm $\norm{\bm{C}}_{\sf{ess}} = 1$ for every smooth $\Omega \subset \C$.  (Recall that the essential norm  -- see \cite{CowMacBook95} -- measures the distance to the set of compact operators; it often occurs in localized analysis.)  See \cite{BarBol07, Bol07, Bol06} for related results about $\bm{C}$.

Much recent work has been done to understand $\bm{L}_{\cs}$ in dimensions two and higher.  In \cite{BarLan09}, the first author and Lanzani study the essential spectrum of this operator on a class of Reinhardt domains in $\C^2$, and their results are in the same vein as Theorem \ref{T:ModelNorm}.  Lanzani and Stein have written a series of recent articles exploring many aspects of this operator.  (See \cite{LanSte13, LanSte14, LanSte17a, LanSte17c, LanSte17b}.)  In \cite{LanSte14}, it is shown that the Leray transform of any {\em bounded} strongly $\C$-convex hypersurface $\cs$ that is at least $C^{1,1}$ smooth maps $L^p(\cs)$ to itself for all $1<p<\infty$.  Counterexamples to the $L^2$-boundedness of $\bm{L}_{\cs}$ exist if either the {\em strong} $\C$-convexity hypothesis or the $C^{1,1}$ smoothness hypothesis is dropped.  See \cite{BarLan09} and \cite{LanSte17b} for more information.  We note that the $\cs_{\beta}$ are unbounded and fail to be even $C^{1}$ at infinity (see Remark \ref{R:Non-C1}).  

In \cite{Bar16}, the first author shows that the $L^2$-norm of $\bm{L}_{\cs}$ measures the effectiveness of pairing two natural Hardy spaces associated to a hypersurface $\cs$.  This is analogous to the role played by the Cauchy transform in an $L^2$-pairing of functions holomorphic inside a Jordan curve with functions holomorphic outside the curve.  (See David's appendix in Meyer's monograph \cite{Meyer82}.)  Recalling the Lanzani-Stein result in the previous paragraph, insight into the interaction of these two Hardy spaces is gained by investigating the norm of $\bm{L}_{\cs}$.


\begin{conjecture}\label{T:EssentialNormLowerBound}
Let $\cs \subset \C\mathbb{P}^2$ be a smooth bounded, strongly $\C$-convex hypersurface and $\bm{L}_{\cs}$ denote its Leray transform.  The essential $L^2$-norm is given by
\begin{equation}\label{E:LerayEssentialConjecture}
\norm{\bm{L}_{\cs}}_{\sf{ess}} = \sup_{\zeta\in \cs }\frac{1}{\sqrt[4]{1-\beta(\zeta)^2}}.
\end{equation}
\end{conjecture}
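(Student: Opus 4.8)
The plan is to establish the conjectured identity by proving matching upper and lower bounds for the essential norm, exploiting the fact -- made precise in Section \ref{S:Background} -- that near each $p \in \cs$ the hypersurface agrees with the model $\cs_{\beta(p)}$ to two orders of tangency, together with the exact model norm supplied by Theorem \ref{T:ModelNorm}. Since $p \mapsto \beta(p)$ is continuous on the compact set $\cs$, the supremum in \eqref{E:LerayEssentialConjecture} is attained, say at $p_0$. The guiding principle is a \emph{localization} statement for the essential norm of Cauchy-Fantappi\`e operators: modulo compact operators, $\bm{L}_{\cs}$ decomposes into pieces supported near the diagonal, and the essential norm equals the supremum over $p$ of the norms of the corresponding ``frozen'' local model operators. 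Since the model operator attached to $p$ is $\bm{L}_{\beta(p)}$, whose norm is $(1-\beta(p)^2)^{-1/4}$, this principle yields \eqref{E:LerayEssentialConjecture} directly.

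For the lower bound I would transplant extremal sequences from the model to $\cs$. The norm in Theorem \ref{T:ModelNorm} is realized asymptotically by functions on $\cs_{\beta_0}$ (with $\beta_0 = \beta(p_0)$) that concentrate in the Heisenberg frequency variable; by the dilation symmetry of $\cs_{\beta_0}$ one may further arrange that these functions concentrate spatially near the origin. Pulling such a sequence $f_k$ back through the projective coordinates that identify the $2$-jet of $\cs$ at $p_0$ with $\cs_{\beta_0}$, one obtains unit vectors $g_k \in L^2(\cs,\sigma)$ supported in shrinking neighborhoods of $p_0$. The two orders of tangency guarantee that $\bm{L}_{\cs} g_k$ and the transplant of $\bm{L}_{\beta_0} f_k$ differ by a term whose norm tends to $0$, so $\norm{\bm{L}_{\cs} g_k} \to (1-\beta_0^2)^{-1/4}$. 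Because the $g_k$ shrink in support and spread in frequency they converge weakly to $0$, whence $\norm{K g_k}\to 0$ for every compact $K$; therefore $\norm{(\bm{L}_{\cs}-K)g_k}$ is bounded below by a quantity approaching $(1-\beta_0^2)^{-1/4}$, giving the desired lower bound on the essential norm.

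For the upper bound I would fix $\ve>0$, choose a finite cover of $\cs$ by coordinate patches small enough that $\beta$ varies by less than $\ve$ on each, and introduce a subordinate partition of unity $\{\chi_j\}$. Writing $\bm{L}_{\cs} = \sum_{j,k}\chi_j \bm{L}_{\cs}\chi_k$, the off-diagonal terms (those with $\operatorname{supp}\chi_j$ and $\operatorname{supp}\chi_k$ separated) have smooth kernels and are compact, so they do not affect the essential norm. Each diagonal term $\chi_j \bm{L}_{\cs}\chi_j$ is compared, via the same projective normalization and a blow-up at the center of the patch, to the model operator $\bm{L}_{\beta(p_j)}$; the two orders of tangency force the difference to be negligible in essential norm, since it is subprincipal and vanishes in the high-frequency limit that governs the essential norm. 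Controlling the overlaps via an almost-orthogonality (Cotlar-type) estimate, one concludes $\norm{\bm{L}_{\cs}}_{L^2_e(\cs)} \le \sup_j \norm{\bm{L}_{\beta(p_j)}} + C\ve \le \sup_{p}(1-\beta(p)^2)^{-1/4} + C\ve$, and letting $\ve \to 0$ finishes the bound.

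The hard part will be making the localization principle rigorous: one needs a tangential calculus for the Leray kernel on $\cs$ in which ``frozen at $p$'' acquires precise meaning and in which the remainder left by matching only two orders of the Taylor expansion is genuinely negligible at the level of essential norms. Concretely, the main obstacle is to show that the error operator $\chi_j(\bm{L}_{\cs}-\widetilde{\bm{L}}_{\beta(p_j)})\chi_j$ -- where $\widetilde{\bm{L}}_{\beta(p_j)}$ denotes the transplanted model -- has small essential norm. Because the models are unbounded and their exact norms arise from a dilation symmetry absent on the compact $\cs$, this cannot be read off from a direct kernel comparison; instead it requires a careful scaling analysis tying the high-frequency behavior of $\bm{L}_{\cs}$ near $p_j$ to the dilation-invariant model operator. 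Establishing the compactness of the remainder -- rather than mere boundedness with small norm -- is the crux, and is precisely where the strong $\C$-convexity hypothesis and the exact two-jet matching must be used in full.
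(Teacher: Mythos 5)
The first thing to note is that the paper does not prove this statement: it appears as Conjecture \ref{T:EssentialNormLowerBound}, and the authors explicitly postpone its treatment to a subsequent work, offering only heuristic evidence (the analogy with the essential norm of the planar Cauchy transform, the model computation of Theorem \ref{T:ModelNorm}, and the Reinhardt-domain verification in \cite{BarLan09}). There is therefore no proof in the paper to compare yours against; what you have written is a strategy for what the paper treats as an open problem.

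As a strategy it is the natural one --- localize, freeze the $2$-jet, invoke the model norm --- but it has genuine gaps at precisely the points you flag, and flagging them does not close them. First, the assertion that $\chi_j(\bm{L}_{\cs}-\widetilde{\bm{L}}_{\beta(p_j)})\chi_j$ is negligible in essential norm does not follow from ``two orders of tangency'' alone: the Leray denominator $\left<\partial\rho(\zeta),\zeta-z\right>$ is anisotropic (degree $1$ in $\zeta_1-z_1$ but degree $2$ in $\zeta_2-z_2$ under the parabolic dilations $\delta_\alpha$ of Remark \ref{R:NonIsoDilation}), so one must verify that the cubic Taylor remainder genuinely gains an order against the homogeneous dimension of $(\cs,\sigma)$, that the resulting error kernel is not merely bounded with small norm but compact, and that the overlap terms really do obey a Cotlar-type bound for these non-convolution kernels; none of this is supplied. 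Second, on the lower-bound side, the extremizers of Theorem \ref{T:ModelNorm} (see \eqref{E:AchievingNormOfL_beta}) are global, dilation-covariant objects on an unbounded hypersurface, and the model norm is produced by that global symmetry; to transplant them you must quantify how the truncated model operator recovers the full norm $(1-\beta_0^2)^{-1/4}$ as the spatial truncation scale and the dilation parameter are coupled, and then show the transplantation error beats this rate. Neither step is routine --- if it were, the authors would presumably have stated a theorem rather than a conjecture --- so your proposal should be regarded as a plausible program rather than a proof.
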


We postpone further discussion of this conjecture to a subsequent work in this series, other than a quick run through of evidence for why we believe it to be true.  First, note the similarity between this statement and the situation for the Cauchy transform as described above.  A disc of varying radius will osculate any smooth, bounded domain $\Omega$ in the plane.  The essential norm of an operator frequently arises in local considerations, and the fact that $\norm{\bm{C}_{D}}_2=1$ on every disc $D$ leads to the corresponding result for the essential $L^2$-norm of the Cauchy transform on $\Omega$.  In two complex dimensions the osculating hypersurfaces giving local approximation are the $\cs_{\beta}$, so Theorem \ref{T:ModelNorm} suggests the form of this conjecture.  Additionally, it is shown in \cite{BarLan09} that Conjecture \ref{T:EssentialNormLowerBound} holds for all smooth bounded, strongly convex Reinhardt domains.  

After proving Theorem \ref{T:ModelNorm}, we continue to develop the general theory with the use of projective dual coordinates.  These coordinates depend on the choice of a matrix $M$, corresponding to a particular affinization of projective space.  The Leray transform is shown to be given by a universal formula involving projective dual coordinates in Proposition \ref{P:GenMLeray}.  On every bounded $\C$-convex hypersurface enclosing the origin, this formula resembles the form of the Szeg\H o projection of the unit ball -- compare the formula \eqref{E:LerM1} with \eqref{E:SzegoSphere}.  On every unbounded $\C$-convex hypersurface which can be written as a graph of a smooth function over $\C\times\R$, this formula resembles the form of the Szeg\H o projection of the Siegel upper half space -- compare  \eqref{E:LerM2} with \eqref{E:Szego_S_0}.  This general procedure leads to the identification of a preferred measure on $\cs$ in \eqref{E:NuDef}.  It should be emphasized that while $\bm{L}_{\cs}$ only coincides with the Szeg\H o projection in  special cases (affine images of the unit ball or Siegel upper half space \cite{Bol08}), the resemblance of the formulas suggests a relationship between the two operators that should be further explored.

The dual coordinates may be used to induce a secondary $CR$ structure on $\cs$, called the {\em projective dual} $CR$ structure.  This is carried out in Section \ref{SS:ProjDualCR}.  From the two $CR$ structures and the preferred measure mentioned above, we obtain a pair of Hardy spaces on $\cs$, denoted $H^2(\cs)$ and $H^{2}_\dual(\cs)$.  The restriction $\bm{Q}_{\cs}$ of $\bm{L}_{\cs}$ to 
$\overline{H^2_\dual(\cs)}$ provides an explicit invertible map $\overline{H^2_\dual(\cs)}\to H^2(\cs)$, and the full map $\bm{L}_{\cs}$ factors through $\bm{Q}_{\cs}$.  (Most of the objects mentioned just above depend on the choice of $M$ -- with transparent transformation laws -- but the dual $CR$ structure is independent of $M$.)

The paper is structured as follows.  Section \ref{S:Background} collects notation, definitions and necessary background material.  It also motivates our problem by recalling results from one complex variable.  Section \ref{S:LerayTransform} begins with the analysis of  $\bm{L}_{\beta}$ via one Fourier transform together with size estimates.  We then use certain projective automorphisms of $\cs_{\beta}$ to obtain a new parametrization of $\bm{L}_{\beta}$ in Section \ref{SS:ReparametrizingTheKernel}, leading to the use of a second Fourier transform in the proof of Theorem \ref{T:ModelNorm} in Section \ref{SS:MainProof}.  The projective dual coordinates and related constructions are developed in Section \ref{S:DualCR}.  In Sections \ref{S:FactorLS} and \ref{S:DualHardy} we explain the factorization results for $\bm{L}_{\cs}$, with special attention drawn to the case of $\bm{L}_{\beta}$.   Appendix \ref{A:LbetaConvegence} contains  further analysis  of automorphisms of $\cs_{\beta}$ leading to basic $L^2$-estimates related to $\bm{L}_{\beta}$.

The authors would like to acknowledge the two anonymous referees for many useful suggestions to improve this paper.


\section{Background}\label{S:Background}

One motivating factor for the study of the Leray transform is the desire for a higher dimensional analogue of the Cauchy transform in one complex variable.  For any smooth bounded domain $\Omega \subset \C$, recall that the Cauchy transform of a function $f$ defined on the boundary $b\Omega$ is given by  
\begin{equation}\label{E:CauchyTransformFormula}
\bm{C}f(z):= \frac{1}{2\pi i}\int_{b\Omega}\frac{f(\zeta)}{\zeta-z} \, d\zeta.
\end{equation}
Denoting the Cauchy kernel by $C(z,\zeta) = \frac{1}{2 \pi i (\zeta-z)} \, d\zeta$, we highlight three essential properties:  
\begin{itemize}
\item[(i)] {\bf Reproduction.}  Given $z\in\Omega$ and $f\in \co(\overline{\Omega})$, we have $\bm{C}f(z) = f(z)$. 

\item[(ii)] {\bf Holomorphicity.} For each fixed $\zeta\in b\Omega$, the kernel $C(z,\zeta)$ is holomorphic in $z$, and thus can be used to construct holomorphic functions. 

\item[(iii)] {\bf Domain Independence.} The kernel $C(z,\zeta)$ is independent of $\Omega$, in that no explicit reference to a defining function of $\Omega$ is made in the coefficient function of $d\zeta$.
\end{itemize}

Unfortunately, these three properties never simultaneously hold in this form for a kernel-operator pair in higher dimensions.  See \cite{LanSte13} for an excellent survey detailing these matters.  We shall insist on keeping the reproduction property (i), and will look for higher dimensional successors of the Cauchy transform by dropping one of the other conditions. The Bochner-Martinelli formula  satisfies (i) and (iii), but fails to be holomorphic in the parameter $z$ (see \cite{Krantz_scv_book,Range86}).  Though useful in many respects, this formula cannot be used to create holomorphic functions from more general boundary data.

Restricting to strongly $\C$-convex hypersurfaces $\cs\subset\C\mathbb{P}^n$, we are able to define the Leray kernel-operator pair which satisfies properties (i) and (ii).  As above, property (ii) allows construction of holomorphic functions from rather general boundary data.  And while the Leray kernel is  domain dependent when $\cs$ is viewed as a subset of $\C\mathbb{P}^n$, a universal description involving projective dual variables is presented in Section \ref{S:DualCR} (see Remark \ref{R:DomainIndependenceLeray}).



\subsection{Strong $\C$-convexity}\label{SS:StrongCConvexity}








Underlying the Leray transform is the geometric notion of $\C$-convexity.  There are several equivalent definitions (see \cite{ScandBook04}), but we focus here on a differential condition along $\cs$.  We first note the following proposition and refer to Section 5.2 in \cite{Bar16} for its proof.

\begin{proposition}\label{P:StrongCConvexity}
Let $\cs$ be a smooth, strongly pseudoconvex real hypersurface in $\C\mathbb{P}^n$ and let $p\in\cs$.  Then there is an automorphism of $\C\mathbb{P}^n$ moving $p$ to $0$ so that the transformed $\cs$ takes the following form near $0$.
\begin{equation*}
\mathrm{Im}(\zeta_n) = \sum_{j,k=1}^{n-1} \alpha_{j,k}\zeta_j\bar{\zeta}_k + \mathrm{Re}\left(\sum_{j,k=1}^{n-1} \beta_{j,k} \zeta_j \zeta_k \right) + c\, \mathrm{Re}(\zeta_n)^2 + O(\norm{(\zeta_1,\dots,\zeta_{n-1},\mathrm{Re}(\zeta_n))}^3).
\end{equation*}
The real constant $c$ may be set arbitrarily, but the sum $\sum \alpha_{j,k}\zeta_j\bar{\zeta}_k + \mathrm{Re}\left(\sum\beta_{j,k} \zeta_j \zeta_k \right)$ is determined up to a scalar multiple and a $\C$-linear change of coordinates in $(\zeta_1, \dots, \zeta_{n-1})$.
\end{proposition}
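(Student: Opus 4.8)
The plan is to use the transitivity of $\mathrm{Aut}(\C\mathbb{P}^n)=\mathrm{PGL}(n+1,\C)$ and to track, very explicitly, the effect of its elements on the two-jet of a graphing function at the base point. Write $\zeta'=(\zeta_1,\dots,\zeta_{n-1})$ and work in affine coordinates $\zeta_j=Z_j/Z_0$, so that the isotropy subgroup fixing $0$ consists of the maps $\zeta\mapsto (A\zeta)/(1+\langle \eta,\zeta\rangle)$ with $A\in\mathrm{GL}(n,\C)$ and $\eta=(\eta_1,\dots,\eta_n)$ a covector. First I would choose a projective automorphism carrying $p$ to $0$, followed by a $\C$-linear map arranging the real tangent plane $T_0\cs$ to be $\{\mathrm{Im}\,\zeta_n=0\}$; this forces the complex tangent $T_0\cs\cap iT_0\cs$ to be $\{\zeta_n=0\}$. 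Since $\cs$ is a smooth real hypersurface it is then locally a graph $\mathrm{Im}\,\zeta_n=\phi(\zeta',\mathrm{Re}\,\zeta_n)$, and the choice of tangent plane is exactly the statement that the linear part of $\phi$ vanishes. Expanding to second order yields
\[
\phi=\sum_{j,k}\alpha_{jk}\zeta_j\bar\zeta_k+\mathrm{Re}\Big(\sum_{j,k}\beta_{jk}\zeta_j\zeta_k\Big)+2\,\mathrm{Re}\Big(\sum_j\gamma_j\zeta_j\Big)\mathrm{Re}\,\zeta_n+c\,(\mathrm{Re}\,\zeta_n)^2+O(3),
\]
with $\alpha$ Hermitian (the Levi form, definite by strong pseudoconvexity), $\beta$ symmetric, and $\gamma\in\C^{n-1}$ the mixed coefficients. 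The normal form is precisely the claim that the $\gamma$-terms can be removed and that $c$ is at our disposal.

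The genuinely projective (non-affine) freedom enters in removing the mixed terms. Applying $\zeta\mapsto\zeta/(1+\langle\eta',\zeta\rangle)$ with $\eta'=(\eta_1,\dots,\eta_{n-1},0)$ and reducing modulo the surface relation $\zeta_n=\mathrm{Re}\,\zeta_n+i\phi$ with $\phi=O(2)$, one finds that the two-jet of $\phi$ is altered only in its mixed part, which acquires the extra summand $-\mathrm{Re}\,\zeta_n\cdot\mathrm{Im}\big(\sum_k\eta_k\zeta_k\big)$; the choice $\eta_k=2i\gamma_k$ cancels $\gamma$ exactly. Crucially this leaves $\alpha$ and $\beta$ untouched at second order, so no nondegeneracy hypothesis is needed here. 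In the same spirit, the automorphism $\zeta_n\mapsto\zeta_n/(1+i\tau\,\zeta_n)$ (with the matching $\zeta_j\mapsto\zeta_j/(1+i\tau\,\zeta_n)$) shifts the coefficient of $(\mathrm{Re}\,\zeta_n)^2$ by $-\tau$ while fixing all remaining second-order data, so $c$ may be driven to any prescribed real value. This establishes existence of the normal form.

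For the final assertion I would identify the subgroup of the isotropy group preserving the normalized two-jet and compute its action on the pair $(\alpha,\beta)$. The transformations that genuinely move $(\alpha,\beta)$ are the block substitutions $\zeta'\mapsto A'\zeta'$ with $A'\in\mathrm{GL}(n-1,\C)$, which act on $\alpha$ and $\beta$ by congruence (a $\C$-linear change of coordinates in $\zeta'$), together with the real dilations $\zeta_n\mapsto\rho\,\zeta_n$, which rescale the entire right-hand side, hence $(\alpha,\beta)$, by $1/\rho$. The point to verify is that every other normal-form-preserving motion acts \emph{trivially} on $(\alpha,\beta)$: the constraint ``no mixed terms'' slaves the covector to the off-diagonal part of $A$, and a direct check shows neither of these perturbs the pure $\zeta'$-quadratic. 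This produces exactly the claimed ambiguity --- a scalar multiple and a $\C$-linear change of coordinates in $\zeta'$.

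\textbf{The main obstacle} is the second-order bookkeeping in the normalization step: one must expand the nonlinear denominators, substitute back through $\zeta_n=\mathrm{Re}\,\zeta_n+i\phi$, and confirm that the projective maps perturb only the intended coefficients (the mixed terms, respectively the $(\mathrm{Re}\,\zeta_n)^2$ coefficient) and not the Levi form $\alpha$ or the form $\beta$. The companion difficulty is the invariance computation, where one must show the residual stabilizer acts on $(\alpha,\beta)$ through precisely $\mathrm{GL}(n-1,\C)\times\R_{>0}$ and nothing larger; this is what pins down the invariant content to a scalar and a linear change of variables in $\zeta'$.
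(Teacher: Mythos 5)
Your argument is correct, but note that the paper itself supplies no proof of this proposition --- it defers entirely to Section 5.2 of \cite{Bar16} --- so there is nothing internal to compare against; what you have written is a valid self-contained substitute. The key computations check out: substituting $\zeta = w/(1-\langle\eta',w\rangle)$ and reducing modulo $\mathrm{Im}\,\zeta_n = O(2)$ does produce exactly the extra summand $-\mathrm{Re}\,\zeta_n\cdot\mathrm{Im}\bigl(\sum_k \eta_k\zeta_k\bigr)$ and nothing else at second order, so $\eta_k = 2i\gamma_k$ kills the mixed terms while leaving $\alpha$, $\beta$, $c$ alone; the map $\zeta\mapsto\zeta/(1+i\tau\zeta_n)$ shifts $c$ by $-\tau$ as claimed; and in the uniqueness step the off-diagonal block $b$ of the linear part feeds only into the mixed and $(\mathrm{Re}\,\zeta_n)^2$ coefficients (via $Q(u+v)=Q(u)+B(u,v)+Q(v)$), so the pure $\zeta'$-quadratic transforms only by the real scalar $\lambda$ and the congruence by $A'$. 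Two cosmetic points: the dilation $\zeta_n\mapsto\rho\,\zeta_n$ rescales the form by $\rho$, not $1/\rho$ (immaterial to the conclusion), and you should say explicitly that the scalar is forced to be \emph{real} because preservation of the tangent plane $\{\mathrm{Im}\,\zeta_n=0\}$ forces $\lambda\in\R^*$ --- which is what makes ``scalar multiple'' meaningful for a real-valued form.
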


We now use this normal form to set up our definition.

\begin{definition}\label{DStrongCConvexity}
Let $\cs$ be connected and strongly pseudoconvex in $\C\mathbb{P}^n$ and suppose (after a projective automorphism) that $\cs$ is given in the form of Proposition \ref{P:StrongCConvexity} above.  We say that $\cs$ is strongly $\C$-convex if and only if
\begin{equation}\label{E:PositiveDefiniteForm1}
\sum_{j,k=1}^{n-1} \alpha_{j,k}\zeta_j\bar{\zeta}_k + \mathrm{Re}\left(\sum_{j,k=1}^{n-1} \beta_{j,k} \zeta_j \zeta_k \right)
\end{equation}
is positive definite on $(\zeta_1,\dots,\zeta_{n-1})$.  This is equivalent to saying that the complex tangent hyperplane $\C^{n-1}\times \{0\}$ has minimal order of contact with $\cs$.
\end{definition}

Because the real constant $c$ in Proposition \ref{P:StrongCConvexity} may be chosen arbitrarily, set $c=0$ and diagonalize the form to 
\begin{equation}\label{E:PositiveDefiniteForm2}
\mathrm{Im}(\zeta_n)=\sum_{j=1}^{n-1}\alpha_j|\zeta_j|^2 + \mathrm{Re}\left(\sum_{j=1}^{n-1}\beta_j \zeta_j^2 \right) + O(\norm{(\zeta_1,\dots,\zeta_{n-1},\mathrm{Re}(\zeta_n))}^3),
\end{equation}
with each $\beta_j\ge0$.  Strong $\C$-convexity at the origin is equivalent to saying that each $\beta_j < \alpha_j$.  When $n=2$, we may set $\alpha_1=1$ which leads to our hypersurfaces
$$
\cs_{\beta} := \left\{ (\zeta_1,\zeta_2)\in \C^2: \text{Im}(\zeta_2) = \left|\zeta_1\right|^2 + \beta\, \rm{Re}(\zeta_1^2)\right\}.
$$
\begin{remark}\label{R:StrongCConvex => StrongPseudoconvex}
Every strongly $\C$-convex hypersurface is (by our definition) strongly pseudoconvex, and osculating biholomorphic images of the sphere  (or of $\cs_{0}$) are a useful tool in the  study  of strongly pseudoconvex hypersurfaces.  The Leray transform has good transformation properties under automorphisms of $\C\mathbb{P}^2$ (but not under general biholomorphic maps); working with this smaller set of mappings we need a larger set of models, and the projective images of the $\cs_{\beta}$ suitably osculate any strongly $\C$-convex hypersurface. $\lozenge$
\end{remark}


\subsection{The Leray transform}\label{SS:LerayTransform}
Let $\cs$ be a compact strongly $\C$-convex $C^2$ hypersurface in $\C^n$, and let $\Omega$ be the bounded domain  with boundary $\cs$.  If $f$ is a function on $\cs$, the Leray transform maps $f$ to a holomorphic function on $\Omega$ whenever the following integral makes sense.
\begin{align}
&\bm{L}_{\cs}f(z) := \int_{\cs} f(\zeta) \, \mathscr{L}_{\cs}(z,\zeta), \label{E:LerayIntegralFormula} \\
&\mathscr{L}_{\cs}(z,\zeta) := \frac{1}{(2\pi i)^n}  \frac{\partial\rho(\zeta) \wedge (\bar \partial\partial\rho(\zeta))^{n-1}}{\left< \partial\rho(\zeta),(\zeta - z) \right>^n}. \label{E:LerayKernel}
\end{align}
Note that the {\em Leray kernel} $\mathscr{L}_{\cs}$ is a form of bi-degree $(n,n-1)$.  Here $\rho$ is a defining function for $\cs$, and $\left<\cdot,\cdot \right>$ is the natural {\em bilinear} pairing between $(1,0)$-forms and vectors.  This definition is independent of the choice of defining function.  See chapter IV of \cite{Range86} for more information. 

Formula \eqref{E:LerayIntegralFormula} can actually be defined for a more general class of hypersurfaces. If $\cs$ is bounded and has the property that all complex tangent hyperplanes never intersect $\Omega$, the integral is defined for all $z \in \Omega$.  Such hypersurfaces are simply called $\C$-convex.  (See Proposition 2.5.9 in \cite{ScandBook04}.  In the context of smooth connected hypersurfaces the notion of $\C$-convexity coincides with the notion of $\C$-linear convexity, but these notions diverge in more general situations -- see Chapter 2 in \cite{ScandBook04} for a complete discussion of these matters.)

Now consider a compact strongly $\C$-convex $C^2$ hypersurface $\cs$ in $\C\mathbb{P}^n$.  The complex tangent hyperplane  to $\cs$ at some $\zeta\in\cs$ meets $\cs$ only at $\zeta$
(see Remark 2.5.11 in \cite{ScandBook04}).  Pushing the hyperplane away from $\cs$ by a small multiple of the normal vector to $\cs$ at $\zeta$ we obtain a hyperplane disjoint from $\cs$ (lying on the concave side of $\cs$).  This hyperplane may be sent to the hyperplane at infinity by means of a projective transformation.  Thus $\cs$ is projectively equivalent to a  compact strongly $\C$-convex $C^2$ hypersurface in $\C^n$; Bolt's transformation law (previously mentioned in Remark \ref{R:AffineTransforms} above) can now be used to defined the Leray transform in terms of its euclidean counterpart.

Once it has been established that \eqref{E:LerayIntegralFormula} converges for $z \in \Omega$, it is of interest to think of $\bm{L}_{\cs}$ as a map from functions on $\cs$ to functions on $\Omega$, and to understand the boundary values of $\bm{L}_{\cs}f(z)$.  For $z \in \cs$, \eqref{E:LerayIntegralFormula} is a singular integral and must be interpreted in a suitable way.

For {\em unbounded} $\C$-convex hypersurfaces, it is not a priori clear that \eqref{E:LerayIntegralFormula} converges even for $z \in \Omega$.  However when $\cs = \cs_{\beta}$, we show the following hold:
\begin{itemize}

\item[(a)] Given $f \in L^2(\cs_{\beta}, \sigma)$, $\bm{L}_{\beta}f$ is a holomorphic function on $\Omega_{\beta}$.  (Appendix \ref{SS:L_beta(f)isHolo}.)

\item[(b)]  $\bm{L}_{\beta}$ is a bounded operator from $L^2(\cs_{\beta},\sigma) \to L^2(\cs_{\beta},\sigma)$. (Proposition \ref{P:L2Boundedness(NotSharp)}.)

\item[(c)] $\norm{\bm{L}_{\beta}}_{L^2(\cs_{\beta},\sigma)} = \frac{1}{\sqrt[4]{1-\beta^2}}$. (Theorem \ref{T:ModelNorm}, proved in Section \ref{SS:MainProof}.)

\item[(d)] The Leray transform of $\cs_{\beta}$ is a projection, i.e., $\bm{L}_{\beta} \circ \bm{L}_{\beta} = \bm{L}_{\beta}$. (Corollary \ref{C:LerayIsAProjection}.)  It is {\em not} orthogonal, except when $\beta = 0$, in which case the Leray transform coincides with the Szeg\H o projection.

\item[(e)] The closure of $S_\beta$ in complex projective space fails to be $C^1$.  (Remark \ref{R:Non-C1}.)

\end{itemize}

The measure used above is $\sigma = dx_1 \wedge dy_1 \wedge dx_2$.  We postpone the discussion of the surface measures considered on a more general $\cs$ until Section \ref{S:DualCR}.

\begin{remark}
Equation \eqref{E:LerayIntegralFormula} in dimension one reduces to \eqref{E:CauchyTransformFormula}. $\lozenge$
\end{remark}

\begin{remark}\label{R:LerayTerminology}
$\bm{L}_{\cs}$ is often referred to in the literature as the Cauchy-Leray transform, the Leray-Aizenberg transform or the Cauchy transform for convex domains.  $\lozenge$
\end{remark}

\subsection{The Fourier transform}\label{SS:FourierTransform}
The Fourier transform is a standard tool with varying normalizations.  We include this section to collect basic facts for the version used in this paper. 

Use $\cf$ and $\cf^{-1}$ to denote the Fourier and inverse Fourier transforms on $\R$, respectively, with the following convention:  given $h\in L^1(\R)$,
\begin{equation}\label{D:FourierDef}
\cf h(\xi):=  \int_{-\infty}^{\infty} h(x)e^{-2\pi i x\xi}\, dx,
\ \ \ \ \  \ \
\cf^{-1}h(\xi):=  \int_{-\infty}^{\infty} h(x)e^{2\pi i x\xi}\, dx.
\end{equation}
It is standard to extend $\cf$ and $\cf^{-1}$ to operators on $L^2(\R)$ using the density of $L^1 \cap L^2$ functions in $L^2$.  The of placement of the $2\pi$ in \eqref{D:FourierDef} guarantees that both $\cf$ and $\cf^{-1}$ are isometries of $L^2$, i.e., for $g \in L^2(\R)$, we have Plancherel's identity:
\begin{equation}\label{E:Plancherel}
\norm{g}_2 = \norm{\cf g}_2 = \norm{\cf^{-1} g}_2.
\end{equation}

Equations \eqref{E:MultipleFTransformSubscripts} and \eqref{E:FourierConvolution} below are written using the inverse Fourier transform $\cf^{-1}$ because this operator is used extensively in later sections.  When dealing with functions of more than one variable, we often subscript both the transform and the phase space variables.  Starting with a two variable function $H(x,y)$,
\begin{align}\label{E:MultipleFTransformSubscripts}
\cf^{-1}_{x,y}H(\xi_x,\xi_y) := \int_{-\infty}^{\infty} \int_{-\infty}^{\infty} H(x,y) e^{2\pi i (x\xi_x + y\xi_y)} \, dx\,dy.
\end{align}
Also note that the formulas in \eqref{D:FourierDef} transform convolutions in the following way:
\begin{align}\label{E:FourierConvolution}
\cf^{-1}(F*G) &= \cf^{-1}(F)\cdot \cf^{-1}(G).
\end{align}
Finally, note the following integral which arises frequently in computations below
\begin{align}\label{E:FourierGaussian}
\int_{-\infty}^\infty e^{-\pi x^2} e^{2\pi i x\xi}\,dx = e^{-\pi\xi^2}.
\end{align}


\section{The Leray transform of $\cs_{\beta}$}\label{S:LerayTransform}

The Lanzani-Stein machinery in \cite{LanSte14} cannot be directly applied to the Leray transform of $\cs_{\beta}$ because these models fail to be $C^{1,1}$ at $\infty$.  (In fact, they are not even $C^1$ at $\infty$; see Remark \ref{R:Non-C1}.)  In this section, we first use a single Fourier transform along with size estimates to establish the $L^2$-boundedness of $\bm{L}_{\beta}$.  However, sharpness is lost in the computation of the exact norm when oscillatory cancellation is ignored.  This is remedied in Section \ref{SS:ReparametrizingTheKernel} when certain projective automorphisms  are used to re-parametrize $\cs_{\beta}$, ultimately leading to the sharp result given in Theorem \ref{T:ModelNorm}.  

The verification that integral \eqref{E:LerayIntegralFormula} converges and defines a holomorphic function on $\Omega_{\beta}$ for all $f\in L^2(\cs_\beta,\sigma)$ is postponed to Appendix \ref{A:LbetaConvegence}.

\subsection{$L^2$-boundedness via Fourier methods and size estimates}\label{SS:L2ViaSizeEstimates}

Starting from the definition of $\cs_{\beta}$ in \eqref{D:ModelHypersurfaces}, choose defining function $\rho(\zeta) = |\zeta_1|^2 + \beta \re{\zeta_1^2}-\im(\zeta_2)$.  The pieces of the Leray kernel in \eqref{E:LerayKernel} are calculated to be
\begin{align}
\frac{1}{(2\pi i)^2} \dee\rho\wedge\dbar\dee\rho(\zeta) = \frac{1}{8\pi^2 i} d\zeta_2\wedge d\bar\zeta_1\wedge d\zeta_1,\\
\left< \partial\rho(\zeta),(\zeta - z) \right> = (\bar\zeta_1 +\beta\zeta_1)(\zeta_1 - z_1)+ \tfrac i2(\zeta_2-z_2),
\end{align}
and the Leray transform \eqref{E:LerayIntegralFormula} takes the form
\begin{equation}\label{E:LerayModelHypersurface3.1}
\bm{L}_{\beta}f(z) = \frac{1}{8\pi^2i} \int_{\cs_{\beta}} \frac{f(\zeta)\, d\zeta_2 \wedge d\bar\zeta_1 \wedge d\zeta_1}{[(\bar\zeta_1 +\beta\zeta_1)(\zeta_1 - z_1)+ \frac i2(\zeta_2-z_2)]^2}.
\end{equation}

We parametrize $\cs_{\beta}$ by $\R^3$, letting $\zeta_j = x_j+iy_j$ and $z_j = u_j+iv_j$.  The denominator of the integrand becomes $\left( C + \frac i2(x_2-u_2) \right)^2$, where 
\begin{align}
C &= \Big((1+\beta)x_1(x_1-u_1) + (1-\beta)y_1(y_1-v_1) \Big) - \frac 12(y_2 - v_2) \label{E:DefCGeneral} \\
& \hspace{10em} +i\Big((1+\beta)x_1(y_1-v_1) - (1-\beta)y_1(x_1-u_1)\Big). \notag
\end{align}
When both $\zeta,z \in \cs_{\beta}$, this simplifies to
\begin{align}
C &= \label{E:ParameterC} \frac{1+\beta}{2}(u_1-x_1)^2 + \frac{1-\beta}{2}(v_1-y_1)^2 \\
& \hspace{5em} +i\Big((1+\beta)x_1(y_1-v_1) - (1-\beta)y_1(x_1-u_1)\Big) \notag.
\end{align}
In particular, note that $\text{Re}(C) \ge 0$, with equality {\em if and only if} both $x_1=u_1$ and $y_1=v_1$.  

It can be checked that $d\zeta_2 \wedge d\bar\zeta_1 \wedge d\zeta_1 = 2i \, dx_1 \wedge dy_1 \wedge dx_2$.  Using \eqref{E:ParameterC}, we see 
\begin{align}
\eqref{E:LerayModelHypersurface3.1} &= \bm{L}_{\beta}f(u_1,v_1,u_2) \notag \\ 
&= \frac{1}{4\pi^2} \int_{\R^3} \frac{f(x_1,y_1,x_2)}{\big(C + \frac i2(x_2-u_2)\big)^2}\, dx_1 \wedge dy_1 \wedge dx_2 \notag \\
&= -\frac{1}{\pi^2} \int_{\R^2} \bigg( \int_{-\infty}^{\infty} \frac{f(x_1,y_1,x_2)}{\big((u_2-x_2)+2iC\big)^2}\, dx_2 \bigg) dx_1 \wedge dy_1 \notag \\
&= -\frac{1}{\pi^2} \int_{\R^2} F*G(u_2) \, dx_1 \wedge dy_1 \label{E:LerayRealConvolution},
\end{align}
where $F(x_2):=f(x_1,y_1,x_2)$ and $G(x_2) := \frac{1}{(x_2+2iC)^2}$.
\smallskip

We now calculate the inverse Fourier transform of the function $G$.

\begin{proposition}\label{P:InverseFourierComputation}
Let $C$ be a complex number with $\mathrm{Re}(C)>0$.  The inverse Fourier transform of $G(x) = \frac{1}{(x+2iC)^2}$ is given by
\begin{align*}
	\cf^{-1}G(\xi) := \int_{-\infty}^{\infty} G(x)e^{2\pi i x \xi} \, dx
	= \begin{dcases}
		0 & \xi \ge 0\\ \\
		4\pi^2\xi e^{4\pi\xi C} & \xi < 0.\\ 
		\end{dcases}
\end{align*}
\end{proposition}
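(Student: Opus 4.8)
The plan is to evaluate the integral by the residue theorem. First I would locate the singularity of the integrand $G(x)\,e^{2\pi i x\xi} = (x+2iC)^{-2}e^{2\pi i x\xi}$: it has a single pole of order two at $x_0 = -2iC$. Writing $\mathrm{Re}(C)>0$, the point $x_0 = -2iC$ has imaginary part $-2\,\mathrm{Re}(C) < 0$, so it lies strictly in the open lower half-plane. This sign is precisely what dictates in which half-plane the contour must be closed, and it is the only place the hypothesis $\mathrm{Re}(C)>0$ is used.

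Next I would split into cases according to the sign of $\xi$. For $\xi>0$ the factor $e^{2\pi i x\xi}$ decays as $\mathrm{Im}(x)\to+\infty$, so I would close the contour with a large semicircle $\Gamma_R^+$ in the upper half-plane. On that arc $|e^{2\pi i x\xi}|\le 1$ while $|G|=O(R^{-2})$; since the arc has length $\pi R$, its contribution is $O(R^{-1})\to 0$. No pole is enclosed, so the integral vanishes. For $\xi<0$ the exponential decays in the lower half-plane, so I would instead close in the lower half-plane with $\Gamma_R^-$ (clockwise orientation), which encircles $x_0$ and produces a factor of $-2\pi i$ times the residue.

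The residue at the double pole I would read off from the formula $\mathrm{Res}_{x_0}\big(G(x)e^{2\pi i x\xi}\big) = \tfrac{d}{dx}e^{2\pi i x\xi}\big|_{x=x_0} = 2\pi i\xi\, e^{2\pi i x_0\xi}$, and since $2\pi i\,x_0 = 2\pi i(-2iC) = 4\pi C$ this equals $2\pi i\xi\, e^{4\pi\xi C}$. Multiplying by $-2\pi i$ for the clockwise contour gives $(-2\pi i)(2\pi i\xi)e^{4\pi\xi C} = 4\pi^2\xi\, e^{4\pi\xi C}$, as claimed. Finally I would dispose of the borderline case $\xi=0$ directly: the antiderivative $-(x+2iC)^{-1}$ tends to $0$ at $\pm\infty$, so $\int_{-\infty}^{\infty}(x+2iC)^{-2}\,dx = 0$, matching the stated value.

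The only step needing genuine attention is the vanishing of the semicircular arc integrals, but I do not expect a real obstacle here: because $G$ already decays quadratically, the crude length-times-supremum estimate suffices and the full strength of Jordan's lemma is not required. The substantive content of the proof is thus entirely contained in the observation that $\mathrm{Re}(C)>0$ forces the pole into the lower half-plane, which selects the closing contour in each case.
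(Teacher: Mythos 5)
Your argument is correct and is exactly the residue computation the paper invokes: the paper's proof simply notes that $\mathrm{Re}(C)>0$ places the double pole in the lower half-plane and leaves the rest as "a residue integral calculation," which you have carried out accurately (including the correct factor $(-2\pi i)(2\pi i\xi)=4\pi^2\xi$ and the decay of the arc integrals). No discrepancies to report.
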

\begin{proof}
The condition $\text{Re}(C)>0$ means the function $H(z) := \frac{e^{2\pi i z \xi}}{(z+2iC)^{2}}$ has a single pole of order $2$ in the lower half plane.  For any $R>0$, the integral
\begin{equation*}
\int_{-R}^R \frac{e^{2\pi i x \xi}}{(x+2iC)^{2}}\,dx
\end{equation*}
can be thought of as a piece of a contour integral around a semicircle with base on the $x$-axis.

When $\xi>0$, consider such a semicircle in the upper half plane traversed counterclockwise.  The radial portion of the integral tends to $0$ as $R\to0$.  On the other hand, this function is holomorphic inside the contour.  Thus Cauchy's theorem implies $\cf^{-1}G(\xi) = 0$ for $\xi>0$.

When $\xi<0$, consider a semicircle in the lower half plane traversed clockwise.  For sufficiently large $R$, this contour encloses the pole of $H$.  As above, the radial portion of this integral tends to $0$ as $R\to0$.  Thus, Cauchy's integral formula shows
\begin{align*}
\lim_{R\to\infty} \int_{-R}^R \frac{e^{2\pi i x \xi}}{(x+2iC)^{2}}\,dx &= -2\pi i \frac{d}{dx}\Big( e^{2\pi i x \xi} \Big)\Bigg|_{x=-2iC} = 4\pi^2 \xi e^{4\pi\xi C}.
\end{align*}
\end{proof}

Equation \eqref{E:LerayRealConvolution} and Proposition \ref{P:InverseFourierComputation} now show that
\begin{equation}\label{E:U2InvFTransform}
\cf^{-1}_{u_2}\bm{L}_{\beta}f(u_1,v_1,\xi_{u_2}) 
	= \begin{dcases}
		0, & \xi_{u_2} \ge 0\\ \\
		-4 \xi_{u_2} \int_{\R^2} \cf_{u_2}^{-1}f(x_1,y_1,\xi_{u_2}) e^{4\pi\xi_{u_2} C} \,dx_1\,dy_1, & \xi_{u_{2}} < 0.\\ 
		\end{dcases}
\end{equation}
For $\xi_{u_2}<0$, equation \eqref{E:ParameterC} and the triangle inequality show
\begin{align}
\left| \cf^{-1}_{u_2}\bm{L}_{\beta}f(x_1,y_1,\xi_{u_2}) \right| &\le \int_{\R^2} \left|\cf_{u_2}^{-1}f(x_1,y_1,\xi_{u_2})\right| K(u_1-x_1,v_1-y_1,\xi_{u_2})\,dx_1 dy_1  \notag \\
&= \left|\cf_{u_2}^{-1}f(\cdot,\cdot,\xi_{u_2})\right| *_{(u_1,v_1)} K(\cdot,\cdot,\xi_{u_2}) \label{E:TriangleInqeq1},
\end{align}
where
\begin{equation*}
K(x,y,\xi_{u_2}) := -4\xi_{u_2}  \text{Exp}\left[ 2\pi\xi_{u_2}(1+\beta)x^2 + 2\pi\xi_{u_2}(1-\beta)y^2 \right].
\end{equation*}

Equation \eqref{E:TriangleInqeq1} is in convolution form because the triangle inequality lets us ignore the oscillatory piece of the exponential.  Now apply a two-dimensional inverse Fourier transform.  Making use of integral \eqref{E:FourierGaussian}, a computation shows
\begin{equation}\label{E:2DFourierTrans}
\cf^{-1}_{u_1,v_1}K(\xi_{u_1,}\xi_{v_1},\xi_{u_2}) = \frac{2}{\sqrt{1-\beta^2}}\text{Exp}\left[{\frac{\pi}{2\xi_{u_2}}}\left(\frac{\xi^2_{u_1}}{1+\beta} + \frac{\xi^2_{v_1}}{1-\beta} \right) \right],
\end{equation}
and it follows that
\begin{equation}\label{E:SupNormCalculation}
\sup \Big\{ \cf^{-1}_{u_1,v_1}K(\xi_{u_1,}\xi_{v_1},\xi_{u_2}) : \xi_{u_1}\in\R,\ \xi_{v_1}\in \R,\ \xi_{u_2}<0 \Big\} = \frac{2}{\sqrt{1-\beta^2}} .
\end{equation}
We now deduce that
\begin{multline}\label{E:LocalConvoToProduct}
\cf_{u_1,v_1}^{-1}\Big[\left|\cf_{u_2}^{-1}f(\cdot,\cdot,\xi_{u_2})\right| *_{(u_1,v_1)} K(\cdot,\cdot,\xi_{u_2}) \Big]  \\
= \cf_{u_1,v_1}^{-1}\left(\left|\cf_{u_2}^{-1}f\right|\right)(\xi_{u_1},\xi_{v_1},\xi_{u_2})  \cdot \frac{2}{\sqrt{1-\beta^2}} \,\text{Exp}\left[{\frac{\pi}{2\xi_{u_2}}}\left(\frac{\xi^2_{u_1}}{1+\beta} + \frac{\xi^2_{v_1}}{1-\beta} \right) \right]. 
\end{multline}
Equations \eqref{E:SupNormCalculation} and \eqref{E:LocalConvoToProduct} give
\begin{multline}\label{E:LocalSupNormEstimate}
\norm{\cf_{u_1,v_1}^{-1}\Big[\left|\cf_{u_2}^{-1}f(\cdot,\cdot,\xi_{u_2})\right| *_{(u_1,v_1)} K(\cdot,\cdot,\xi_{u_2}) \Big]}_{L^2(\R^3,\,\sigma)} \\
\le \frac{2}{\sqrt{1-\beta^2}} \norm{\cf_{u_1,v_1}^{-1}\Big(\left|\cf_{u_2}^{-1}f\right|\Big)(\xi_{u_1},\xi_{v_1},\xi_{u_2})}_{L^2(\R^3,\,\sigma)}.  \qquad\qquad\qquad\qquad
\end{multline}

This is now summarized to confirm item (b) from Section \ref{SS:LerayTransform}:

\begin{proposition}\label{P:L2Boundedness(NotSharp)}
The Leray transform $\bm{L}_{\beta}$ is bounded from $L^2(\cs_{\beta},\sigma) \to L^2(\cs_{\beta},\sigma)$.
\end{proposition}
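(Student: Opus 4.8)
The plan is to assemble the size estimates already in hand by applying Plancherel's identity \eqref{E:Plancherel} in stages, exploiting that under the parametrization of $\cs_{\beta}$ by $\R^3$ the measure $\sigma$ is simply Lebesgue measure in $(u_1,v_1,u_2)$, so that $L^2(\cs_{\beta},\sigma)$ is literally $L^2(\R^3)$ and no geometric subtlety intervenes. First I would fix $f$ in a convenient dense class (say $L^1\cap L^2$, or Schwartz), so that every Fourier integral below is absolutely convergent, and apply Plancherel in the single variable $u_2$ with $(u_1,v_1)$ held fixed. This rewrites $\norm{\bm{L}_{\beta}f}^2_{L^2(\cs_{\beta},\sigma)}$ as the integral over $(u_1,v_1,\xi_{u_2})$ of $\bigl|\cf^{-1}_{u_2}\bm{L}_{\beta}f\bigr|^2$. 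By \eqref{E:U2InvFTransform} the integrand vanishes for $\xi_{u_2}\ge 0$, so only the half-line $\xi_{u_2}<0$ contributes.

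On that half-line I would insert the pointwise bound \eqref{E:TriangleInqeq1}, which expresses $\bigl|\cf^{-1}_{u_2}\bm{L}_{\beta}f\bigr|$ as a convolution in $(u_1,v_1)$ against the nonnegative kernel $K$. Taking the $(u_1,v_1)$-Plancherel, converting the convolution into the product \eqref{E:LocalConvoToProduct} via \eqref{E:FourierConvolution}, and bounding the kernel factor by its supremum \eqref{E:SupNormCalculation} --- this is precisely the chain recorded in \eqref{E:LocalSupNormEstimate} --- yields, for each fixed $\xi_{u_2}<0$,
\begin{equation*}
\int_{\R^2}\bigl|\cf^{-1}_{u_2}\bm{L}_{\beta}f(u_1,v_1,\xi_{u_2})\bigr|^2\,du_1\,dv_1 \le \frac{4}{1-\beta^2}\int_{\R^2}\bigl|\cf^{-1}_{u_2}f(u_1,v_1,\xi_{u_2})\bigr|^2\,du_1\,dv_1,
\end{equation*}
where the right-hand side is obtained by one further application of Plancherel in $(u_1,v_1)$. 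It then remains to integrate in $\xi_{u_2}$ and run Plancherel in $u_2$ in reverse on the right, giving $\norm{\bm{L}_{\beta}f}_{L^2(\cs_{\beta},\sigma)}\le \tfrac{2}{\sqrt{1-\beta^2}}\,\norm{f}_{L^2(\cs_{\beta},\sigma)}$, and finally extending from the dense class to all of $L^2(\cs_{\beta},\sigma)$ by the standard density argument.

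In truth the substantive work is already done in the displays preceding the statement, so I do not expect a genuine obstacle to boundedness: every ingredient is in place. The only care required is measure-theoretic --- justifying the interchanges of integration, all legitimate by Tonelli since the relevant integrands are nonnegative after taking absolute values, and the repeated use of Plancherel, which is cleanest when first carried out on a dense class of sufficiently decaying $f$ and then extended. I would emphasize, finally, that the constant $2/\sqrt{1-\beta^2}$ produced here is deliberately \emph{not} sharp: the triangle inequality in \eqref{E:TriangleInqeq1} discards the oscillation carried by the imaginary part of $C$, and recovering the sharp norm $(1-\beta^2)^{-1/4}$ of Theorem \ref{T:ModelNorm} requires retaining that cancellation rather than estimating it away.
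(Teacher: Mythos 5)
Your proposal is correct and follows essentially the same route as the paper's proof: Plancherel in $u_2$, the vanishing of $\cf^{-1}_{u_2}\bm{L}_{\beta}f$ for $\xi_{u_2}\ge 0$, the triangle-inequality majorization \eqref{E:TriangleInqeq1} by the Gaussian kernel $K$, Plancherel in $(u_1,v_1)$ converting the convolution to a product bounded via \eqref{E:SupNormCalculation}, and Plancherel in reverse, yielding the same non-sharp constant $2/\sqrt{1-\beta^2}$. The only cosmetic differences are that you organize the estimate slicewise in $\xi_{u_2}$ and make the dense-class/Tonelli bookkeeping explicit, which the paper leaves implicit.
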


\begin{proof}
First note that from repeated application of Plancherel's identity \eqref{E:Plancherel}, we have that
\begin{align*}
\norm{f(u_1,v_1,u_2)}_{L^2(\R^3,\,\sigma)} &= \norm{\cf_{u_2}^{-1} f(u_1,v_1,\xi_{u_2})}_{L^2(\R^3,\,\sigma)} \\
&= \norm{|\cf_{u_2}^{-1} f(u_1,v_1,\xi_{u_2})|}_{L^2(\R^3,\,\sigma)} \\
&= \norm{\cf_{u_1,v_1}^{-1}(|\cf_{u_2}^{-1} f|)(\xi_{u_1},\xi_{v_1},\xi_{u_2})}_{L^2(\R^3,\,\sigma)}.
\end{align*}

Since $\cf_{u_2}^{-1}\bm{L}_{\beta}f(u_1,v_1,\xi_{u_2}) = 0$ for $\xi_{u_2}\ge 0$, we only need to consider $\xi_{u_2}<0$ in the norm computations below.  Now,
\begin{align}
\norm{\bm{L}_{\beta}f(u_1,v_1,u_2)}_{L^2(\R^3,\,\sigma)} &= \norm{\cf_{u_2}^{-1}\bm{L}_{\beta}f(u_1,v_1,\xi_{u_2})}_{L^2(\R^3,\,\sigma)} \notag \\
&\le \norm{|\cf_{u_2}^{-1}f(\cdot,\cdot,\xi_{u_2})| *_{(u_1,v_1)} K(\cdot,\cdot,\xi_{u_2})}_{L^2(\R^3,\,\sigma)} \label{E:LocalIneq1}\\
&= \norm{\cf_{u_1,v_1}^{-1}\Big[\left|\cf_{u_2}^{-1}f(\cdot,\cdot,\xi_{u_2})\right| *_{(u_1,v_1)} K(\cdot,\cdot,\xi_{u_2}) \Big]}_{L^2(\R^3,\,\sigma)} \notag \\
&\le \frac{2}{\sqrt{1-\beta^2}} \norm{\cf_{u_1,v_1}^{-1}\Big(\left|\cf_{u_2}^{-1}f\right|\Big)(\xi_{u_1},\xi_{v_1},\xi_{u_2})}_{L^2(\R^3,\,\sigma)} \label{E:LocalIneq2}\\
&= \frac{2}{\sqrt{1-\beta^2}} \norm{f(u_1,v_1,u_2)}_{L^2(\R^3,\,\sigma)}. \notag
\end{align}
This is equivalent to saying
\begin{equation}\label{E:CrudeUpperBound}
\norm{\bm{L}_{\beta}}_{L^2(\cs_{\beta},\,\sigma)} \le \frac{2}{\sqrt{1-\beta^2}}.
\end{equation}
Inequality \eqref{E:LocalIneq1} follows from \eqref{E:TriangleInqeq1} and \eqref{E:LocalIneq2} is just \eqref{E:LocalSupNormEstimate}. 
\end{proof}

\noindent {\em Note.} A version of \eqref{E:CrudeUpperBound} was shown previously to the first author by Jennifer Brooks.

\begin{remark}\label{R:SharpenNorm}
The bound on $\norm{\bm{L}_{\beta}}_2$ given in equation \eqref{E:CrudeUpperBound} is never sharp, given the validity of Theorem \ref{T:ModelNorm}.  When $\beta=0$, we can sharpen the estimate $\norm{\bm{L}_0}_2<2$ simply by observing the Leray transform is identical to the Szeg\H o projection on $\cs_0$ -- see equation \eqref{E:Szego_S_0}.  Consequently, $\norm{\bm{L}_0}_2=1$.  The operators do not coincide for  $\beta \ne 0$.  Note that while $\cs_{\beta}$ is biholomorphically equivalent to $\cs_0$ via the map $(z_1,z_2)\mapsto(z_1,z_2-i\beta z_1^2)$, they are not projectively equivalent.  C.f. Remark \ref{R:AffineTransforms}.  $\lozenge$
\end{remark}


\begin{remark}\label{R:WeylTransform}
While Proposition \ref{P:L2Boundedness(NotSharp)} is weaker than Theorem \ref{T:ModelNorm}, it is included for two reasons.  First, it is worthwhile to see that the $L^2$-boundedness of $\bm{L}_{\beta}$ follows straightforwardly from the application of a single Fourier transform \eqref{E:U2InvFTransform}, without the use of the second transform facilitated by the re-parametrization of $\cs_{\beta}$ in Section \ref{SS:ParametrizingTheModels} below.  Additionally, it is of great interest to the authors to understand how much sharpness is lost when applying the triangle inequality in \eqref{E:TriangleInqeq1} and similar situations.  For instance, is the sharp norm directly attainable from equation \eqref{E:U2InvFTransform}?  $\cs_{\beta}$ may be identified with the Heisenberg group (the boundary of the Siegel upper half space) when $\beta = 0$.  This allows for the use of the {\em Weyl transform}, which is tailored to handle the {\em twisted convolution} occurring in \eqref{E:U2InvFTransform}.  See chapter 1 of \cite{Folland89} for more information on these topics.  This machinery gives a direct way to show that $\norm{\bm{L}_0}_2=1$ (as opposed to the indirect observation in the preceding remark), but (yet again) does not apply in the $\beta \ne 0$ case. $\lozenge$
\end{remark}

\subsection{Parametrizing $\cs_{\beta}$ with projective automorphisms}\label{SS:ParametrizingTheModels}

The work in this section is inspired by what is known in the case of the Heisenberg group, which corresponds to $\cs_{0}$.   Immediately from its definition in equation \eqref{D:ModelHypersurfaces}, we see that $\cs_{\beta}$ is invariant under translations of the form $(\zeta_1,\zeta_2) \mapsto (\zeta_1, \zeta_2 + s)$, where $s$ is a real number.  We would like to find less trivial automorphisms of this hypersurface.  We seek maps $\phi:\cs_{\beta} \to \cs_{\beta}$ of the form $\phi(\zeta_1,\zeta_2) = (\zeta_1+c, \cdot)$, $c \in \C$.  The second component is now determined.

\begin{proposition}
Let $c\in\C$ and $s\in\R$.  A complex affine map preserving $\cs_{\beta}$ which translates the first coordinate $\zeta_1 \mapsto \zeta_1 +c$ must be of the form
\begin{equation}\label{E:DefinitionPhi_(c,s)}
\phi_{(c,s)}(\zeta_1,\zeta_2) = \big(\zeta_1+c, \zeta_2 + 2i(\bar c + \beta c)\zeta_1 + i\big(|c|^2+\beta\, {\mathrm{Re}}(c^2)\big)+ s \big).
\end{equation}
\end{proposition}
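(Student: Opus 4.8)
The plan is to read an automorphism of $\cs_\beta$ as (the restriction to $\cs_\beta$ of) a biholomorphic self-map, so that writing $\phi(\zeta_1,\zeta_2)=(\zeta_1+c,\psi(\zeta_1,\zeta_2))$ the second component $\psi$ is holomorphic in $(\zeta_1,\zeta_2)$. Set $q(\zeta_1):=|\zeta_1|^2+\beta\re(\zeta_1^2)$, so that $\cs_\beta$ is parametrized by $(\zeta_1,t)\in\C\times\R$ through $\zeta_2=t+iq(\zeta_1)$. The requirement $\phi(\cs_\beta)\subseteq\cs_\beta$ then becomes the single scalar identity
\[
\im\,\psi\bigl(\zeta_1,\,t+iq(\zeta_1)\bigr)=q(\zeta_1+c)\qquad\text{for all }(\zeta_1,t)\in\C\times\R.
\]
First I would record the expansion $q(\zeta_1+c)=q(\zeta_1)+2\re\bigl((\bar c+\beta c)\zeta_1\bigr)+q(c)$, which isolates exactly the terms the second component must generate.

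The first key step is to pin down the dependence of $\psi$ on $\zeta_2$. Since the right-hand side above is independent of $t$, differentiating in $t$ gives $\im(\partial_{\zeta_2}\psi)\equiv0$ along $\cs_\beta$; that is, the holomorphic function $\partial_{\zeta_2}\psi$ is real-valued on $\cs_\beta$. I would then invoke the standard fact that a real-valued $CR$ function on the connected, strongly pseudoconvex hypersurface $\cs_\beta$ is constant (it is annihilated by the $CR$ field $\bar L$ and, being real, also by $L$, hence by the transverse direction $[L,\bar L]$), together with the observation that the generic real hypersurface $\cs_\beta$ is a uniqueness set for holomorphic functions. Thus $\partial_{\zeta_2}\psi$ equals a real constant $b$, and $\psi=b\,\zeta_2+g(\zeta_1)$ for some entire $g$.

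Substituting this back and using the expansion of $q(\zeta_1+c)$ reduces the defining identity to
\[
\im\,g(\zeta_1)=(1-b)\,q(\zeta_1)+2\re\bigl((\bar c+\beta c)\zeta_1\bigr)+q(c).
\]
The left side is harmonic in $\zeta_1$, whereas $\Delta q=4\neq0$; comparing Laplacians forces $b=1$. With $b=1$ the remaining right-hand side is harmonic, hence admits a holomorphic primitive unique up to an additive real constant $s$. Exhibiting the explicit solution $g(\zeta_1)=2i(\bar c+\beta c)\zeta_1+i\,q(c)+s$ — verified via $\im\bigl(2i(\bar c+\beta c)\zeta_1\bigr)=2\re\bigl((\bar c+\beta c)\zeta_1\bigr)$ and $\im\bigl(i\,q(c)\bigr)=q(c)$ — yields precisely \eqref{E:DefinitionPhi_(c,s)}, with the real freedom in the primitive being exactly the parameter $s$.

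The step I expect to be the main obstacle is the constancy claim for $\partial_{\zeta_2}\psi$, namely justifying that a holomorphic function real on $\cs_\beta$ must reduce to a real constant. If one instead takes ``automorphism'' to mean an affine self-map of $\C^2$ (the natural reading in the projective setting developed later), this step dissolves into pure coefficient matching: writing $\psi=a_1\zeta_1+a_2\zeta_2+d$ and separating the harmonic from the non-harmonic parts of the identity immediately gives $a_2=1$, $a_1=2i(\bar c+\beta c)$ and $\im d=q(c)$. Finally I would note that the converse — every $\phi_{(c,s)}$ in \eqref{E:DefinitionPhi_(c,s)} actually carries $\cs_\beta$ onto itself — follows by running the same substitution in reverse.
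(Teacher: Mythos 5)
Your proof is correct and follows essentially the same route as the paper's: write the second component as a holomorphic function, match imaginary parts along $\cs_\beta$, and reduce to the fact that a holomorphic function that is real-valued on the connected, strongly pseudoconvex hypersurface $\cs_\beta$ must be a real constant. You make explicit two points the paper's terser proof leaves implicit --- the CR-vector-field argument ($\bar L h=0=Lh$ forces $[L,\bar L]h=0$, hence constancy) and the Laplacian comparison pinning the $\zeta_2$-coefficient to $1$ --- but the underlying argument is the same.
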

\begin{proof}
Write $\phi_{(c,s)}(\zeta_1,\zeta_2) = (\zeta_1+c,\,\phi_2(\zeta_1,\zeta_2))$.  Since the image of this map lies in $\cs_{\beta}$,
\begin{align*}
\text{Im}(\phi_2(\zeta_1,\zeta_2)) &= |\zeta_1+c \,|^2 + \beta \, \text{Re}\left((\zeta_1+c)^2\right) \\
&= |\zeta_1|^2 + |c|^2 + 2\,\text{Re}(\zeta_1\bar{c}) + \beta \, \text{Re}(\zeta_1^2) + 2\beta \, \text{Re}(\zeta_1 c) + \beta \, \text{Re}(c^2)  \\
&= \text{Im}(\zeta_2) + 2 \, \text{Re}((\bar{c}+\beta c)\zeta_1) + |c|^2 + \beta \, \text{Re}(c^2).
\end{align*}
Since $\phi_2$ is holomorphic, we must have
\begin{align*}
\phi_2(\zeta_1,\zeta_2) = \zeta_2 + 2i(\bar c + \beta c)\zeta_1 + i\big(|c|^2+\beta \, {\re}(c^2)\big)+ s(\zeta_1,\zeta_2),
\end{align*}
where $s(\zeta_1,\zeta_2)$ is a real valued function.  But this implies $s(\zeta_1,\zeta_2)=s$ is constant.
\end{proof}

We now see how these maps compose.

\begin{proposition}
Let $\phi_{(c,s)}: \cs_{\beta} \to \cs_{\beta}$ be defined as in equation \eqref{E:DefinitionPhi_(c,s)} above.  Then composition of these maps gives
\begin{equation}\label{E:AutomorphismComposition}
\phi_{(c_1,s_1)} \circ \phi_{(c_2,s_2)} = \phi_{\left(c_1+c_2, \,s_1+s_2-2 \mathrm{Im}((\bar{c}_1 + \beta c_1)c_2)\right)}.
\end{equation}
\end{proposition}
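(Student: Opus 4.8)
The plan is to verify \eqref{E:AutomorphismComposition} by direct composition, treating the two coordinates separately and tracking constants carefully. I write $(\eta_1,\eta_2) = \phi_{(c_2,s_2)}(\zeta_1,\zeta_2)$, so that $\eta_1 = \zeta_1 + c_2$ and $\eta_2 = \zeta_2 + 2i(\bar c_2 + \beta c_2)\zeta_1 + i\big(|c_2|^2 + \beta\,\mathrm{Re}(c_2^2)\big) + s_2$, and then apply $\phi_{(c_1,s_1)}$ to $(\eta_1,\eta_2)$ using the definition \eqref{E:DefinitionPhi_(c,s)}. A mild streamlining is available: since the composition translates $\zeta_1 \mapsto \zeta_1 + (c_1+c_2)$, the previous proposition already forces it to equal $\phi_{(c_1+c_2,\,s)}$ for some real $s$, so the entire task reduces to identifying $s$. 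I will nonetheless carry out the full substitution, as it simultaneously confirms the form and pins down $s$.

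The first component is immediate: $\eta_1 + c_1 = \zeta_1 + (c_1+c_2)$. For the second component I substitute and collect the terms linear in $\zeta_1$, obtaining $2i(\bar c_2 + \beta c_2)\zeta_1 + 2i(\bar c_1 + \beta c_1)\zeta_1 = 2i\big(\overline{c_1+c_2} + \beta(c_1+c_2)\big)\zeta_1$, which is exactly the coefficient required for $c = c_1+c_2$. Hence the only remaining point is to check that the accumulated additive constant equals the constant $i\big(|c_1+c_2|^2 + \beta\,\mathrm{Re}((c_1+c_2)^2)\big) + s'$ prescribed by \eqref{E:DefinitionPhi_(c,s)} with $s' = s_1 + s_2 - 2\,\mathrm{Im}\big((\bar c_1 + \beta c_1)c_2\big)$.

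This constant-term identity is the only real content, and it is where the cross term enters; I expect it to be the sole (modest) obstacle, the rest being routine. Collecting constants from the composition yields $i\big(|c_1|^2 + |c_2|^2 + \beta\,\mathrm{Re}(c_1^2) + \beta\,\mathrm{Re}(c_2^2)\big) + s_1 + s_2 + 2i(\bar c_1 + \beta c_1)c_2$. Writing $w := (\bar c_1 + \beta c_1)c_2$, I would invoke the elementary identity $2iw = 2i\,\mathrm{Re}(w) - 2\,\mathrm{Im}(w)$ to split the cross term: the real piece $-2\,\mathrm{Im}(w)$ is precisely the shift in $s'$, while the imaginary piece $2i\,\mathrm{Re}(w)$, together with the quadratic constants already present, completes the square. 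Indeed, using $\mathrm{Re}(\bar c_1 c_2) = \mathrm{Re}(c_1 \bar c_2)$ one gets $\mathrm{Re}(w) = \mathrm{Re}(c_1\bar c_2) + \beta\,\mathrm{Re}(c_1 c_2)$, so $2i\,\mathrm{Re}(w)$ supplies exactly the cross terms needed to assemble $i|c_1+c_2|^2$ and $i\beta\,\mathrm{Re}((c_1+c_2)^2)$ from the expansions $|c_1+c_2|^2 = |c_1|^2 + |c_2|^2 + 2\,\mathrm{Re}(c_1\bar c_2)$ and $\mathrm{Re}((c_1+c_2)^2) = \mathrm{Re}(c_1^2) + \mathrm{Re}(c_2^2) + 2\,\mathrm{Re}(c_1 c_2)$. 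Matching both sides then gives \eqref{E:AutomorphismComposition}.
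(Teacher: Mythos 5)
Your proposal is correct and follows essentially the same route as the paper: direct substitution of one map into the other, matching the coefficient of $\zeta_1$ in the second component, and splitting the cross term $2i(\bar c_1+\beta c_1)c_2$ into its real and imaginary parts so the constants assemble into $i\big(|c_1+c_2|^2+\beta\,\mathrm{Re}((c_1+c_2)^2)\big)$ plus the shifted $s$. The algebraic identities you invoke all check out, so nothing further is needed.
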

\begin{proof}
From equation \eqref{E:DefinitionPhi_(c,s)},
\begin{align}
\phi_{(c_1,s_1)} \circ \phi_{(c_2,s_2)}(\zeta) &= \phi_{(c_1,s_1)}\bigg(\zeta_1+c_2 \, , \zeta_2 + 2i(\bar c_2 + \beta c_2)\zeta_1 + i\big(|c_2|^2+\beta\,{\text Re}(c_2^2)\big)+ s_2 \bigg) \notag \\
&= \bigg(\zeta_1+(c_1 +c_2) \, , \zeta_2 + 2i(\bar c_2 + \beta c_2)\zeta_1 + i\big(|c_2|^2+\beta\,{\text Re}(c_2^2)\big)+ s_2 \label{E:AutoCompIntermediate1} \\ 
& \qquad \qquad \qquad + 2i(\bar{c}_1+ \beta c_1)(\zeta_1 +c_2) + i\big(|c_1|^2+\beta\,{\text Re}(c_1^2)\big)+ s_1\bigg). \notag
\end{align}
From here, the {\em second component} in equation \eqref{E:AutoCompIntermediate1} can be written as
\begin{align*}
\zeta_2 + 2i\left(\overline{c_1+c_2} +\beta(c_1+c_2)\right)\zeta_1 &+ i\left(|c_1+c_2|^2 + \beta\,\text{Re}\left((c_1+c_2)^2\right) \right) \\ &+ (s_1+s_2) -2\, \text{Im}\left((\bar{c}_1+\beta c_1)c_2\right).
\end{align*} \end{proof}

\begin{remark}\label{R:AffineTransforms}
Note that the automorphisms described above are affine maps preserving both volume and our distinguished boundary measure $\sigma = dx_1 \wedge dy_1 \wedge dx_2$.  Thus, it is unnecessary to rescale the Leray transform of $\cs_{\beta}$ when the hypersurface is re-parametrized using these maps.  Affine maps form a subgroup of the automorphisms of $\C\mathbb{P}^n$.  In \cite{Bol05}, Bolt proves a transformation law of the Leray kernel under projective automorphisms, when the kernel is expressed in terms of Fefferman hypersurface measure.  We return to this point in Appendix \ref{SS:Aut of Omega_{beta}}  $\lozenge$
\end{remark}


It will be desirable to consider an abelian subgroup of the group of the automorphisms defined by equation \eqref{E:DefinitionPhi_(c,s)}.  Notice that the term $-2 \,\text{Im}((\bar{c}_1 + \beta c_1)c_2)$ vanishes when both $c_1,c_2 \in \R$.  This immediately implies the following corollary, where we've changed all instances of $c$ to $r$ to emphasize this parameter is now restricted to real values. 

\begin{corollary}
The collection of automorphisms $\cg := \left\{ \phi_{(r,s)} : r,s\in\R \right\}$ is a closed abelian subgroup of $\text{Aut} (\cs_{\beta})$.  In fact, given two maps in this subgroup,
\begin{equation}
\phi_{(r_1,s_1)} \circ \phi_{(r_2,s_2)} = \phi_{(r_1+r_2,s_1+s_2)}.
\end{equation}
\end{corollary}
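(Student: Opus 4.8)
The plan is to derive this corollary directly from the composition law \eqref{E:AutomorphismComposition} of the preceding proposition, specializing the complex translation parameters to real values. I would set $c_1 = r_1$ and $c_2 = r_2$ with $r_1, r_2 \in \R$, so that the only thing left to verify is the vanishing of the twist term $-2\,\mathrm{Im}\!\left((\bar c_1 + \beta c_1)c_2\right)$. For real $r_1$ one has $\bar r_1 + \beta r_1 = (1+\beta)r_1 \in \R$, whence $(1+\beta)r_1 r_2$ is real and its imaginary part is $0$. Substituting into \eqref{E:AutomorphismComposition} then gives $\phi_{(r_1,s_1)}\circ\phi_{(r_2,s_2)} = \phi_{(r_1+r_2,\,s_1+s_2)}$, the displayed identity.

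From this identity the structural claims follow formally. The group law on parameters is just addition on $\R^2$, so the assignment $(r,s)\mapsto \phi_{(r,s)}$ intertwines composition in $\cg$ with the abelian group $(\R^2,+)$. In particular $\cg$ is closed under composition; the identity is $\phi_{(0,0)}$, which I would confirm equals the identity map by setting $c=s=0$ in \eqref{E:DefinitionPhi_(c,s)}; and each $\phi_{(r,s)}$ has inverse $\phi_{(-r,-s)}$, since composing the two returns $\phi_{(0,0)}$. Hence $\cg$ is a subgroup of $\mathrm{Aut}(\cs_\beta)$, and commutativity of addition gives $\phi_{(r_1,s_1)}\circ\phi_{(r_2,s_2)} = \phi_{(r_1+r_2,s_1+s_2)} = \phi_{(r_2,s_2)}\circ\phi_{(r_1,s_1)}$, so $\cg$ is abelian.

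For the word \emph{closed}, which I read as topological closedness inside $\mathrm{Aut}(\cs_\beta)$ in the topology of local uniform convergence, the cleanest route is to observe that $(r,s)\mapsto\phi_{(r,s)}$ is a continuous injective homomorphism from $(\R^2,+)$ onto $\cg$ with continuous inverse, exhibiting $\cg$ as a topological copy of $\R^2$. To upgrade this to closedness I would check that the parametrization is proper: a convergent sequence $\phi_{(r_n,s_n)}$ forces $(r_n,s_n)$ to remain bounded and hence, along a subsequence, to converge in $\R^2$ to a parameter labeling the limit by continuity. Tracking how $(r_n,s_n)$ is recovered from the action on a fixed point — the $\zeta_1$-coordinate recovers $r_n$, after which the $\zeta_2$-coordinate recovers $s_n$ — makes this properness transparent.

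I expect no genuine obstacle here: once the twist term is seen to vanish the composition identity is immediate, and the subgroup and abelian properties are purely formal. The only point meriting care is fixing the intended meaning of \emph{closed} and the topology on $\mathrm{Aut}(\cs_\beta)$; if only algebraic closure under the group operations is intended, the first two paragraphs already suffice and the third becomes unnecessary.
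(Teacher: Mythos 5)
Your proof is correct and matches the paper's argument exactly: the paper likewise observes that the twist term $-2\,\mathrm{Im}((\bar c_1+\beta c_1)c_2)$ vanishes when $c_1,c_2\in\R$ and reads the composition law off from \eqref{E:AutomorphismComposition}. Your additional verification of the group axioms and the discussion of topological closedness go beyond what the paper records, but they are sound and do not change the approach.
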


We now use the action of $\cg$ on a one-dimensional curve $\gamma$ lying in $\cs_{\beta}$ to re-parametrize this hypersurface.

\begin{theorem}\label{T:CSTParametrization}
Consider the curve $\gamma: \R \to \cs_{\beta}$ given by $\gamma(t) = \left(i t, i(1-\beta)t^2 \right)$. The action of the group $\cg = \left\{ \phi_{(r,s)} : r,s\in\R \right\}$ on the image of $\gamma$ gives a parametrization of $\cs_{\beta}$, i.e., for each $\zeta \in \cs_{\beta}$, there is a unique $(r,s,t) \in \R^3$ such that $\zeta = \phi_{(r,s)}(\gamma(t))$.
\end{theorem}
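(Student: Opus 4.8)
The plan is to convert the assertion into an explicit bijection of $\R^3$ by writing out the composite map $(r,s,t)\mapsto\phi_{(r,s)}(\gamma(t))$ in the real coordinates $(x_1,y_1,x_2)$ that were fixed to parametrize $\cs_{\beta}$. First I would record the (trivial) fact that $\gamma$ really lands in $\cs_{\beta}$: with $\zeta_1 = it$ one has $|\zeta_1|^2 + \beta\,\re(\zeta_1^2) = t^2 - \beta t^2 = (1-\beta)t^2 = \im\big(i(1-\beta)t^2\big)$, so the defining equation \eqref{D:ModelHypersurfaces} holds along $\gamma$.

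Next I would specialize formula \eqref{E:DefinitionPhi_(c,s)} to a \emph{real} first-coordinate translation $c = r$, so that $\bar c + \beta c = (1+\beta)r$ and $|c|^2 + \beta\,\re(c^2) = (1+\beta)r^2$, and apply it to $\gamma(t)$. The key to the computation is the cross term $2i(1+\beta)r\cdot(it) = -2(1+\beta)rt$, which is purely real; collecting real and imaginary parts in the second slot yields
\[
\phi_{(r,s)}(\gamma(t)) = \big(r + it,\ [s - 2(1+\beta)rt] + i[(1+\beta)r^2 + (1-\beta)t^2]\big).
\]
In the real coordinates this reads $x_1 = r$, $y_1 = t$, $x_2 = s - 2(1+\beta)rt$; moreover the imaginary part $y_2 = (1+\beta)r^2 + (1-\beta)t^2$ is exactly $|\zeta_1|^2 + \beta\,\re(\zeta_1^2)$, reconfirming that the image lies on $\cs_{\beta}$ (as it must, $\phi_{(r,s)}$ being an automorphism of $\cs_{\beta}$).

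The final step is to observe that the coordinate map $(r,s,t)\mapsto(x_1,y_1,x_2)$ just displayed is a polynomial bijection of $\R^3$: it is triangular, and its inverse is given explicitly by $r = x_1$, $t = y_1$, $s = x_2 + 2(1+\beta)x_1 y_1$. Since $\cs_{\beta}$ is parametrized by $(x_1,y_1,x_2)\in\R^3$, this produces for each $\zeta\in\cs_{\beta}$ a unique $(r,s,t)\in\R^3$ with $\zeta = \phi_{(r,s)}(\gamma(t))$, which is precisely the claim.

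I expect no genuine obstacle here; essentially all the content sits in the explicit evaluation of the second coordinate of $\phi_{(r,s)}(\gamma(t))$, and the only point requiring care is the correct collection of the purely real contributions to $\zeta_2$ (the cross term together with the constant $i(1+\beta)r^2$). Conceptually the conclusion can also be packaged group-theoretically — the $\cg$-action of \eqref{E:DefinitionPhi_(c,s)} is free, since a fixed point forces $r=0$ in the first coordinate and hence $s=0$ in the second, and $\gamma$ is a global transversal meeting each orbit exactly once — but the direct coordinate calculation above is the most transparent route and immediately exhibits both injectivity and surjectivity.
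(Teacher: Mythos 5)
Your proposal is correct and follows essentially the same route as the paper: both compute $\phi_{(r,s)}(\gamma(t)) = \left(r+it,\ s-2(1+\beta)rt + i[(1+\beta)r^2+(1-\beta)t^2]\right)$ explicitly and then observe that $(r,t)$ determines the first coordinate freely while $s$ adjusts the real part of the second. Your version merely makes the inverse map $r=x_1$, $t=y_1$, $s=x_2+2(1+\beta)x_1y_1$ explicit, which the paper leaves implicit.
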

\begin{proof}
From equation \eqref{E:DefinitionPhi_(c,s)} we can check that 
\begin{align}\label{E:CSTParametrizationDef}
\phi_{(r,s)}(\gamma(t)) = \left(r +it, \, s - 2(1+\beta)r t + i\left[(1+\beta)r^2 +(1-\beta)t^2\right]\right).
\end{align}
The first coordinate can attain any complex number by specifying the parameters $r$ and $t$.  Once these values are decided, we can appropriately choose $s$ to adjust the real part of the second coordinate.
\end{proof}

\subsection{Re-parametrizing the Leray kernel}\label{SS:ReparametrizingTheKernel}  We now make use of the automorphisms in the previous subsection.  Recall that
\begin{equation}\label{E:LerayModelHypersurface4.1}
\bm{L}_{\beta}f(z) = \frac{1}{8\pi^2i} \int_{\cs_{\beta}} \frac{f(\zeta)\, d\zeta_2 \wedge d\bar\zeta_1 \wedge d\zeta_1}{[(\bar\zeta_1 +\beta\zeta_1)(\zeta_1 - z_1)+ \frac i2(\zeta_2-z_2)]^2}.
\end{equation}
In order to circumvent the loss of sharpness on the norm coming from application of the triangle inequality, re-write \eqref{E:LerayModelHypersurface4.1} in terms of the parameterization described in Theorem \ref{T:CSTParametrization}.  For notational purposes, we use $(r_z,s_z,t_z)$ and $(r_{\zeta},s_{\zeta},t_{\zeta})$ to correspond to the respective $z$ and $\zeta$ variables.  In other words,
\begin{align*}
z &= \left(r_z +it_z, \, s_z - 2(1+\beta)r_z t_z + i\left[(1+\beta)r_z^2 +(1-\beta)t_z^2\right]\right), \\
\zeta &= \left(r_{\zeta} +it_{\zeta}, \, s_{\zeta} - 2(1+\beta)r_{\zeta} t_{\zeta} + i\left[(1+\beta)r_{\zeta}^2 +(1-\beta)t_{\zeta}^2\right]\right).
\end{align*}

The wedge product of differentials appearing in the numerator of equation \eqref{E:LerayModelHypersurface4.1} can be written as
\begin{equation}\label{E:DSDCDT}
d\zeta_2 \wedge d\bar\zeta_1 \wedge d\zeta_1 = 2i \, ds_{\zeta}\wedge dr_{\zeta}\wedge dt_{\zeta}.
\end{equation}
The major advantage of using this parametrization is seen when considering the denominator of the integrand in \eqref{E:LerayModelHypersurface4.1}.  After the change of variables,
\begin{align}
(\bar\zeta_1 +\beta\zeta_1)(\zeta_1 - z_1) &= \left((1+\beta)r_{\zeta} - i (1-\beta)t_{\zeta}\right)\left((r_{\zeta}-r_z) + i(t_{\zeta}-t_z)\right) \notag\\
&= (1+\beta)r_{\zeta}(r_{\zeta}-r_z) + (1-\beta)t_{\zeta}(t_{\zeta}-t_z) \label{E:DenomPiece1} \\
& \qquad \qquad \qquad + i\left[(1+\beta)r_{\zeta}(t_{\zeta}-t_z) - (1-\beta)t_{\zeta}(r_{\zeta}-r_z) \right], \notag
\end{align}
and
\begin{align}
\frac i2(\zeta_2-z_2) &=  -\frac12 \left[(1+\beta) (r_{\zeta}^2-r_z^2) + (1-\beta)(t_{\zeta}^2-t_z^2)\right] \label{E:DenomPiece2} \\ & \qquad \qquad \qquad +\frac i2 \left[ (s_{\zeta} - s_z) - 2(1+\beta)(r_{\zeta}t_{\zeta}-r_z t_z) \right].\notag
\end{align}
Putting the pieces together, we obtain
\begin{align}
\eqref{E:DenomPiece1} + \eqref{E:DenomPiece2} &= \frac{1+\beta}{2}(r_z-r_{\zeta})^2 + i(r_z-r_{\zeta})(t_z+t_{\zeta} + \beta(t_z-t_{\zeta})) \label{E:DenomPiece1+2} \\ & \qquad \qquad \qquad + \frac{1-\beta}{2}(t_z-t_{\zeta})^2 - \frac i2(s_z-s_{\zeta}) \notag \\ 
&:= A - \frac i2 (s_z - s_{\zeta}) \notag ,
\end{align}
where we've collected all terms not involving $(s_z - s_{\zeta})$ into the temporary label
\begin{equation}\label{D:DefLabelA}
A = \frac{1+\beta}{2}(r_z-r_{\zeta})^2 + \frac{1-\beta}{2}(t_z-t_{\zeta})^2 + i(r_z-r_{\zeta})(t_z+t_{\zeta} + \beta(t_z-t_{\zeta})).
\end{equation}

The fact that the term involving the $s$ variables appears in convolution form suggests the use of an (inverse) Fourier transform in this variable.  Note that all terms involving the $r$ variables also appear in convolution form.  But rather than performing a two-dimensional inverse Fourier transform, we make use of the computations in Section \ref{SS:L2ViaSizeEstimates}.  From \eqref{E:LerayModelHypersurface4.1}, \eqref{E:DSDCDT} and \eqref{E:DenomPiece1+2},
\begin{align}
\bm{L}_{\beta}f(r_z,s_z,t_z) &= \frac{1}{4\pi^2} \int_{\R^3} \frac{f\left(r_{\zeta},s_{\zeta},t_{\zeta}\right)}{\left(A - \frac{i}{2}(s_z-s_{\zeta}) \right)^2} \, ds_{\zeta}\wedge dr_{\zeta}\wedge dt_{\zeta}  \label{E:LerayTransAutoReParam} \\
&= -\frac{1}{\pi^2} \int_{\R^2} \left( \int_{-\infty}^{\infty} \frac{f\left(r_{\zeta},s_{\zeta},t_{\zeta}\right)}{\left((s_z-s_{\zeta})+2iA\right)^2} \, ds_{\zeta} \right)\, dr_{\zeta} \wedge dt_{\zeta} \notag \\
&= -\frac{1}{\pi^2} \int_{\R^2} F*G(s_z) \, dr_{\zeta} \wedge dt_{\zeta}, \label{E:ReParam1stConvolution}
\end{align}
where $F(s):= f(r_{\zeta},s,t_{\zeta})$ and $G(s):= \frac{1}{(s+2iA)^2}$.
\smallskip

At this point we compare \eqref{E:LerayRealConvolution} to \eqref{E:ReParam1stConvolution} and see that they are identical except that $A = \eqref{D:DefLabelA}$ and $C = \eqref{E:ParameterC}$ are different.

\begin{remark}\label{R:FourierTransformingSingularIntegral}
A glance at equation \eqref{D:DefLabelA} shows $\text{Re}(A) \ge 0$, with equality if and only if both $r_z = r_{\zeta}$ and $t_z = t_{\zeta}$. When $\text{Re}(A)=0$, $F*G$ is defined by a singular integral that should be interpreted in a principal-value sense.  This is reminiscent of the Cauchy transform and relates to topics like the Plemelj jump formula as described in \cite{Mus92}.  We can sidestep this issue, however, since $\text{Re}(A)$ vanishes only on a set of three-dimensional measure zero. $\lozenge$
\end{remark}

$\re{(A)}>0$ almost everywhere and for all such $A$, Proposition \ref{P:InverseFourierComputation} says the inverse Fourier transform of $G(s) = \frac{1}{(s+2iA)^2}$ is given by
\begin{align}\label{E:GInvFourier}
	\cf_{s}^{-1}G(\xi_s) := \int_{-\infty}^{\infty} G(s)e^{2\pi i s \xi_s} \, ds
	= \begin{dcases}
		0 & \xi_s \ge 0\\ \\
		4\pi^2\xi_s e^{4\pi\xi_s A} & \xi_s < 0.\\ 
		\end{dcases}
\end{align}

As a consequence of equations \eqref{E:Plancherel} and \eqref{E:FourierConvolution}, we are able to reduce the dimension of the integral in calculation of the $L^2$-norm of $\bm{L}_{\beta}f(r_z,s_z,t_z)$.  Starting from \eqref{E:ReParam1stConvolution}, the inverse Fourier transform in the $s$ variable yields a statement equivalent to \eqref{E:U2InvFTransform}: 
\begin{align}\label{E:InvFourier_s(L)}
\cf_s^{-1}\bm{L}_{\beta}f(r_z,\xi_s,t_z) = \begin{dcases}
		0 & \xi_s \ge 0\\ \\
		-4\, \xi_s \int_{\R^2} \cf_s^{-1} f(r_{\zeta},\xi_s,t_{\zeta})\, e^{4\pi\xi_s A} \, dr_{\zeta} \,dt_{\zeta} & \xi_s < 0.\\ 
		\end{dcases}
\end{align}

\subsection{A second inverse Fourier transform and Hilbert-Schmidt operators}\label{SS:HilbertSchmidtOperators}

The payoff to the re-parametrization in the previous subsection comes from the fact that $A = \eqref{D:DefLabelA}$ is in convolution form in the $r$ variables.  Motivated by \eqref{E:GInvFourier}, for $\xi_s<0$ we define
\begin{equation}\label{D:Define_H(r)}
H(r) := -4 \,\xi_s \,e^{2\pi\xi_s[(1+\beta)r^2+(1-\beta)(t_z-t_{\zeta})^2+2ir(t_z+t_{\zeta}+\beta(t_z-t_{\zeta}))]}.
\end{equation}
Now take the inverse Fourier transform.  Using the scaling $r \mapsto \sqrt{-2\xi_s(1+\beta)}\,r$ and integral \eqref{E:FourierGaussian}, a computation shows that for $\xi_s<0$,
\begin{align}
\cf_r^{-1} H(\xi_r) &= -4\, \xi_s\,\int_{-\infty}^{\infty}  e^{2\pi i r\,\xi_r} e^{2\pi\xi_s[(1+\beta)r^2+(1-\beta)(t_z-t_{\zeta})^2+2ir(t_z+t_{\zeta}+\beta(t_z-t_{\zeta}))]}\, dr \notag \\ 
&= \frac{2\sqrt{2}\sqrt{-\xi_s}}{\sqrt{1+\beta}}\, \text{Exp}\bigg[\frac{\pi}{2(1+\beta)\xi_s}\big(\xi_r^2 + 4(1+\beta)(\xi_s\xi_rt_z +  2\xi_s^2 t_z^2) \label{E:DefInvFourierH}  \notag \\ 
& \qquad \qquad \qquad \qquad \qquad \qquad \qquad \qquad + 4(1-\beta)(\xi_s\xi_rt_{\zeta} +  2\xi_s^2 t_{\zeta}^2)\big) \bigg] \notag \\
&= \frac{2\sqrt{2}\sqrt{-\xi_s}}{\sqrt{1+\beta}}\, \text{Exp}\bigg[\frac{\pi}{2(1+\beta)\xi_s}\bigg\{\xi_r^2 + 8(1+\beta)\xi_s^2\left(t_z+\frac{\xi_r}{4\xi_s}\right)^2 - \frac{(1+\beta)\xi_r^2}{2} \notag \\ 
& \qquad \qquad \qquad \qquad \qquad \qquad \qquad \qquad + 8(1-\beta)\xi_s^2\left(t_{\zeta}+\frac{\xi_r}{4\xi_s}\right)^2 - \frac{(1-\beta)\xi_r^2}{2}\bigg\} \bigg] \notag \\
&=  \frac{2\sqrt{2}\sqrt{-\xi_s}}{\sqrt{1+\beta}}\, \text{Exp}\left[4\pi\xi_s\left(t_z+\frac{\xi_r}{4\xi_s}\right)^2 \right] \cdot \text{Exp}\left[\frac{4\pi\xi_s(1-\beta)}{1+\beta}\left(t_{\zeta}+\frac{\xi_r}{4\xi_s}\right)^2 \right]. 
\end{align}

The key observation from \eqref{E:DefInvFourierH} is that the $t_z$ and $t_{\zeta}$ variables are decoupled, i.e., $\cf_r^{-1} H(\xi_r)$ 
breaks into a product of functions of these respective variables.  Now define 
\begin{align}\label{E:DefM_0}
m_{0,\xi_r,\xi_s}(t_z) :=  m_0(t_z)
&= \frac{2\sqrt{2}\sqrt{-\xi_s}}{\sqrt{1+\beta}}\, \text{Exp}\left[4\pi\xi_s\left(t_z+\frac{\xi_r}{4\xi_s}\right)^2 \right] \cdot \mathds{1}_{\{\xi_s <\, 0\}}
\end{align} 
and
\begin{align}\label{E:DefM_1}
m_{1,\xi_r,\xi_s}(t_{\zeta}) := m_1(t_{\zeta}) &= \text{Exp}\left[\frac{4\pi(1-\beta)\xi_s}{1+\beta}\left(t_{\zeta} + \frac{\xi_r}{4\xi_s}\right)^2 \right] \cdot \mathds{1}_{\{\xi_s < \,0\}},
\end{align}
where $\mathds{1}_{\{\xi_s < \, 0\}}$ is the indicator function of the interval $(-\infty,0)$ in the $\xi_s$ variable.  These definitions were set up so that for each $\xi_s<0$,
\begin{equation}\label{E:InvFourier_c(H)_Product}
\cf_r^{-1} H(\xi_r) = m_0(t_z)\, m_1(t_{\zeta}).
\end{equation}
Now \eqref{E:InvFourier_s(L)}, \eqref{D:Define_H(r)} and \eqref{E:InvFourier_c(H)_Product} give that

\begin{align}
\cf_r^{-1}\cf_s^{-1}\bm{L}_{\beta}f(\xi_r,\xi_s,t_z) = \begin{dcases}
		0 & \xi_s \ge 0\\ \\
		m_0(t_z) \int_{-\infty}^{\infty} m_1(t_{\zeta}) \cf_r^{-1}\cf_s^{-1} f(\xi_r,\xi_s,t_{\zeta}) \,dt_{\zeta} & \xi_s < 0.\\ 
		\end{dcases}
\end{align}

\bigskip

We finish our analysis by studying the family of operators $\sm_{\xi_r,\xi_s} := \sm$ defined by
\begin{equation}\label{E:HibertSchmidtOpM}
\sm g(t_z) := m_0(t_z) \int_{-\infty}^{\infty} m_1(t_{\zeta}) g(t_{\zeta})\,dt_{\zeta}.
\end{equation}

\begin{proposition}\label{P:MHilbertSchmidtNorm}
For each $\xi_r\in\R$, $\xi_s<0$, the operator $\sm_{\xi_r,\xi_s} = \sm$ is rank-one Hilbert-Schmidt with norm 
\begin{equation*}
\norm{\sm}_{HS} = \frac{1}{\sqrt[4]{1-\beta^2}}.
\end{equation*}
\end{proposition}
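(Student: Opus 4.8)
The plan is to compute the Hilbert--Schmidt norm of $\sm$ directly from the explicit factored kernel that has already been assembled in \eqref{E:HibertSchmidtOpM}. The operator $\sm$ acts by integrating against $m_1(t_\zeta)$ and multiplying by $m_0(t_z)$, so its integral kernel is the rank-one product $k(t_z,t_\zeta) = m_0(t_z)\,m_1(t_\zeta)$. For a rank-one operator of this form the Hilbert--Schmidt norm factors as $\norm{\sm}_{HS} = \norm{m_0}_{L^2(\R)}\,\norm{m_1}_{L^2(\R)}$, so the entire computation reduces to evaluating two Gaussian $L^2$-norms using the definitions \eqref{E:DefM_0} and \eqref{E:DefM_1}.

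First I would recall that the Hilbert--Schmidt norm of an integral operator with kernel $k(t_z,t_\zeta)$ is $\norm{\sm}_{HS}^2 = \int_\R\int_\R |k(t_z,t_\zeta)|^2\,dt_z\,dt_\zeta$, and that for $k = m_0\cdot m_1$ this splits as a product of one-dimensional integrals. The rank-one claim is immediate since the range of $\sm$ is spanned by the single function $m_0$. To compute $\norm{m_0}_2^2$, I would fix $\xi_s < 0$ and integrate $|m_0(t_z)|^2$; since $m_0(t_z)$ is (up to the prefactor $\tfrac{2\sqrt2\sqrt{-\xi_s}}{\sqrt{1+\beta}}$) a shifted real Gaussian $\Exp[4\pi\xi_s(t_z + \tfrac{\xi_r}{4\xi_s})^2]$ with $4\pi\xi_s < 0$, the standard Gaussian integral $\int_\R e^{-a u^2}\,du = \sqrt{\pi/a}$ with $a = -8\pi\xi_s$ applies after translating $u = t_z + \tfrac{\xi_r}{4\xi_s}$. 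Similarly $|m_1(t_\zeta)|^2 = \Exp\big[\tfrac{8\pi(1-\beta)\xi_s}{1+\beta}(t_\zeta + \tfrac{\xi_r}{4\xi_s})^2\big]$ is an unnormalized shifted Gaussian whose integral involves $a = -\tfrac{8\pi(1-\beta)\xi_s}{1+\beta}$.

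Carrying this out, I expect $\norm{m_0}_2^2 = \tfrac{8(-\xi_s)}{1+\beta}\cdot\sqrt{\tfrac{\pi}{-8\pi\xi_s}}$ and $\norm{m_1}_2^2 = \sqrt{\tfrac{\pi(1+\beta)}{-8\pi(1-\beta)\xi_s}}$, and the product $\norm{m_0}_2^2\,\norm{m_1}_2^2$ should collapse: the factors of $(-\xi_s)$, of $1+\beta$, and of $\pi$ cancel, leaving $\sqrt{\tfrac{1}{(1-\beta)(1+\beta)}} = \tfrac{1}{\sqrt{1-\beta^2}}$. Taking the square root gives $\norm{\sm}_{HS} = (1-\beta^2)^{-1/4}$, as claimed. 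The crucial feature making this work is precisely the decoupling observed after \eqref{E:DefInvFourierH}: because $\cf_r^{-1}H(\xi_r)$ splits as $m_0(t_z)\,m_1(t_\zeta)$, the operator is genuinely rank one and its norm depends on neither $\xi_r$ nor $\xi_s$.

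The main obstacle is bookkeeping rather than conceptual: I must track the shift $\tfrac{\xi_r}{4\xi_s}$ (which is harmless since translation preserves the $L^2$-norm), the sign conventions forcing $\xi_s<0$ so that both Gaussians are genuinely integrable, and the various algebraic prefactors so that the $\xi_s$- and $\xi_r$-dependence cancels exactly. The one genuine check worth flagging is that the $(-\xi_s)$ powers balance — $\norm{m_0}_2^2$ carries a factor $(-\xi_s)^{1/2}$ while $\norm{m_1}_2^2$ carries $(-\xi_s)^{-1/2}$ — which is what guarantees the resulting norm is independent of the frequency parameters and hence yields a uniform operator norm in the subsequent proof of Theorem \ref{T:ModelNorm}.
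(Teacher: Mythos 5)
Your proposal is correct and follows exactly the paper's own argument: the Hilbert--Schmidt norm squared is the $L^2(\R^2)$-norm of the rank-one kernel $m_0(t_z)m_1(t_\zeta)$, which factors as $\norm{m_0}_2^2\norm{m_1}_2^2$ and is evaluated by two shifted Gaussian integrals whose $\xi_s$-, $\xi_r$- and $(1+\beta)$-dependence cancels to give $1/\sqrt{1-\beta^2}$. Your intermediate values for the two Gaussian integrals match the paper's, so there is nothing to add.
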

\begin{proof}
Observe 
\begin{align*}
\norm{\sm}_{HS}^2 &= \int_{\R^2} |m_0(t_z)\, m_1(t_{\zeta})|^2\,dt_{z}dt_{\zeta} \\
&= \frac{-8\,\xi_s}{1+\beta}\, \left(\int_{-\infty}^{\infty}  \text{Exp}\left[8\pi\xi_s\left(t_z+\frac{\xi_r}{4\xi_s}\right)^2\right] \,dt_{z}\right) \\
& \qquad \qquad   \cdot \left(\int_{-\infty}^{\infty} \text{Exp}\left[\frac{8\pi(1-\beta)\xi_s}{1+\beta}\left(t_{\zeta} + \frac{\xi_r}{4\xi_s}\right)^2 \right] \,dt_{\zeta}\right)   \\
&= \frac{-8\,\xi_s}{1+\beta} \left( \frac{1}{2\sqrt{2}\sqrt{-\xi_s}}\right) \cdot \left(\frac{1}{2\sqrt{2}\sqrt{-\xi_s}}\sqrt{\frac{1+\beta}{1-\beta}} \right) \\
&=\frac{1}{\sqrt{1-\beta^2}}.
\end{align*}
Taking square roots, we are done.
\end{proof}

We are ready to prove Theorem \ref{T:ModelNorm}, but first consider the action of $\sm_{\xi_r,\xi_s} = \sm$ on a function $g_{\xi_r,\xi_s} = g \in L^2(\R)$ for a fixed pair $\xi_r \in \R,\, \xi_s <0$. Recall that the Hilbert-Schmidt norm of an operator dominates its operator norm.  Indeed, 
\begin{align}
\norm{\sm g}_{L^2(\R)}^2 &= \int_{-\infty}^{\infty} \left| m_0(t_z) \int_{-\infty}^{\infty} m_1(t_{\zeta}) g(t_{\zeta})\,dt_{\zeta} \right|^2 \, dt_{z} \notag \\
&\le \left(\int_{-\infty}^{\infty}\left|m_0(t_z)\right|^2\,dt_z\right) \left( \int_{-\infty}^{\infty}\left|m_1(t_{\zeta})\right|^2\,dt_{\zeta}\right) \left( \int_{-\infty}^{\infty}|g(t_{\zeta})|^2\, dt_{\zeta}\right) \label{L2NormOfMg} \\
&= \frac{1}{\sqrt{1-\beta^2}} \norm{g}_{L^2(\R)}^2, \notag
\end{align} 
with equality in \eqref{L2NormOfMg} holding if and only if $g$ is a multiple of $m_1$, i.e., 
\begin{equation}\label{NormMAchieved}
g_{\xi_r,\xi_s}(\cdot) = \varphi(\xi_r,\xi_s)m_{1,\xi_r,\xi_s}(\cdot).
\end{equation}

\subsection{Proof of Theorem \ref{T:ModelNorm}}\label{SS:MainProof}
The reparametrization in Section \ref{SS:ReparametrizingTheKernel} from \eqref{E:LerayModelHypersurface4.1} through \eqref{E:LerayTransAutoReParam} shows that $\norm{\bm{L}_{\beta}f}_{L^2(\cs_{\beta},\sigma)} = \norm{\bm{L}_{\beta}f(r_z,s_z,t_z)}_{L^2(\R^3,\sigma)}$.  By repeated application of Plancherel's identity \eqref{E:Plancherel}, 
\begin{align}
\norm{\bm{L}_{\beta}f(r_z,s_z,t_z)}_{L^2(\R^3,\sigma)} &= \norm{\cf_s^{-1}\bm{L}_{\beta}f(r_z,\xi_s,t_z)}_{L^2(\R^3,\sigma)} \notag \\ 
&= \norm{  \cf_r^{-1}\cf_s^{-1}\bm{L}_{\beta}f(\xi_r,\xi_s,t_z)  }_{L^2(\R^3,\sigma)} \notag \\
&= \norm{  \sm_{\xi_r,\xi_s}\left( \cf_r^{-1}\cf_s^{-1}f(\xi_r,\xi_s,\cdot)\right)(t_z) }_{L^2(\R^3,\sigma)} \notag \\
&\le \frac{1}{\sqrt[4]{1-\beta^2}} \norm{\cf_r^{-1}\cf_s^{-1}f(\xi_r,\xi_s,t_{z})}_{L^2(\R^3,\sigma)} \label{E:ModelNormIneq} \\
&= \frac{1}{\sqrt[4]{1-\beta^2}} \norm{f(r_z,s_z,t_z)}_{L^2(\R^3,\sigma)}. \notag
\end{align}
Noting that $\norm{f(r_z,s_z,t_z)}_{L^2(\R^3,\sigma)} = \norm{f}_{L^2(\cs_{\beta},\sigma)}$ shows $\frac{1}{\sqrt[4]{1-\beta^2}}$ is an upper bound.  

The norm of $\bm{L}_{\beta}$ is achieved when equality holds in \eqref{E:ModelNormIneq}.  By \eqref{NormMAchieved}, this happens if and only if we choose $f \in L^2(\cs_{\beta},\sigma)$ so that 
\begin{equation}\label{E:AchievingNormOfL_beta}
\cf_r^{-1}\cf_s^{-1}f(\xi_r,\xi_s,\cdot) = \varphi(\xi_r,\xi_s)m_{1,\xi_r,\xi_s}(\cdot).
\end{equation}
The square-integrablility of $f$ ensures $\varphi$ must satisfy
\begin{equation}
\int_{-\infty}^{\infty}\int_{-\infty}^{\,0} \frac{1}{\sqrt{-\xi_s}}|\varphi(\xi_r,\xi_s)|^2 \,d\xi_s\,d\xi_r < \infty.
\end{equation}
This completes the proof and establishes item (c) on the list given in Section \ref{SS:LerayTransform}. \qed

\subsection{The Leray transform of $\cs_{\beta}$ is a projection operator}

We establish item (d) from Section \ref{SS:LerayTransform} by showing $\bm{L}_{\beta} \circ \bm{L}_{\beta} = \bm{L}_{\beta}$. 

\begin{proposition}\label{P:MIsAProjection}
For each $\xi_r\in\R$, $\xi_s<0$, the operator $\sm_{\xi_r,\xi_s} = \sm$ defined by equation \eqref{E:HibertSchmidtOpM} is a projection.
\end{proposition}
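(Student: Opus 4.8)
The plan is to exploit the fact that $\sm_{\xi_r,\xi_s}$ is rank one, so that composing it with itself merely multiplies it by a scalar. Writing $\ell(g) := \int_{-\infty}^{\infty} m_1(t_{\zeta})\,g(t_{\zeta})\,dt_{\zeta}$ for the linear functional appearing in \eqref{E:HibertSchmidtOpM} (note this is the \emph{bilinear} pairing $\int m_1 g$, with no complex conjugate), the operator reads $\sm g = \ell(g)\,m_0$. Hence
\[
\sm(\sm g) = \ell(\sm g)\, m_0 = \ell(m_0)\,\ell(g)\, m_0 = \ell(m_0)\,\sm g,
\]
so that $\sm\circ\sm = \ell(m_0)\,\sm$. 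Thus the entire proposition reduces to the single scalar identity $\ell(m_0) = \int_{-\infty}^{\infty} m_0(t)\,m_1(t)\,dt = 1$; equivalently, the rank-one idempotent must have trace one.

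First I would substitute the formulas \eqref{E:DefM_0} and \eqref{E:DefM_1} and set $a := t + \tfrac{\xi_r}{4\xi_s}$, so that $dt = da$ and the two quadratic exponents share the common square $a^2$. Combining the exponential factors, the coefficient of $a^2$ becomes
\[
4\pi\xi_s + \frac{4\pi(1-\beta)\xi_s}{1+\beta} = \frac{8\pi\xi_s}{1+\beta},
\]
which is strictly negative because $\xi_s<0$ (this is exactly where the indicator $\mathds{1}_{\{\xi_s<0\}}$ guaranteeing convergence enters). The resulting product $m_0 m_1$ is then a constant multiple of a single Gaussian in $a$, and evaluating $\int_{-\infty}^{\infty} \text{Exp}\!\left[\tfrac{8\pi\xi_s}{1+\beta}a^2\right]da = \sqrt{\tfrac{1+\beta}{-8\xi_s}}$ against the prefactor $\tfrac{2\sqrt{2}\sqrt{-\xi_s}}{\sqrt{1+\beta}}$ coming from $m_0$ yields $\ell(m_0)=1$.

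I do not expect any serious obstacle here: the integral is precisely the Gaussian already carried out in the proof of Proposition \ref{P:MHilbertSchmidtNorm}, merely with the $L^2$ quantity $|m_0 m_1|^2$ replaced by the bilinear pairing $m_0 m_1$. The only points requiring genuine care are (i) using the unconjugated pairing from \eqref{E:HibertSchmidtOpM} rather than the Hermitian inner product, and (ii) confirming the sign $\xi_s<0$ so that the Gaussian converges. With these in place the scalar $\ell(m_0)$ comes out to exactly $1$, independently of the parameters $\xi_r$ and $\xi_s$, establishing $\sm\circ\sm=\sm$ for every admissible pair.
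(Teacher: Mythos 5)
Your argument is correct and is essentially the paper's own proof: both reduce $\sm\circ\sm=\sm$ to the scalar identity $\int_{-\infty}^{\infty} m_0(t)\,m_1(t)\,dt=1$ and verify it by the same Gaussian integral with exponent $\tfrac{8\pi\xi_s}{1+\beta}\bigl(t+\tfrac{\xi_r}{4\xi_s}\bigr)^2$. Your phrasing via the rank-one form $\sm g=\ell(g)\,m_0$ just makes the factorization in the paper's display \eqref{MProjectionStep1} explicit.
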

\begin{proof}
Let $g\in L^2(\R)$, and $m_0$ and $m_1$ be given by equations \eqref{E:DefM_0} and \eqref{E:DefM_1}, respectively.
\begin{align}
\left(\sm \circ \sm\right)(g)(t_{\eta}) &= m_0(t_{\eta})\int_{-\infty}^{\infty} m_1(t_z)\left( m_0(t_z) \int_{-\infty}^{\infty} m_1(t_{\zeta}) g(t_{\zeta})\,dt_{\zeta} \right) \, dt_z \notag \\
&= m_0(t_{\eta}) \int_{-\infty}^{\infty} m_1(t_{\zeta}) g(t_{\zeta})\,dt_{\zeta} \int_{-\infty}^{\infty} m_1(t_z) m_0(t_z)  \, dt_z. \label{MProjectionStep1}
\end{align}
A computation shows that 
\begin{align*}
\int_{-\infty}^{\infty} m_1(t_z) m_0(t_z)  \, dt_z &= \frac{2\sqrt{2}\sqrt{-\xi_s}}{\sqrt{1+\beta}}\,\int_{-\infty}^{\infty} \text{Exp}\left[\frac{ 8\pi\xi_s}{1+\beta} \left(t_z+\frac{\xi_r}{4\xi_s}\right)^2 \right]\,dt_z  \\
&= 1,
\end{align*}
and thus \eqref{MProjectionStep1} $= \sm g(t_{\eta})$.
\end{proof}

\begin{corollary}\label{C:LerayIsAProjection}
$\bm{L}_{\beta}$ is a projection operator from $L^2(\cs_{\beta},\sigma) \to L^2(\cs_{\beta},\sigma)$.
\end{corollary}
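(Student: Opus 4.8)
The plan is to transport the idempotency of the fiber operators $\sm_{\xi_r,\xi_s}$, just established in Proposition \ref{P:MIsAProjection}, up to the full operator $\bm{L}_{\beta}$ by means of the Fourier representation developed in Section \ref{SS:HilbertSchmidtOperators}. The point is that the composite transform $\mathcal{T} := \cf_r^{-1}\cf_s^{-1}$ is a unitary isomorphism of $L^2(\cs_{\beta},\sigma)$ (by Plancherel's identity \eqref{E:Plancherel}), and that conjugation by $\mathcal{T}$ diagonalizes $\bm{L}_{\beta}$ into the family $\{\sm_{\xi_r,\xi_s}\}$ acting fiberwise in the $t_z$ variable. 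Since being idempotent is preserved under conjugation by an invertible operator, it suffices to verify idempotency fiber by fiber, which is exactly the statement of Proposition \ref{P:MIsAProjection}.

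Concretely, I would introduce the fiberwise operator $\Phi$ defined by $(\Phi G)(\xi_r,\xi_s,\cdot) = \sm_{\xi_r,\xi_s}\!\left(G(\xi_r,\xi_s,\cdot)\right)$ for $\xi_s<0$ and $(\Phi G)(\xi_r,\xi_s,\cdot)=0$ for $\xi_s\ge 0$, with $\sm_{\xi_r,\xi_s}$ as in \eqref{E:HibertSchmidtOpM}. The Fourier-side formula for $\bm{L}_{\beta}$ obtained in Section \ref{SS:HilbertSchmidtOperators} reads precisely $\mathcal{T}\bm{L}_{\beta} = \Phi\,\mathcal{T}$, so that $\bm{L}_{\beta} = \mathcal{T}^{-1}\Phi\,\mathcal{T}$. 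Then
\[
\bm{L}_{\beta}\circ\bm{L}_{\beta} = \mathcal{T}^{-1}\Phi\,\mathcal{T}\,\mathcal{T}^{-1}\Phi\,\mathcal{T} = \mathcal{T}^{-1}\Phi^2\,\mathcal{T},
\]
using $\mathcal{T}\mathcal{T}^{-1}=\mathrm{Id}$. Because $\Phi$ acts independently on each fiber $(\xi_r,\xi_s)$, one has $\Phi^2 = \Phi$ as soon as $\sm_{\xi_r,\xi_s}\circ\sm_{\xi_r,\xi_s} = \sm_{\xi_r,\xi_s}$ for every $\xi_r\in\R$, $\xi_s<0$; this is the content of Proposition \ref{P:MIsAProjection}, while the $\xi_s\ge0$ fibers contribute zero and cause no trouble. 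Hence $\bm{L}_{\beta}^2 = \mathcal{T}^{-1}\Phi\,\mathcal{T} = \bm{L}_{\beta}$.

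The only genuine obstacle here is bookkeeping rather than content: one must confirm that composition commutes through $\mathcal{T}$ in the form $\mathcal{T}\bm{L}_{\beta}^2 = \Phi^2\,\mathcal{T}$, and that $\Phi^2=\Phi$ really reduces to the pointwise-in-$(\xi_r,\xi_s)$ identity $\sm^2=\sm$. This is a routine direct-integral argument within the $L^2(\R^3,\sigma)$ framework, so no delicate convergence issues arise beyond those already resolved in establishing the representation. All of the analytic substance lives in Proposition \ref{P:MIsAProjection}, whose crux is the normalization $\int_{-\infty}^{\infty} m_0(t)\,m_1(t)\,dt = 1$ that renders the rank-one fiber operators idempotent; granting that, the corollary then follows formally.
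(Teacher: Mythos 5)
Your argument is correct and is essentially the paper's own proof: the paper likewise writes $\bm{L}_{\beta}^2 = \cf_s\cf_r\circ\sm\circ\sm\circ\cf_r^{-1}\cf_s^{-1}$ by conjugating with the composite Fourier transform and then invokes Proposition \ref{P:MIsAProjection} to collapse $\sm\circ\sm$ to $\sm$. Your explicit introduction of the fiberwise operator $\Phi$ just makes the "symbol pushing" bookkeeping slightly more formal, but the content is identical.
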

\begin{proof}
After Proposition \ref{P:MIsAProjection}, this amounts to symbol pushing
\begin{align}
\bm{L}_{\beta} \circ \bm{L}_{\beta} &= \cf_s \circ \cf_r \circ \cf^{-1}_r \circ \cf^{-1}_s \circ \bm{L}_{\beta} \circ \cf_s \circ \cf_r \circ \cf^{-1}_r \circ \cf^{-1}_s \circ \bm{L}_{\beta} \notag\\
&= \cf_s \circ \cf_r \circ \left(\sm \circ \cf^{-1}_r \circ \cf^{-1}_s  \right) \circ \cf_s \circ \cf_r \circ \left(\sm \circ \cf^{-1}_r \circ \cf^{-1}_s  \right) \notag\\
&= \cf_s \circ \cf_r \circ \sm \circ \sm \circ \cf^{-1}_r \circ \cf^{-1}_s \notag\\
&= \cf_s \circ \cf_r \circ \sm  \circ \cf^{-1}_r \circ \cf^{-1}_s   \notag\\
&= \cf_s \circ \cf_r \circ \cf^{-1}_r \circ \cf^{-1}_s \circ \bm{L}_{\beta} =  \bm{L}_{\beta}. \notag
\end{align}
A deeper analysis of these operators is provided in Section \ref{SS:FactorLBeta}.
\end{proof}

\begin{remark} \label{R:Non-C1} To see that the closure of $S_\beta$ in projective space fails to be a $C^1$ hypersurface when $0<\beta<1$, apply the projective automorphism $z_1=\widetilde z_1/\widetilde z_2, z_2=1/\widetilde z_2$; then the behavior of 
\begin{equation*}
\widetilde{S_\beta}:= \left\{ (\widetilde z_1,\widetilde z_2)\in \C^2\colon -\text{Im}(\widetilde z_2) = \left|\widetilde z_1\right|^2 + \beta\,\re\left(\frac{\overline{\widetilde z_2}}{\widetilde z_2}\,\widetilde z_1^{\,2}\right)\right\}
\end{equation*}
 near the origin captures the behavior of $S_\beta$ at infinity.

Setting $\widetilde z_j=\widetilde x_j+i\widetilde y_j$, the cubic formula can be used to represent $\widetilde y_2$ as a function of 
$\left(\widetilde x_1, \widetilde y_1, \widetilde x_2\right)$. Computing with the formula one can check that $\frac{\partial\widetilde y_2}{\partial\widetilde x_2}\to 0$ along every line through the origin, whereas $\frac{\partial\widetilde y_2}{\partial\widetilde x_2}$ is a non-zero constant along the parabola $\widetilde x_2=\widetilde x_1^2, \widetilde y_2=0$; thus $\frac{\partial\widetilde y_2}{\partial\widetilde x_2}$ is discontinuous at the origin.
$\lozenge$
\end{remark}

\subsection{Higher dimensional hypersurfaces}\label{SS:HypersurfacesInC^n}
The natural generalization of $\cs_\beta$ to higher dimensions are hypersurfaces parametrized by vectors $(\beta_1,\dots,\beta_{n-1})$ as follows. Independently set each $0\le \beta_j <1$ and define
\begin{align}\label{E:HigherDimensionalModels}
\cs_{(\beta_1,\dots,\beta_{n-1})} := \left\{ (\zeta_1,\dots,\zeta_{n})\in \C^n\colon \text{Im}(\zeta_n) = \sum_{j=1}^{n-1} \left( \left|\zeta_j\right|^2 + \beta_j \re(\zeta_j^2)\right)\right\}.
\end{align}
This form is suggested by equation \eqref{E:PositiveDefiniteForm2}, as such hypersurfaces give local projective approximations to any strongly $\C$-convex hypersurface to two orders of tangency.  A hypersurface of this kind can be converted to a bounded hypersurface using a projective automorphism.  However, the point at $\infty$ is always mapped to a point which is less than $C^1$-smooth -- unless $\beta_j = 0$ for all $j$.  With the exception of this special case -- in which \eqref{E:HigherDimensionalModels} is the Heisenberg surface --  the Lanzani-Stein results in \cite{LanSte14} fail to imply the $L^2$-boundedness of the Leray transform because they require $C^{1,1}$ smoothness.

The $L^2$-boundedness of the Leray transform of \eqref{E:HigherDimensionalModels} may be seen, however, by mirroring the arguments in Section \ref{SS:L2ViaSizeEstimates}.  A sketch is now given.  As usual, define the Leray transform $\bm{L}_{(\beta_1,\dots,\beta_{n-1})} = \bm{L}$ by \eqref{E:LerayIntegralFormula}.  $\bm{L}$ resembles \eqref{E:LerayModelHypersurface3.1} and the measure $\partial\rho(\zeta) \wedge \bar \partial\partial\rho(\zeta)^{(n-1)}$ is a constant multiple of
\begin{align*}
\sigma = dx_1 \wedge dy_1 \wedge \dots \wedge dx_{n-1} \wedge dy_{n-1} \wedge dx_n. \notag
\end{align*}
Here the coordinates $(\zeta_1,\dots,\zeta_n)$ have been identified with $(x_1,y_1,\dots,x_n,y_n)$.  Parametrizing $\cs_{(\beta_1,\dots,\beta_{n-1})}$ by $\R^{2n+1}$ in this way gives the Leray transform as a one-variable convolution	as in \eqref{E:LerayRealConvolution}.  Apply the Fourier transform in this variable and use the triangle inequality to avoid the oscillatory pieces of the integral \`{a} la \eqref{E:TriangleInqeq1}.   This facilitates use of the Fourier transform in the remaining variables much like \eqref{E:2DFourierTrans}.  Following this blueprint through to \eqref{E:CrudeUpperBound} lets us deduce the following:  The Leray transform is a bounded operator on $L^2\left(\cs_{(\beta_1,\dots,\beta_{n-1})},\sigma\right)$ with
\begin{align*}
\norm{\bm{L}} \le \frac{2^{n-1}}{\sqrt{(1-\beta_1^2)(1-\beta_2^2)...(1-\beta_{n-1}^2)}}.
\end{align*}

\begin{remark}
Computing the exact norm of the Leray transform of \eqref{E:HigherDimensionalModels} in higher dimensions presents more difficulties than when $n=2$.  One may start by studying affine automorphisms of $\cs_{(\beta_1,\dots,\beta_{n-1})}$ analogous those in Section \ref{SS:ParametrizingTheModels}.  When $n=2$, the payoff is seen in Proposition \ref{T:CSTParametrization} when $\cs_{\beta}$ is re-parametrized by a (real) $2$-dimensional abelian group of automorphisms acting on a (real) $1$-dimensional curve in $\cs_\beta$.  The higher dimensional version is a re-parametrization of $\cs_{(\beta_1,\dots,\beta_{n-1})}$ by an $n$-dimensional abelian group of automorphisms acting on a $(n-1)$-dimensional surface in $\cs_{(\beta_1,\dots,\beta_{n-1})}$.  It is unclear whether this approach will produce the exact norm of the Leray transform for $n>2$.
$\lozenge$
\end{remark}

\begin{remark}
The $L^p$-regularity $(p \neq 2)$ of the Leray transform on $\cs_\beta$ and the higher dimensional hypersurfaces in \eqref{E:HigherDimensionalModels} is an interesting open question.  These hypersurfaces are less than $C^{1}$-smooth at $\infty$, so the Lanzani-Stein machinery \cite{LanSte14} is unable to imply any $L^p$-regularity.  In \cite{LanSte17b}, the same authors provide explicit examples of hypersurfaces which are $C^m$-smooth, $1<m<2$, but for which to the $L^p$-regularity of the Leray transform fails for {\em every} $1<p<\infty$.  One might hope that a Calder\'{o}n-Zygmund style approach to this problem will yield $L^p$ results, but at this point the details are unclear and likely to involve non-trivial modifications.  Nonetheless, this problem is worthy of further pursuit.
$\lozenge$
\end{remark}




\section{Projective dual $CR$ structures}\label{S:DualCR}


In this section we reinterpret the Leray transform $\bm{L}_{\cs}$ with the use of projective dual coordinates and  the projective dual $CR$ structure on a general strongly $\C$-convex hypersurface $\cs$.  The dual coordinates depend, in our presention, on the choice of a matrix $M$, but the dual $CR$ structure will be {\em independent} of that choice.  (This follows from Lemma \ref{L:ProjRel} below.)

Let
\begin{equation}\label{M-Spec}
M=\begin{pmatrix}
c_1  & a_1  &  a_2 \\
b_1  &  m_{11} &  m_{12} \\
b_2  & m_{21}  &  m_{22} 
\end{pmatrix}
\end{equation}
be an invertible 3-by-3 complex matrix.  Use $M$ to define a map $\Phi_M: \C^2\times\C^2\to\C$ by
\begin{align*}
((z_1,z_2),(w_1,w_2) )&\mapsto
\begin{pmatrix} 1 & w_1 & w_2\end{pmatrix} M \begin{pmatrix} 1 \\ z_1 \\ z_2\end{pmatrix}\\
&=c_1+a_1z_1+a_2z_2+b_1w_1+b_2w_2 +\begin{pmatrix}w_1 & w_2\end{pmatrix} 
\begin{pmatrix}
  m_{11} &  m_{12} \\
m_{21}  &  m_{22} 
\end{pmatrix}\begin{pmatrix}  z_1 \\ z_2\end{pmatrix}.
\end{align*}

Given a smooth real hypersurface $\cs\subset\C^2$ with defining function $\rho$ along with $\zeta\in \cs$, let 
\begin{subequations}\label{E:MuDef}
\begin{align}
\mu_1(\zeta)&=\frac{\dee\rho}{\dee \zeta_2}(\zeta)\\
\mu_2(\zeta)&=-\frac{\dee\rho}{\dee \zeta_1}(\zeta)
\end{align}
\end{subequations}
and note that
\begin{equation*}
L:= \mu_1(\zeta) \frac{\dee}{\dee \zeta_1} + \mu_2(\zeta) \frac{\dee}{\dee \zeta_2}
\end{equation*}
 is a non-vanishing type-(1,0) vector field tangent to $\cs$.  The (affine) complex tangent line for $\cs$ at $\zeta$ may be described parametrically by a map from $\C \to \C^2$ sending
\begin{align} \label{E:TgtLinePar}
\upsilon \mapsto \zeta+\upsilon(\mu_1(\zeta),\mu_2(\zeta)),
\end{align}
or equivalently by
\begin{equation}\label{E:TgtLineEqFromRho}
\left\{(z_1,z_2)\colon \frac{\dee\rho}{\dee \zeta_1}(\zeta)(\zeta_1-z_1)+\frac{\dee\rho}{\dee \zeta_2}(\zeta)(\zeta_2-z_2)=0\right\}.
\end{equation}

\begin{definition}\label{D:MregDef}
We say the hypersurface $\cs$  is $M$-admissible if for all $\zeta\in\cs$,
\begin{equation}\label{E:MRegDef}
\det\begin{pmatrix}
    m_{11}\mu_1(\zeta)+m_{12}\mu_2(\zeta)  & m_{11}\zeta_1+m_{12}\zeta_2+b_1 \\
       m_{21}\mu_1(\zeta)+m_{22}\mu_2(\zeta)  &  m_{21}\zeta_1+m_{22}\zeta_2+b_2
\end{pmatrix} \ne 0
\end{equation}   
\end{definition}

The motivation behind this definition comes from the following lemma.

\begin{lemma}
If $\cs$ is $M$-admissible then there are uniquely-determined functions $w_{1,M}$ and $w_{2,M}$ on $\cs$ with the property that the  complex tangent line to $\cs$ at $\zeta$ is given by 
\begin{equation}\label{E:TgtLineEq}
\left\{(z_1,z_2)\colon\Phi_M\big( (z_1,z_2), (w_{1,M}(\zeta),w_{2,M}(\zeta)) \big)=0\right\}.
\end{equation}
\end{lemma}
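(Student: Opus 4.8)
The plan is to exploit the fact that, for a fixed $w=(w_1,w_2)$, the map $z\mapsto\Phi_M\big((z_1,z_2),(w_1,w_2)\big)$ is affine in $z$. Writing $\begin{pmatrix} s_0 & s_1 & s_2\end{pmatrix}:=\begin{pmatrix}1 & w_1 & w_2\end{pmatrix}M$, one has $\Phi_M(z,w)=s_0+s_1z_1+s_2z_2$, where $s_0,s_1,s_2$ are affine functions of $(w_1,w_2)$ read off directly from \eqref{M-Spec}. Thus the zero locus appearing in \eqref{E:TgtLineEq} is a genuine affine complex line precisely when $(s_1,s_2)\neq(0,0)$, and the whole problem reduces to choosing $w$ so that this line coincides with the complex tangent line to $\cs$ at $\zeta$.

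First I would record necessary-and-sufficient conditions for that coincidence. By \eqref{E:TgtLinePar} the complex tangent line is the line through $\zeta$ with direction $(\mu_1(\zeta),\mu_2(\zeta))$, and since $\mu\neq 0$ (the field $L$ is non-vanishing) this is a genuine line. An affine line $\{s_0+s_1z_1+s_2z_2=0\}$ with $(s_1,s_2)\neq(0,0)$ equals it if and only if (a) $\zeta$ lies on it, i.e. $\Phi_M(\zeta,w)=0$, and (b) its direction is tangent, i.e. $s_1\mu_1+s_2\mu_2=0$. Substituting the expressions for $s_0,s_1,s_2$ and grouping by $w_1,w_2$ turns (b) and (a), in that order, into the linear system
\begin{equation*}
\begin{pmatrix}
m_{11}\mu_1+m_{12}\mu_2 & m_{21}\mu_1+m_{22}\mu_2\\
m_{11}\zeta_1+m_{12}\zeta_2+b_1 & m_{21}\zeta_1+m_{22}\zeta_2+b_2
\end{pmatrix}
\begin{pmatrix}w_1\\ w_2\end{pmatrix}
=-\begin{pmatrix}a_1\mu_1+a_2\mu_2\\ c_1+a_1\zeta_1+a_2\zeta_2\end{pmatrix}.
\end{equation*}
The coefficient matrix here is exactly the transpose of the matrix in the $M$-regularity condition \eqref{E:MRegDef}, so its determinant is the $M$-regularity determinant. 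Hence $M$-regularity guarantees a unique solution $(w_1,w_2)=:(w_{1,M}(\zeta),w_{2,M}(\zeta))$, smooth in $\zeta$ by Cramer's rule.

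The one point that still needs care — and the step I expect to be the main obstacle — is verifying that the line produced is actually nondegenerate, i.e. that the solution satisfies $(s_1,s_2)\neq(0,0)$; otherwise condition (b) would hold vacuously and the ``line'' could be empty or all of $\C^2$. Here I would invoke the invertibility of $M$: if $s_1=s_2=0$, then condition (a) forces $s_0=\Phi_M(\zeta,w)=0$ as well, so $\begin{pmatrix}1 & w_1 & w_2\end{pmatrix}M=0$ with $(1,w_1,w_2)\neq 0$, contradicting the invertibility assumed in \eqref{M-Spec}. Thus $(s_1,s_2)\neq(0,0)$ and the zero locus is genuinely the tangent line through $\zeta$ in direction $(\mu_1,\mu_2)$. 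Finally, uniqueness holds because conditions (a) and (b) are necessary as well as sufficient: any $w$ for which \eqref{E:TgtLineEq} cuts out the tangent line must place $\zeta$ on that line and must annihilate its direction $(\mu_1,\mu_2)$, hence must solve the same uniquely-solvable system.
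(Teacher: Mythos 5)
Your proof is correct and follows essentially the same route as the paper: both reduce the problem to the identical $2\times 2$ linear system (the paper imposes that the zero set contain the two points $\zeta$ and $\zeta+(\mu_1,\mu_2)$, while you impose that it contain $\zeta$ and annihilate the direction $(\mu_1,\mu_2)$ --- equivalent conditions), whose coefficient matrix is the transpose of the $M$-regularity matrix, so unique solvability follows from \eqref{E:MRegDef}. Your explicit verification that $(s_1,s_2)\neq(0,0)$ via the invertibility of $M$ is a careful refinement of the paper's one-line remark that the zero set is either a complex line or empty, but it does not change the substance of the argument.
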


\begin{proof} The set \eqref{E:TgtLineEq} is either a complex line or the empty set (corresponding to the projective ``line at infinity").
To prove the lemma, it suffices to check via \eqref{E:TgtLinePar} that there are uniquely-determined $w_{1,M}(\zeta),w_{2,M}(\zeta)$  so that the set \eqref{E:TgtLineEq} contains both $\zeta$ and $\zeta+(\mu_1(\zeta),\mu_2(\zeta))$; this is equivalent to the system
\begin{align*}
(m_{11}\zeta_1+m_{12}\zeta_2+b_1)w_{1,M}(\zeta)+(m_{21}\zeta_1+m_{22}\zeta_2+b_2)w_{2,M}(\zeta)&=-c_1-a_1\zeta_1-a_2\zeta_2\\
(m_{11}\mu_1(\zeta)+m_{12}\mu_2(\zeta))w_{1,M}(\zeta)+(m_{21}\mu_1(\zeta)+m_{22}\mu_2(\zeta))w_{2,M}(\zeta)&=-a_1\mu_1(\zeta)-a_2\mu_2(\zeta).
\end{align*}
Condition \eqref{E:MRegDef} guarantees that this system is uniquely solvable for $w_{1,M}(\zeta),w_{2,M}(\zeta)$.
\end{proof}

We note that since $\zeta$ belongs to the tangent line \eqref{E:TgtLineEq}
we have
\begin{equation}\label{E:ZetaInTgtLine}
\Phi_M\big( (\zeta_1,\zeta_2), (w_{1,M}(\zeta),w_{2,M}(\zeta)) \big)=0.
\end{equation}

\subsection{Examples of $M$-admissibility}\label{SS:ExamplesOfMAdmissibility}

\begin{example}
Setting $$M_1=\begin{pmatrix}
-1 & 0  &  0 \\
0  &  1 &  0 \\
0  & 0  &  1
\end{pmatrix}$$ we find that $\cs$ is $M_1$-admissible if and only if $\zeta_1\mu_2(\zeta)\ne \zeta_2\mu_1(\zeta)$ on $\cs$; using \eqref{E:TgtLinePar} we find that this is equivalent to the condition that no complex tangent line passes through the origin.

Every compact strongly $\C$-convex $\cs$ enclosing the origin is $M_1$-admissible.  See \cite{BarGru18} and Section 2.5 in \cite{ScandBook04} for further discussion.

Note also that in this case \eqref{E:TgtLineEq} reads as 
\begin{equation}\label{E:NormalizeBdd1}
z_1w_{1,M_1}(\zeta)+z_2w_{2,M_1}(\zeta)=1
\end{equation}
for $z$ in the complex tangent line.  Similarly \eqref{E:ZetaInTgtLine} becomes
\begin{equation}\label{E:NormalizeBdd2}
\zeta_1w_{1,M_1}(\zeta)+\zeta_2w_{2,M_1}(\zeta)=1.
\end{equation}
Comparing \eqref{E:NormalizeBdd1} to \eqref{E:TgtLineEqFromRho} we see that we may (and must) take
\begin{align*}
w_{1,M_1}(\zeta)&=\frac{\frac{\dee\rho}{\dee \zeta_1}(\zeta)}{\zeta_1\frac{\dee\rho}{\dee \zeta_1}(\zeta)+\zeta_2\frac{\dee\rho}{\dee \zeta_2}(\zeta)}\\
w_{2,M_1}(\zeta)&=\frac{\frac{\dee\rho}{\dee \zeta_2}(\zeta)}{\zeta_1\frac{\dee\rho}{\dee \zeta_1}(\zeta)+\zeta_2\frac{\dee\rho}{\dee \zeta_2}(\zeta)},
\end{align*}
where the non-vanishing of the denominators follows from the $M_1$-admissibility. It follows now that
\begin{equation*}
\left< \partial\rho(\zeta),(\zeta-z) \right> = \left( \zeta_1\frac{\dee\rho}{\dee \zeta_1}(\zeta)+\zeta_2\frac{\dee\rho}{\dee \zeta_2}(\zeta) \right) \left(1-z_1w_{1,M_1}(\zeta)-z_2w_{2,M_1}(\zeta)\right),
\end{equation*}
where the left hand side appears in the denominator of equation \eqref{E:LerayKernel}.  Thus, the Leray kernel 
\begin{equation*}
\mathscr{L}_{\cs}(z,\zeta)=\frac{1}{(2\pi i)^2} \frac{\partial\rho(\zeta) \wedge \bar \partial\partial\rho(\zeta)}{\left< \partial\rho(\zeta),(\zeta - z) \right>^2}
\end{equation*}
may be written as
\begin{equation*}
\frac{1}{(2\pi i)^2} \frac{ \left(\zeta_1\frac{\dee\rho}{\dee \zeta_1}(\zeta)+\zeta_2\frac{\dee\rho}{\dee \zeta_2}(\zeta)\right)^{-2}\,\partial\rho(\zeta) \wedge \bar \partial\partial\rho(\zeta)}{\left(1-z_1w_{1,M_1}(\zeta)-z_2w_{2,M_1}(\zeta)\right)^2}.
\end{equation*}
To rewrite the numerator further we note that 
\begin{equation*}
\left(\zeta_1\frac{\dee\rho}{\dee \zeta_1}(\zeta)+\zeta_2\frac{\dee\rho}{\dee \zeta_2}(\zeta)\right)^{-1}\,\dee\rho(\zeta)=w_{1,M_1}\,d\zeta_1+w_{2,M_1}\,d\zeta_2,
\end{equation*}
from which we may deduce that
\begin{align*}
\left(\zeta_1\frac{\dee\rho}{\dee \zeta_1}(\zeta)+\zeta_2\frac{\dee\rho}{\dee \zeta_2}(\zeta)\right)^{-2}\,&\partial\rho(\zeta) \wedge \bar \partial\partial\rho(\zeta)\\
&= \left(w_{1,M_1}\,d\zeta_1+w_{2,M_1}\,d\zeta_2 \right)\w\left(\deebar w_{1,M_1}\w d\zeta_1+\deebar w_{2,M_1}\w d\zeta_2 \right)\\
&= \left(w_{2,M_1}\,\deebar w_{1,M_1}-w_{1,M_1}\,\deebar w_{2,M_1}\right)\w d\zeta_1\w d\zeta_2\\
&= \left(w_{2,M_1}\,dw_{1,M_1}-w_{1,M_1}\,dw_{2,M_1}\right)\w d\zeta_1\w d\zeta_2,
\end{align*}
so that 
\begin{equation}\label{E:LerM1}
\mathscr{L}_{\cs}(z,\zeta)
= \frac{1}{(2\pi i)^2}\frac{ \left(w_{2,M_1}\,dw_{1,M_1}-w_{1,M_1}\,dw_{2,M_1}\right)\w d\zeta_1\w d\zeta_2}{\left(1-z_1w_{1,M_1}-z_2w_{2,M_1}\right)^2}.
\end{equation}

In the special case when $\cs$ is the unit sphere in $\C^2$, we have
\begin{align}
w_{1,M_1}(\zeta)&=\overline \zeta_1 \notag\\
w_{2,M_1}(\zeta)&=\overline \zeta_2 \notag\\
\mathscr{L}_{\cs}(z,\zeta)
&= \frac{1}{(2\pi i)^2} \frac{ \left(\overline \zeta_2\,d\overline \zeta_1-\overline \zeta_1\,d\overline \zeta_2\right)\w d\zeta_1\w d\zeta_2}{\left(1-z_1\overline \zeta_1-z_2\overline \zeta_2\right)^2}. \label{E:SzegoSphere}
\end{align}

We note that this formula coincides with the Szeg\H{o} kernel for the unit sphere.  \qed

\end{example}

\begin{example}\label{Ex:M2Example}
Setting $$M_2=\begin{pmatrix}
0 & 0  &  i \\
0  & 2
 &  0 \\
-i  & 0  &  0
\end{pmatrix}$$
we find that $\cs$ is $M_2$-admissible if and only if $\mu_1(\zeta)=\frac{\dee\rho}{\dee \zeta_2}(\zeta)\ne 0$ on $\cs$; equivalently, $\cs$ has no ``vertical" complex tangents.  Every $\cs$ arising as a graph of a smooth function over $\C\times\R$ is $M_2$-admissible.

In this case \eqref{E:TgtLineEq} reads as 
\begin{equation}\label{E:NormalizeGraph}
2  z_1w_{1,M_2}(\zeta)+iz_2-iw_{2,M_2}(\zeta)=0
\end{equation}
for $z$ in the complex tangent line.  Similarly, \eqref{E:ZetaInTgtLine} becomes
\begin{equation}\label{E:NormalizeGraph2}
2  \zeta_1w_{1,M_2}(\zeta)+i\zeta_2-iw_{2,M_2}(\zeta)=0.
\end{equation}

Comparing \eqref{E:NormalizeGraph} to \eqref{E:TgtLineEqFromRho} we find
\begin{subequations}\label{E:WForM2}
\begin{align}
w_{1,M_2}(\zeta)&=\frac{i}{2}\frac{\frac{\dee\rho}{\dee \zeta_1}(\zeta)}{\frac{\dee\rho}{\dee \zeta_2}(\zeta)}, \\
w_{2,M_2}(\zeta)&=\frac{\zeta_1\frac{\dee\rho}{\dee \zeta_1}(\zeta)+\zeta_2\frac{\dee\rho}{\dee \zeta_2}(\zeta)}{\frac{\dee\rho}{\dee \zeta_2}(\zeta)},
\end{align}
\end{subequations}
and
\begin{equation*}
\left< \partial\rho(\zeta),(\zeta-z) \right> 
= i\frac{\dee\rho}{\dee \zeta_2}(\zeta)\cdot
\left(  2  z_1w_{1,M_2}(\zeta)+iz_2-iw_{2,M_2}(\zeta) \right).
\end{equation*}
Thus,
\begin{equation*}
\frac{\partial\rho(\zeta) \wedge \bar \partial\partial\rho(\zeta)}{\left< \partial\rho(\zeta),(\zeta - z) \right>^2}= \frac{ -\left(\frac{\dee\rho}{\dee \zeta_2}(\zeta)\right)^{-2}\,\partial\rho(\zeta) \wedge \bar \partial\partial\rho(\zeta)}{\left(2  z_1w_{1,M_2}(\zeta)+iz_2-iw_{2,M_2}(\zeta)\right)^2}.
\end{equation*}
But note that
\begin{equation*}
\left(\frac{\dee\rho}{\dee \zeta_2}(\zeta)\right)^{-1}\,\partial\rho(\zeta) 
=\frac{2}{i}w_{1,M_2}\,d\zeta_1+d\zeta_2,
\end{equation*}
from which we obtain
\begin{align*}
\left(\frac{\dee\rho}{\dee \zeta_2}(\zeta)\right)^{-2}\,\partial\rho(\zeta) \wedge \bar \partial\partial\rho(\zeta)
&=\left( \frac{2}{i}w_{1,M_2}\,d\zeta_1+d\zeta_2\right)
\w \frac{2}{i} \, \deebar w_{1,M_2}\w d\zeta_1\\
&= -2i \,d w_{1,M_2}\w d\zeta_1\w d\zeta_2,
\end{align*}
and so
\begin{equation}\label{E:LerM2}
\mathscr{L}_{\cs}(z,\zeta)
= \frac{1}{2\pi^2i}\frac{ d w_{1,M_2}\w d\zeta_1\w d\zeta_2 }{ \left(2  z_1w_{1,M_2}(\zeta)+iz_2-iw_{2,M_2}(\zeta)\right)^2 }.
\end{equation}

In the special case when $\cs = \cs_{\beta}$ we have
\begin{align}
w_{1,M_2}(\zeta)&=\overline \zeta_1+\beta\zeta_1 \label{D:W_(1,M_2)} \\
w_{2,M_2}(\zeta)&=\frac{2}{i}\zeta_1(\overline \zeta_1+\beta\zeta_1)+\zeta_2 \label{D:W_(2,M_2)} \\
\mathscr{L}_{\beta}(z,\zeta)
&= \frac{1}{2\pi^2i} \frac{ d \overline\zeta_1\w d\zeta_1\w d\zeta_2 }{ \left(2  z_1w_{1,M_2}(\zeta)+iz_2-iw_{2,M_2}(\zeta)\right)^2 } \notag\\
&= \frac{1}{8\pi^2 i}\frac{d\zeta_2 \wedge d\bar\zeta_1 \wedge d\zeta_1}{\big((\bar\zeta_1 +\beta\zeta_1)(\zeta_1 - z_1)+ \frac i2(\zeta_2-z_2)\big)^2}, \label{E:LerayKernelS_Beta}
\end{align}
which recovers the form of the Leray kernel given in \eqref{E:LerayModelHypersurface3.1}.  When $\beta=0$, this becomes
\begin{align}
w_{1,M_2}(\zeta)&=\overline \zeta_1 \notag\\
w_{2,M_2}(\zeta)&=\frac{2}{i}\zeta_1\overline \zeta_1+\zeta_2=\overline\zeta_2 \notag\\
\mathscr{L}_{0}(z,\zeta)
&= \frac{1}{2\pi^2i} \frac{ d \overline\zeta_1\w d\zeta_1\w d\zeta_2 }{ \left(2  z_1\overline \zeta_1+iz_2-i\overline\zeta_2\right)^2 } \notag\\
&=\frac{1}{\pi ^2}\frac{ dx_1\w dy_1\w dx_2 }{ \left(2  z_1\overline \zeta_1+iz_2-i\overline\zeta_2\right)^2 }. \label{E:Szego_S_0}
\end{align}
Since this is conjugate-$CR$ with respect to $\zeta$ we find that $\mathscr{L}_{0}$ is the Szeg\H{o} kernel of $\cs_0$ with respect to the measure $dx_1\w dy_1\w dx_2$.  (See Chapter 10 in \cite{ChenShawBook}.)  \qed


\end{example}

\begin{remark}
The non-zero entries in $M_2$ are tailored specifically for use with $\cs_{\beta}$. $\lozenge$
\end{remark}

\begin{remark}\label{R:M2-M3}
Setting 
\begin{equation*}
M_3=\begin{pmatrix}
0 & i  &  0 \\
-i  & 0 &  0 \\
0  & 0  &  2
\end{pmatrix},
\end{equation*}
(or any such matrix with non-zero entries in the same slots) we find that $\cs$ is $M_3$-admissible if and only if $\cs$ has no ``horizontal" complex tangents.  In particular, each point in $\cs$ has a neighborhood that is $M_2$-admissible or $M_3$-admissible. $\lozenge$
\end{remark}

\begin{remark}\label{R:DomainIndependenceLeray}
The formulas \eqref{E:LerM1} and \eqref{E:LerM2} reveal that the Leray transform does have some form of the domain independence property from Section \ref{S:Background} if we allow the use of the dual variables.  Any two $M$-admissible hypersurfaces have identical Leray kernels in the variables $(z,w) = (z_1,z_2,w_{1,M}, w_{2,M})$.  The $w$ variables, of course, are hypersurface dependent. $\lozenge$
\end{remark}

\subsection{Universal dual coordinate description of the Leray transform}

Recall that a projective automorphism is a (partially-defined) map from $\C^2$ to $\C^2$ extending to an automorphism of projective space.  These have the form 
\begin{equation}\label{E:ProjTrans}
\Upsilon^*\colon (w_1,w_2)\mapsto\left( \frac{ (1,w_1,w_2) \Upsilon_1}{ (1,w_1,w_2) \Upsilon_0},\frac{ (1,w_1,w_2) \Upsilon_2}{ (1,w_1,w_2) \Upsilon_0}\right),
\end{equation}
where $\Upsilon_0,\Upsilon_1,\Upsilon_2$ are the columns of an invertible 3-by-3 matrix $\Upsilon$.

\begin{lemma}\label{L:ProjRel}
If $\cs$ is both $M$-admissible and $M'$-admissible, then there is a projective automorphism  $\Upsilon^*$  so that $(w_{1,M'},w_{2,M'})=\Upsilon^*(w_{1,M},w_{2,M})$.
\end{lemma}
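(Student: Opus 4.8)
The plan is to recognize the pair $\big(w_{1,M}(\zeta),w_{2,M}(\zeta)\big)$ as the affine coordinates, in an $M$-dependent affinization of the dual projective plane, of the \emph{intrinsically defined} complex tangent line to $\cs$ at $\zeta$; the passage from one affinization to another is then realized by a single projective automorphism.

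First I would unwind $\Phi_M$. For a fixed $w=(w_1,w_2)$, the equation $\Phi_M\big((z_1,z_2),(w_1,w_2)\big)=0$ is the vanishing of a degree-one polynomial $\ell_0+\ell_1z_1+\ell_2z_2$ whose coefficient row vector is exactly $(\ell_0,\ell_1,\ell_2)=(1,w_1,w_2)M$. Evaluating at $w=\big(w_{1,M}(\zeta),w_{2,M}(\zeta)\big)$ and invoking \eqref{E:TgtLineEq}, the row vector $(1,w_{1,M}(\zeta),w_{2,M}(\zeta))M$ records the homogeneous coordinates in the dual projective plane of the complex tangent line to $\cs$ at $\zeta$. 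Since that tangent line is determined by $\cs$ and $\zeta$ alone, independently of $M$, its homogeneous dual coordinates are pinned down up to a nonzero scalar; fix any representative $\lambda(\zeta)=(\lambda_0(\zeta),\lambda_1(\zeta),\lambda_2(\zeta))$.

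Next I would record the two identities
\begin{equation*}
(1,w_{1,M}(\zeta),w_{2,M}(\zeta))\,M = k_M(\zeta)\,\lambda(\zeta),\qquad (1,w_{1,M'}(\zeta),w_{2,M'}(\zeta))\,M' = k_{M'}(\zeta)\,\lambda(\zeta),
\end{equation*}
where the scalars $k_M(\zeta),k_{M'}(\zeta)$ are nonzero because each left-hand side is the product of the nonzero row $(1,w_{1,\bullet},w_{2,\bullet})$ with an invertible matrix. Eliminating $\lambda(\zeta)$ yields
\begin{equation*}
(1,w_{1,M'}(\zeta),w_{2,M'}(\zeta)) = \frac{k_{M'}(\zeta)}{k_M(\zeta)}\,(1,w_{1,M}(\zeta),w_{2,M}(\zeta))\,M(M')^{-1}.
\end{equation*}
Setting $\Upsilon:=M(M')^{-1}$, this says the rows $(1,w_{1,M},w_{2,M})$ and $(1,w_{1,M'},w_{2,M'})$ become proportional after right multiplication by $\Upsilon$; comparing first entries, the proportionality constant forces $(1,w_{1,M}(\zeta),w_{2,M}(\zeta))\Upsilon_0 = k_M(\zeta)/k_{M'}(\zeta)\ne 0$. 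Dividing the remaining two entries by this first entry is precisely the operation \eqref{E:ProjTrans} defining $\Upsilon^*$, and it gives $(w_{1,M'},w_{2,M'})=\Upsilon^*(w_{1,M},w_{2,M})$. Since $\Upsilon=M(M')^{-1}$ is a fixed invertible matrix, the same automorphism works at every $\zeta\in\cs$.

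The argument is essentially linear algebra once the homogeneous-coordinate identification is in place, so I expect the only delicate point to be the legitimacy of the dehomogenization — that the denominator $(1,w_{1,M},w_{2,M})\Upsilon_0$ never vanishes on $\cs$. This is the main (if minor) obstacle, and it is exactly the statement that the tangent-line point $\lambda(\zeta)$ avoids the hyperplane at infinity of the $M'$-affinization; it is guaranteed by $M'$-regularity (Definition \ref{D:MregDef}) through the nonvanishing of $k_{M'}(\zeta)$ noted above. Absorbing the $\zeta$-dependent scalars $k_M,k_{M'}$ into the projective (homogeneous) viewpoint is precisely what allows a single constant matrix $\Upsilon$ to do the job uniformly in $\zeta$.
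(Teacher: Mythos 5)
Your argument is correct and is essentially the paper's own proof: both rest on the observation that $(1,w_{1,M},w_{2,M})M$ and $(1,w_{1,M'},w_{2,M'})M'$ are homogeneous dual coordinates of the same tangent line, hence proportional, so that $\Upsilon=M(M')^{-1}$ works with the scalar fixed by normalizing the first entry. Your explicit intermediate vector $\lambda(\zeta)$ and the check that the denominator $(1,w_{1,M},w_{2,M})\Upsilon_0$ is nonvanishing just make explicit what the paper compresses into ``a computation shows'' and the appeal to \eqref{E:W-Transf}.
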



\begin{proof}
The hypothesis guarantees that each complex tangent line for $\cs$ may be written uniquely
as the zero set of 
$
\left( 1, w_{1,M}, w_{2,M}\right) M \left( 1, z_1, z_2\right)^T
$
or of
$
\left( 1, w_{1,M'}, w_{2,M'}\right) M' \left( 1, z_1, z_2\right)^T
$; 
 thus there is a relation of the form 
$
\left( 1, w_{1,M'}, w_{2,M'}\right) M' 
= \kappa(w_{1,M},w_{2,M}) \cdot \left( 1, w_{1,M}, w_{2,M}\right) M
$ 
or
\begin{equation}\label{E:W-Transf}
\begin{pmatrix} 1, w_{1,M'}, w_{2,M'}\end{pmatrix}  
= \kappa(w_{1,M},w_{2,M}) \cdot \begin{pmatrix} 1, w_{1,M}, w_{2,M}\end{pmatrix} M (M')^{-1}.
\end{equation}
  A computation shows that this works if we let $\Upsilon=M (M')^{-1}$ and set
   $\kappa(w_{1,M},w_{2,M})=\frac{ 1}{ (1,w_{1,M},w_{2,M}) \Upsilon_0}$.  The non-vanishing of the denominator follows from \eqref{E:W-Transf}.
\end{proof}

We now obtain the universal dual coordinate description of the Leray transform.

\begin{proposition}\label{P:GenMLeray}
If $\cs$ is an $M$-admissible strongly $\C$-convex hypersurface in $\C^2$ then the Leray integral from \eqref{E:LerayIntegralFormula} may be written as
\begin{equation}\label{E:GenMLeray}
\bm{L}_{\cs}f(z) =  \int_{\zeta\in \cs} f(\zeta)
\frac{\nu_M}
{\big((1, w_{1,M}, w_{2,M}) M (1, z_1, z_2)^T\big)^2},
\end{equation}
where
\begin{multline}\label{E:NuDef}
\nu_M:= \frac{1}{(2\pi i)^2}\Bigg(\left(  d w_{1,M}, d w_{2,M}\right) 
\begin{pmatrix} m_{11} & m_{12} \\ m_{21} & m_{22}\end{pmatrix}
\begin{pmatrix} a_2 \\ -a_1\end{pmatrix}\\
+
\begin{vmatrix} m_{11} & m_{12} \\ m_{21} & m_{22}\end{vmatrix}
\left( w_{2,M}\,d w_{1,M} - w_{1,M}\,d w_{2,M}\right)\Bigg)\w d\zeta_1\w d\zeta_2.
\end{multline}
Here $a_j$ and $ m_{jk}$ are given by \eqref{M-Spec}.
\end{proposition}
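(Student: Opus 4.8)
The plan is to carry out, for a general $M$, the same scheme used in the $M_1$ and $M_2$ computations of Section \ref{SS:ExamplesOfMRegularity}: reduce the denominator of the Leray kernel \eqref{E:LerayKernel} to the dual variables, observe that an overall scalar factor cancels between numerator and denominator, and then expand and match against \eqref{E:NuDef}. Throughout I would write $w_j=w_{j,M}(\zeta)$, set $\tilde M=\left(\begin{smallmatrix} m_{11} & m_{12}\\ m_{21} & m_{22}\end{smallmatrix}\right)$, and abbreviate the denominator of \eqref{E:GenMLeray} by $D:=(1,w_1,w_2)\,M\,(1,z_1,z_2)^T$.

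First I would rewrite $D$. Expanding via \eqref{M-Spec} and subtracting the identity \eqref{E:ZetaInTgtLine} (the same expression with $z$ replaced by $\zeta$, which vanishes), the constant term and all $w$-linear terms cancel, leaving $D=\lambda_1(z_1-\zeta_1)+\lambda_2(z_2-\zeta_2)$, where $\lambda_1:=a_1+m_{11}w_1+m_{21}w_2$ and $\lambda_2:=a_2+m_{12}w_1+m_{22}w_2$. Next I would note that the second equation of the linear system solved in the Lemma following Definition \ref{D:MregDef} is exactly $\lambda_1\mu_1(\zeta)+\lambda_2\mu_2(\zeta)=0$; by \eqref{E:MuDef} this says $(\lambda_1,\lambda_2)$ is parallel to $\big(\tfrac{\dee\rho}{\dee\zeta_1},\tfrac{\dee\rho}{\dee\zeta_2}\big)$, so there is a nowhere-vanishing function $\kappa$ on $\cs$ (nonvanishing by $M$-regularity) with $\dee\rho=\kappa\,\theta$, where $\theta:=\lambda_1\,d\zeta_1+\lambda_2\,d\zeta_2$. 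Substituting $\tfrac{\dee\rho}{\dee\zeta_j}=\kappa\lambda_j$ into the bracket yields $\left< \dee\rho(\zeta),\,\zeta-z\right>=-\kappa D$, so the denominator of \eqref{E:LerayKernel} equals $\kappa^2D^2$.

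The crux is that $\kappa$ then cancels from the numerator. Writing $\theta=\tfrac1\kappa\,\dee\rho$ and expanding gives
\[
\theta\w\deebar\theta=\Big(\tfrac1\kappa\dee\rho\Big)\w\Big(\deebar\big(\tfrac1\kappa\big)\w\dee\rho+\tfrac1\kappa\deebar\dee\rho\Big)=\tfrac{1}{\kappa^2}\,\dee\rho\w\deebar\dee\rho,
\]
since the first term contains two copies of $\dee\rho$ and so vanishes (this is the general-$M$ version of the cancellations in the two examples). Hence $\dee\rho\w\deebar\dee\rho=\kappa^2\,\theta\w\deebar\theta$, the factors $\kappa^2$ cancel, and $\mathscr{L}_{\cs}(z,\zeta)=\tfrac{1}{(2\pi i)^2}(\theta\w\deebar\theta)/D^2$.

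It then remains to expand $\theta\w\deebar\theta$ and recognize $\nu_M$. A direct computation gives $\theta\w\deebar\theta=(\lambda_2\,\deebar\lambda_1-\lambda_1\,\deebar\lambda_2)\w d\zeta_1\w d\zeta_2$; substituting the definitions of $\lambda_1,\lambda_2$ and using $\deebar w_j\w d\zeta_1\w d\zeta_2=dw_j\w d\zeta_1\w d\zeta_2$, the coefficient of $dw_1$ becomes $(m_{11}a_2-m_{12}a_1)+(\det\tilde M)\,w_2$ and that of $dw_2$ becomes $(m_{21}a_2-m_{22}a_1)-(\det\tilde M)\,w_1$. Collecting the $a$-terms as $(dw_1,dw_2)\,\tilde M\,(a_2,-a_1)^T$ and the remaining terms as $(\det\tilde M)(w_2\,dw_1-w_1\,dw_2)$ reproduces \eqref{E:NuDef} exactly, giving \eqref{E:GenMLeray}. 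I expect this last bookkeeping step to be the main obstacle: keeping the signs straight in $\lambda_2\deebar\lambda_1-\lambda_1\deebar\lambda_2$ and correctly matching the antisymmetric $\det\tilde M$ combination with the contraction $\tilde M(a_2,-a_1)^T$ appearing in \eqref{E:NuDef}.
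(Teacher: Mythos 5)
Your argument is correct, and it takes a genuinely different route from the paper. The paper's proof is local and indirect: it invokes the transformation law from Lemma \ref{L:ProjRel} with $M'=M_2$ (or $M_3$ near points where $\cs$ fails to be $M_2$-regular), expresses $\nu_M$ as $\big((1,w_{1,M},w_{2,M})\Upsilon_0\big)^2\,d\big(\tfrac{(1,w)\Upsilon_1}{(1,w)\Upsilon_0}\big)\w d\zeta_1\w d\zeta_2$ for an explicit $\Upsilon$ built from the entries of $M$, and then reduces this to \eqref{E:NuDef} by a ``routine computation.'' You instead run the scheme of the two worked examples once for general $M$: the identities $D=\lambda_1(z_1-\zeta_1)+\lambda_2(z_2-\zeta_2)$ (from \eqref{E:ZetaInTgtLine}), $\lambda_1\mu_1+\lambda_2\mu_2=0$ (the second equation of the solved linear system, which together with $\dee\rho\ne0$ and the fact that \eqref{E:TgtLineEq} is a genuine line forces $\dee\rho=\kappa\theta$ with $\kappa$ nowhere zero), the cancellation of $\kappa^2$ between $\dee\rho\w\deebar\dee\rho=\kappa^2\,\theta\w\deebar\theta$ and $\left<\dee\rho,\zeta-z\right>^2=\kappa^2D^2$, and the expansion $\theta\w\deebar\theta=(\lambda_2\,\deebar\lambda_1-\lambda_1\,\deebar\lambda_2)\w d\zeta_1\w d\zeta_2$ are all correct, and I have checked that the coefficients you obtain, $(m_{11}a_2-m_{12}a_1)+(\det\tilde M)w_2$ for $dw_1$ and $(m_{21}a_2-m_{22}a_1)-(\det\tilde M)w_1$ for $dw_2$, reproduce \eqref{E:NuDef} exactly (and specialize correctly to \eqref{E:LerM1} and \eqref{E:Nu-M2}). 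What each approach buys: the paper's route recycles the already-verified $M_2$/$M_3$ computations and simultaneously exhibits the transformation law for $\nu_M$ recorded in Remark \ref{R:NuTransA}, which is needed later; your route is self-contained, global (no patching between $M_2$- and $M_3$-regular points), and makes transparent where each term of \eqref{E:NuDef} comes from, though if you wanted Remark \ref{R:NuTransA} you would still have to derive the transformation law separately.
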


Note that from the examples in Section \ref{SS:ExamplesOfMAdmissibility}, we already have this result for the special matrices $M_1$ and $M_2$.  In particular,
\begin{equation}\label{E:Nu-M2}
\nu_{M_2}=\frac{1}{2\pi^2i} \,d w_{1,M_2}\w d\zeta_1\w d\zeta_2.
\end{equation}

\begin{proof}
The result is local.  Suppose that $\cs$ is $M_2$-admissible at a particular point.  The transformation laws from the proof of Proposition \ref{L:ProjRel} (with $M'=M_2$)  yield 
\begin{equation*}
\nu_M=\frac{1}{2\pi^2i}\big( (1,w_{1,M},w_{2,M}) \Upsilon_0 \big)^2 \, d\left( \frac{ (1,w_{1,M},w_{2,M}) \Upsilon_1}{ (1,w_{1,M},w_{2,M}) \Upsilon_0} \right)\w d\zeta_1\w d\zeta_2, 
\end{equation*}
with \[\Upsilon=\begin{pmatrix}
-i a_2  & \frac{a_1}{2}  &  i c_1 \\
-i m_{12}  & \frac{m_{11}}{2}  & i b_1  \\
-i m_{22}   &  \frac{m_{21}}{2}  &  i b_2 
\end{pmatrix}.\]  Routine computation then reduces $\nu_M$ to the form given in \eqref{E:NuDef}.  If $S$ fails to be $M_2$-admissible at some point then by Remark \ref{R:M2-M3} it will be $M_3$-admissible there and a similar computation will yield the result. 
\end{proof}

\begin{remark} \label{R:NuTransA}
Similarly, in the setting of Lemma \ref{L:ProjRel} we have the transformation law
\begin{equation*}
\nu_M=\big( (1,w_{1,M},w_{2,M}) \Upsilon_0 \big)^2 \,\nu_{M'}
\end{equation*}
with $(1,w_{1,M},w_{2,M}) \Upsilon_0$ non-vanishing.  $\lozenge$
\end{remark}

\begin{proposition}\label{P:NuNonDegen}
If $\cs$ is an $M$-admissible strongly $\C$-convex hypersurface in $\C^2$ then the form $\nu_M$ from \eqref{E:NuDef} is nowhere-vanishing as a 3-form on $\cs$.
\end{proposition}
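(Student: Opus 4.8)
The plan is to localize the claim and then peel away everything except one scalar, which turns out to be the Levi form. Since nonvanishing is a pointwise condition, I would fix $p\in\cs$. By Remark \ref{R:M2-M3}, $\cs$ is $M_2$-regular or $M_3$-regular near $p$; assume the former, the latter being symmetric under interchange of coordinates. As $\cs$ is then both $M$-regular and $M_2$-regular near $p$, the transformation law of Remark \ref{R:NuTransA} gives
\[
\nu_M=\big( (1,w_{1,M},w_{2,M}) \Upsilon_0 \big)^2\,\nu_{M_2}
\]
with the scalar factor non-vanishing. Thus it suffices to prove $\nu_{M_2}(p)\ne 0$ (and $\nu_{M_3}(p)\ne0$ in the other case).

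Next I would extract the scalar obstruction hidden in $\nu_{M_2}$. By \eqref{E:Nu-M2} we have $\nu_{M_2}=\frac{1}{2\pi^2i}\,dw_{1,M_2}\w d\zeta_1\w d\zeta_2$, and wedging with $d\zeta_1\w d\zeta_2$ annihilates the $d\zeta_1,d\zeta_2$ components of $dw_{1,M_2}$, leaving only the $d\bar\zeta_1,d\bar\zeta_2$ terms. On $\cs$ the restriction of $d\rho=\dee\rho+\deebar\rho$ vanishes, producing a single linear relation among $d\zeta_1,d\zeta_2,d\bar\zeta_1,d\bar\zeta_2$; because $M_2$-regularity says $\mu_1=\frac{\dee\rho}{\dee\zeta_2}\ne0$, the three forms $d\bar\zeta_1,d\zeta_1,d\zeta_2$ stay independent on $\cs$, so $d\bar\zeta_1\w d\zeta_1\w d\zeta_2$ is a nowhere-vanishing top form there. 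Using the relation to eliminate $d\bar\zeta_2\w d\zeta_1\w d\zeta_2$ in favor of $d\bar\zeta_1\w d\zeta_1\w d\zeta_2$, a short computation rewrites
\[
\nu_{M_2}=\frac{1}{2\pi^2 i}\cdot\frac{\bar L\,w_{1,M_2}}{\overline{\mu_1}}\;d\bar\zeta_1\w d\zeta_1\w d\zeta_2,
\]
where $\bar L=\overline{\mu_1}\,\frac{\dee}{\dee\bar\zeta_1}+\overline{\mu_2}\,\frac{\dee}{\dee\bar\zeta_2}$ is the conjugate of the tangential $(1,0)$-field $L$. Hence $\nu_{M_2}(p)\ne0$ if and only if $\bar L\,w_{1,M_2}(p)\ne0$.

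Finally I would compute $\bar L\,w_{1,M_2}$ directly. Starting from $w_{1,M_2}=\frac{i}{2}\,\frac{\dee\rho/\dee\zeta_1}{\dee\rho/\dee\zeta_2}$ from \eqref{E:WForM2} and using $\mu_1=\frac{\dee\rho}{\dee\zeta_2}$, $\mu_2=-\frac{\dee\rho}{\dee\zeta_1}$, the quotient rule collapses to
\[
\bar L\,w_{1,M_2}=\frac{i}{2}\cdot\frac{1}{\mu_1^{\,2}}\sum_{j,k=1}^{2}\frac{\dee^2\rho}{\dee\zeta_j\,\dee\bar\zeta_k}\,\mu_j\,\overline{\mu_k}.
\]
The numerator is precisely the Levi form of $\rho$ evaluated on the non-vanishing complex-tangential vector $L$. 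Since a strongly $\C$-convex hypersurface is strongly pseudoconvex (Remark \ref{R:StrongCConvex => StrongPseudoconvex}), this Levi form is definite, hence nonzero on the nonzero vector $L$, so $\bar L\,w_{1,M_2}\ne0$ and therefore $\nu_{M_2}(p)\ne0$.

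I expect the crux to be the middle step: carrying out the reduction cleanly with the relation $d\rho=0$ on $\cs$ and confirming that $d\bar\zeta_1\w d\zeta_1\w d\zeta_2$ is nowhere vanishing, together with recognizing the emergent numerator in the last step as the Levi form. Once that identification is in hand, strong pseudoconvexity closes the argument immediately, and the reductions in the first step guarantee the conclusion for the original $M$.
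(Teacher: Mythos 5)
Your proof is correct and follows essentially the same route as the paper: reduce to $M_2$ (or $M_3$) via Remarks \ref{R:M2-M3} and \ref{R:NuTransA}, then show the resulting scalar is controlled by strong pseudoconvexity. The only difference is packaging — the paper wedges $\nu_{M_2}$ with $d\rho$ and exhibits the bordered Hessian determinant, whereas you factor against $d\bar\zeta_1\w d\zeta_1\w d\zeta_2$ on $\cs$ and identify the coefficient as the Levi form evaluated at $L$; these are the same quantity up to a non-vanishing factor, and your version also yields $\overline L w_{1,M_2}\ne 0$ directly (the content of Lemma \ref{L:DualDiffeo}).
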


\begin{proof}
In view of Remarks \ref{R:M2-M3} and \ref{R:NuTransA} it suffices to consider the case $M=M_2$ (or the similar case $M=M_3$).  The claim is equivalent to the non-vanishing of $d\rho\w \nu_{M_2}$ along $\cs$.

Recalling \eqref{E:Nu-M2} and \eqref{E:WForM2} (and using subscripts to denote derivatives) we have
\begin{align}
d\rho\w \nu_{M_2}
&= \frac{1}{4\pi^2}\left(\rho_{\bar\zeta_1}\,d\bar\zeta_1 + \rho_{\bar\zeta_2}\,d\bar\zeta_2\right)\w
d\left(  \frac{\rho_{\zeta_1}}{\rho_{\zeta_2}}\right)\w
d\zeta_1\w d\zeta_2 \notag\\
&= \frac{1}{4\pi^2}\rho_{\zeta_2}^{-2}\left(\rho_{\bar\zeta_1}\,d\bar\zeta_1 + \rho_{\bar\zeta_2}\,d\bar\zeta_2\right)\notag\\
&\qquad
\w\Big(\rho_{\zeta_2}\left(\rho_{\zeta_1\bar\zeta_1}\,d\bar\zeta_1+ \rho_{\zeta_1\bar\zeta_2}\,d\bar\zeta_2 \right)
- \rho_{\zeta_1}\left(\rho_{\zeta_2\bar\zeta_1}\,d\bar\zeta_1+ \rho_{\zeta_2\bar\zeta_2}\,d\bar\zeta_2\right)     
\Big)\w d\zeta_1\w d\zeta_2 \notag\\
&=\frac{1}{4\pi^2}\rho_{\zeta_2}^{-2} \det\begin{pmatrix}
 0 &   \rho_{\bar \zeta_1} & \rho_{\bar \zeta_2}  \\
\rho_{ \zeta_1}  & \rho_{\zeta_1 \bar \zeta_1}  & \rho_{\zeta_1 \bar \zeta_2}  \\
\rho_{ \zeta_2}  &   \rho_{\zeta_2 \bar \zeta_1}  & \rho_{\zeta_2 \bar \zeta_2}   
\end{pmatrix}\,d\zeta_1\w d\zeta_2\w d\bar\zeta_1\w d\bar\zeta_2. \label{E:PhaseFunction}
\end{align}
Since $\cs$ is strongly $\C$-convex, it is also strongly pseudoconvex -- see Remark \ref{R:StrongCConvex => StrongPseudoconvex} -- and strong pseudoconvexity is well-known to be equivalent to the negativity of the determinant above. This establishes the claim.
\end{proof}

\begin{remark}\label{R:NuTransB}
In view of Proposition \ref{P:NuNonDegen}, we write $\nu_M$ in the form $\ph{\nu_M} \cdot|\nu_M|$ where $\ph{\nu_M}$ is a unimodular scalar function on $\cs$ (the {\em phase function} for $\nu_M$) and $|\nu_M|$ is a positive 3-form on $\cs$.  (We may view $|\nu_M|$  as a measure on $\cs$ that is a smooth positive multiple of the surface area measure.)

From Remark \ref{R:NuTransA} we have
\begin{align*}
|\nu_M|&=\big| (1,w_{1,M},w_{2,M}) \Upsilon_0 \big|^2 \,|\nu_{M'}|,\\
\ph{\nu_M} &= \frac{\big( (1,w_{1,M},w_{2,M}) \Upsilon_0 \big)^2 }{\big| (1,w_{1,M},w_{2,M}) 
\Upsilon_0 \big|^2}  \ph{\nu_{M'}}.
\end{align*}

For future reference we note that if $\cs$ is $M_2$-admissible with defining function $\rho$ that is independent of $\re(\zeta_2)$ then $\rho_{ \zeta_2}$ is purely imaginary and non-vanishing and thus $\cs$ may be written locally in the form $\text{Im} (\zeta_2) = \lambda(\zeta_1, \bar\zeta_1)$ (and thus $\cs$ is {\em (locally) rigid} in the sense of Baouendi, Rothschild and Tr\`eves -- see \cite{BaoRothTre1985}). From \eqref{E:PhaseFunction} we see that in this situation the 4-form $d\rho\w\nu_{M_2} $ is positive along $\cs$; equivalently, $\nu_{M_2}$ is positive as a 3-form on $\cs$\, and hence  $|\nu_M|=\nu_M$, $\ph{\nu_M}=1$. $\lozenge$
\end{remark}

\subsection{Projective dual $CR$-structures}\label{SS:ProjDualCR}
Recall the non-vanishing type-$(1,0)$ tangent vector field
\begin{equation}\label{E:LDef}
L = \mu_1(\zeta) \frac{\dee}{\dee \zeta_1} + \mu_2(\zeta) \frac{\dee}{\dee \zeta_2}.
\end{equation}

\begin{lemma}\label{L:DualDiffeo}
$\overline L w_{1,M}$ and $\overline L w_{2,M}$ do not vanish simultaneously.
\end{lemma}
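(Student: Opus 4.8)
The plan is to reduce the statement to the two explicit matrices $M_2$ and $M_3$ of Section \ref{SS:ExamplesOfMRegularity}, and then recognize the resulting expression as a nonzero multiple of the Levi determinant already computed in the proof of Proposition \ref{P:NuNonDegen}. First I would observe that the conclusion does not depend on the choice of $M$. Indeed, at a point where $\cs$ is $M$-regular, Remark \ref{R:M2-M3} guarantees it is also $M_2$-regular or $M_3$-regular, so by Lemma \ref{L:ProjRel} the dual coordinates are related by $(w_{1,M'},w_{2,M'})=\Upsilon^*(w_{1,M},w_{2,M})$ with $\Upsilon^*$ a projective automorphism that is holomorphic in the $w$-variables where it is defined (and it is defined here, since $(1,w_{1,M},w_{2,M})\Upsilon_0$ is non-vanishing). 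Because $\overline L$ is a type-$(0,1)$ field and each component of $\Upsilon^*$ is holomorphic, the chain rule kills the antiholomorphic contributions and gives
\begin{equation*}
\begin{pmatrix}\overline L w_{1,M'}\\ \overline L w_{2,M'}\end{pmatrix}
= J_{\Upsilon^*}\begin{pmatrix}\overline L w_{1,M}\\ \overline L w_{2,M}\end{pmatrix},
\end{equation*}
where $J_{\Upsilon^*}$ is the holomorphic Jacobian of $\Upsilon^*$. Since $\Upsilon^*$ is a local biholomorphism, $J_{\Upsilon^*}$ is invertible, so the pair $(\overline L w_{1,M},\overline L w_{2,M})$ vanishes at a point precisely when $(\overline L w_{1,M'},\overline L w_{2,M'})$ does. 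Hence it suffices to treat $M=M_2$, the case $M=M_3$ being entirely symmetric.

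For $M=M_2$ I would in fact establish the stronger fact that $\overline L w_{1,M_2}$ never vanishes. Writing $\overline L = \overline{\mu_1}\,\frac{\dee}{\dee\bar\zeta_1}+\overline{\mu_2}\,\frac{\dee}{\dee\bar\zeta_2} = \rho_{\bar\zeta_2}\,\frac{\dee}{\dee\bar\zeta_1}-\rho_{\bar\zeta_1}\,\frac{\dee}{\dee\bar\zeta_2}$ and using $w_{1,M_2}=\tfrac{i}{2}\rho_{\zeta_1}/\rho_{\zeta_2}$ from \eqref{E:WForM2}, a quotient-rule computation collapses to
\begin{equation*}
\overline L w_{1,M_2}
= -\frac{i}{2}\,\rho_{\zeta_2}^{-2}\,
\det\begin{pmatrix}
0 & \rho_{\bar\zeta_1} & \rho_{\bar\zeta_2}\\
\rho_{\zeta_1} & \rho_{\zeta_1\bar\zeta_1} & \rho_{\zeta_1\bar\zeta_2}\\
\rho_{\zeta_2} & \rho_{\zeta_2\bar\zeta_1} & \rho_{\zeta_2\bar\zeta_2}
\end{pmatrix}.
\end{equation*}
This is precisely the determinant appearing in the proof of Proposition \ref{P:NuNonDegen}, which is strictly negative because $\cs$ is strongly $\C$-convex, hence strongly pseudoconvex. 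Moreover $\rho_{\zeta_2}=\mu_1\ne 0$ by $M_2$-regularity (Example \ref{Ex:M2Example}). Therefore $\overline L w_{1,M_2}\ne 0$ everywhere, so a fortiori the pair $(\overline L w_{1,M_2},\overline L w_{2,M_2})$ cannot vanish simultaneously; transferring back through the $M$-independence of the first paragraph completes the argument.

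The only genuinely delicate point is the transformation step: one must verify that the antiholomorphic derivative transforms tensorially under the projective change of dual coordinates, that is, that the $\dee/\dee\bar w_k$ terms drop out. This is exactly where holomorphicity of $\Upsilon^*$ in the $w$-variables is essential, and it is what makes simultaneous vanishing a well-defined, projectively invariant notion (consistent with the $M$-independence of the dual $CR$ structure noted before Lemma \ref{L:ProjRel}). The remaining identification of the numerator with the Levi determinant is routine bookkeeping, and the non-vanishing is then handed to us by strong pseudoconvexity just as in Proposition \ref{P:NuNonDegen}.
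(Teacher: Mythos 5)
Your proof is correct and follows essentially the same route as the paper: reduce to $M_2$ (or $M_3$) via Remark \ref{R:M2-M3} and Lemma \ref{L:ProjRel}, then show the stronger fact that $\overline L w_{1,M_2}$ never vanishes by tracing it back to the bordered Hessian determinant whose negativity is strong pseudoconvexity. The only cosmetic difference is that the paper deduces $\overline L w_{1,M_2}\ne 0$ from the non-vanishing of the 3-form $dw_{1,M_2}\w d\zeta_1\w d\zeta_2$ (i.e.\ by citing Proposition \ref{P:NuNonDegen} directly), whereas you re-derive the same determinant by a quotient-rule computation; both rest on the identical underlying identity, and your explicit tensorial transformation law for $(\overline L w_{1,M},\overline L w_{2,M})$ under $\Upsilon^*$ is the correct way to make the paper's terse final sentence precise.
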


\begin{proof}
If $\cs$ is $M_2$-admissible then from \eqref{E:Nu-M2} and Proposition \ref{P:NuNonDegen}
we have $d w_{1,M_2}\w d\zeta_1\w d\zeta_2\ne 0$ as a 3-form on $\cs$.  It follows that $d w_{1,M_2}$ fails to be $\C$-linear on the maximal complex subspace at any point of  $\cs$, that is, 
$\overline L w_{1,M_2}\ne 0$.  A similar argument works if $\cs$ is $M_3$-admissible.  The general case follows now from Remark \ref{R:M2-M3} and Lemma \ref{L:ProjRel}.
\end{proof}

\begin{lemma}\label{L:EtaSetup}
There is a uniquely (and locally) determined smooth function $\eta$ on $\cs$ so that if we set
\begin{align*}
L_{\sf dual}&=\overline L - \overline\eta L \\
\overline L_{\sf dual}&= L - \eta \overline L,
\end{align*}
we have $\overline L_{\sf dual} w_{1,M} = 0 = \overline L_{\sf dual} w_{2,M}$ for all $M$ for which $\cs$ is $M$-admissible.
\end{lemma}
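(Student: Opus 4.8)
The plan is to recast the desired identities $\overline L_{\sf dual}w_{1,M}=\overline L_{\sf dual}w_{2,M}=0$ as a single proportionality statement. Since $\overline L_{\sf dual}=L-\eta\overline L$, finding such an $\eta$ amounts to showing that the two vectors $(Lw_{1,M},Lw_{2,M})$ and $(\overline Lw_{1,M},\overline Lw_{2,M})$ are scalar multiples of one another, and then declaring $\eta$ to be the proportionality factor. Lemma~\ref{L:DualDiffeo} guarantees $(\overline Lw_{1,M},\overline Lw_{2,M})\neq(0,0)$, so once proportionality is in hand $\eta$ is determined locally by $Lw_{j,M}=\eta\,\overline Lw_{j,M}$ for whichever index $j$ has $\overline Lw_{j,M}\neq0$; the two resulting local formulas agree on their overlap exactly because of the proportionality, producing a well-defined smooth $\eta$. (Only $\overline L_{\sf dual}$ is constrained by the statement; $L_{\sf dual}$ is its companion and needs no separate treatment.)

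The core computation is to differentiate the identity~\eqref{E:ZetaInTgtLine}, namely $\Phi_M\big((\zeta_1,\zeta_2),(w_{1,M},w_{2,M})\big)\equiv0$ along $\cs$, once by $\overline L$ and once by $L$. Applying $\overline L$ and using $\overline L\zeta_k=0$ annihilates every purely-$\zeta$ term and leaves
\[
(\overline Lw_{1,M})\,P+(\overline Lw_{2,M})\,Q=0,
\]
where $P=m_{11}\zeta_1+m_{12}\zeta_2+b_1$ and $Q=m_{21}\zeta_1+m_{22}\zeta_2+b_2$. Applying $L$ produces the analogous relation $(Lw_{1,M})P+(Lw_{2,M})Q=0$, but only after one checks that the leftover terms collect into $a_1\mu_1+a_2\mu_2+\sum_{j,k}w_{j,M}\,m_{jk}\,\mu_k$, which vanishes: it equals $\Phi_M(\zeta+\mu,w_M)-\Phi_M(\zeta,w_M)$, and both $\zeta$ and $\zeta+(\mu_1,\mu_2)$ lie on the tangent line~\eqref{E:TgtLineEq} by~\eqref{E:TgtLinePar} (with $\upsilon=0$ and $\upsilon=1$). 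Thus both gradient vectors are bilinearly orthogonal to $(P,Q)$.

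Now I invoke $M$-regularity: the vector $(P,Q)$ is precisely the second column of the matrix in~\eqref{E:MRegDef}, so $M$-regularity forces $(P,Q)\neq(0,0)$. In $\C^2$ the bilinear-orthogonal complement of a nonzero vector is one-dimensional, so both $(Lw_{1,M},Lw_{2,M})$ and $(\overline Lw_{1,M},\overline Lw_{2,M})$ are multiples of the single vector $(Q,-P)$, hence of each other. This is the sought proportionality and defines $\eta$; uniqueness of $\eta$ for fixed $M$ follows at once from Lemma~\ref{L:DualDiffeo}, since $(\eta'-\eta)\overline Lw_{j,M}=0$ for $j=1,2$ forces $\eta'=\eta$.

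Finally, to see that one and the same $\eta$ serves every admissible $M$, I use Lemma~\ref{L:ProjRel}: if $\cs$ is also $M'$-regular then $(w_{1,M'},w_{2,M'})$ is a holomorphic (projective-automorphism) image of $(w_{1,M},w_{2,M})$, with non-vanishing denominator along $\cs$. Writing $w_{j,M'}=g_j(w_{1,M},w_{2,M})$ with $g_j$ holomorphic, the chain rule for the derivation $\overline L_{\sf dual}$ gives $\overline L_{\sf dual}w_{j,M'}=\sum_k(\dee_{w_k}g_j)\,\overline L_{\sf dual}w_{k,M}$, the antiholomorphic contributions dropping out by holomorphicity, and this is $0$ because each $\overline L_{\sf dual}w_{k,M}=0$. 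Hence the $\eta$ fixed by any single admissible $M$ annihilates $w_{j,M'}$ for all admissible $M'$. I expect the only genuine bookkeeping hurdle to be the $L$-derivative of~\eqref{E:ZetaInTgtLine}: one must cleanly separate the term that vanishes by the tangent-line condition from the term assembling into $(Lw_{1,M})P+(Lw_{2,M})Q$. By contrast, the $M$-independence is a painless consequence of holomorphicity.
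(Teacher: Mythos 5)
Your proof is correct, and it runs on the same engine as the paper's: differentiate the incidence identity satisfied by $(w_{1,M},w_{2,M})$ along $L$ and $\overline L$, extract a linear relation forcing the two gradient vectors to be proportional, define $\eta$ as the ratio (Lemma \ref{L:DualDiffeo} supplying the nonvanishing needed to divide), and use Lemma \ref{L:ProjRel} plus holomorphy of the transition map to transfer the conclusion between admissible matrices. The difference is in execution. The paper first reduces to the normalized matrices $M_2$ (or $M_3$) via Lemma \ref{L:ProjRel} and Remark \ref{R:M2-M3}, where the incidence relation \eqref{E:NormalizeGraph2} has a single nontrivial $w$-coefficient and the computation \eqref{E:Eq2}--\eqref{E:Eq2b} is immediate, then pushes back to general $M$. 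You instead differentiate $\Phi_M(\zeta,w_M)=0$ for arbitrary $M$ directly; this costs you the extra observation that $a_1\mu_1+a_2\mu_2+\sum_{j,k}w_{j,M}\,m_{jk}\,\mu_k=\Phi_M(\zeta+\mu,w_M)-\Phi_M(\zeta,w_M)=0$ (the general-$M$ avatar of \eqref{E:Eq1}, correctly justified by the two incidences at $\upsilon=0$ and $\upsilon=1$ in \eqref{E:TgtLinePar}), but it buys a clean identification of exactly where $M$-regularity enters --- the nonvanishing of the second column $(P,Q)$ of the matrix in \eqref{E:MRegDef}, whose bilinear-orthogonal complement in $\C^2$ is the line spanned by $(Q,-P)$. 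Both routes land in the same place; yours avoids the $M_2$/$M_3$ case split and makes the role of the regularity determinant explicit, at the price of slightly more bookkeeping in the $L$-differentiation, which you flagged and handled correctly.
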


\begin{proof}
 At points where $\cs$ is $M_2$-admissible,   Lemma \ref{L:ProjRel} allows us to assume that $M=M_2$. 
 
 From \eqref{E:MuDef} and \eqref{E:WForM2} we obtain $w_{1,M_2}(\zeta)=\frac{\mu_2(\zeta)}{2i\mu_1(\zeta)}$ and hence
 \begin{equation}\label{E:Eq1}
2i(L\zeta_1)w_{1,M_2}=\,L\zeta_2.
\end{equation}
Applying $L$ to \eqref{E:NormalizeGraph2} and using \eqref{E:Eq1}, we find that
\begin{subequations}\label{E:Eq2}
\begin{align}
 0 &= 2(L\zeta_1)w_{1,M_2}+2 \zeta_1 (Lw_{1,M_2})-i Lw_{2,M_2}+ i L\zeta_2 \notag \\
    &=2 \zeta_1 (Lw_{1,M_2})-i Lw_{2,M_2}  \\
 0 &= 2 \zeta_1 (\overline{L}w_{1,M_2})-i\overline{L}w_{2,M_2}. \label{E:Eq2b}
\end{align}
\end{subequations}

By the proof of  Lemma \ref{L:DualDiffeo}  we have $\overline{L}w_{1,M_2}\ne0$, allowing us to define 
\begin{equation}\label{E:EtaDef}
\eta= \frac{L w_{1,M_2}}{\overline{L} w_{1,M_2}},
\end{equation}
so that $\overline L_{\sf dual} w_{1,M_2} = 0$.  From (\ref{E:Eq2}a), (\ref{E:Eq2}b) we see that also $\overline L_{\sf dual} w_{2,M_2} = 0$ as required.

A similar argument holds in the $M_3$-admissible case, and the general case follows as before by application of Remark \ref{R:M2-M3}. \end{proof}

We now define the (projective) dual {\em CR}-structure on $\cs$:

\begin{definition}\label{D:DualCRStructure}
We declare $L_{\sf dual}$ to be dual-type $(1,0)$; thus a function $f$ on $\cs$ is dual-$CR$ if and only if $\overline L_{\sf dual} f=0$.  Note that this definition is set up so that $w_{1,M}$ and $w_{2,M}$ are dual-$CR$ for all choices of $M$ for which $\cs$ is $M$-admissible.
\end{definition} 

Referring now to Proposition \ref{P:GenMLeray} (or to \eqref{E:LerM2} or \eqref{E:LerM1}) we see that $\bm{L}_{\cs}f(z)$ is obtained by integrating
$f$ against the dual-$CR$ function  $ \big((1, w_{1,M}, w_{2,M}) M (1, z_1, z_2)^T\big)^{-2}$ with respect to the 3-form $\nu_M$.  This may be compared with the corresponding Szeg\H o projection, obtained by integrating against a conjugate-$CR$ function.

\section{Factorization of $\bm{L}_{S}$}\label{S:FactorLS}

For any $M$-admissible strongly $\C$-convex hypersurface $\cs$, the three-form $\nu_M$, the measure $|\nu_M|$ and the phase function $\ph{\nu_M}$ are defined in \eqref{E:NuDef} and Remark \ref{R:NuTransB}.

In the special case of $\cs_{\beta}$ we have $|\nu_{M_2}|=\nu_{M_2}=\pi^{-2}\,dx_1\w dy_1\w dx_2$
and $\ph{\nu_{M_2}}=1$ (with notation as in Section \ref{SS:L2ViaSizeEstimates}).  In fact, from Remark \ref{R:NuTransB} we have $\ph{\nu_{M_2}}=1$ for all rigid hypersurfaces, and the proof of Proposition \ref{P:NuNonDegen} (see equation \eqref{E:PhaseFunction}) can be used to check that $\ph{\nu_{M_2}}$ is constant only for hypersurfaces that are $\zeta_2$-rotations of rigid hypersurfaces.

For the remainder of this section, all $L^2$-norms are defined using the measure $|\nu_M|$ where  $M$ is a matrix for which $\cs$ is $M$-admissible.  The dependence of $|\nu_M|$ (and $\ph{\nu_M}$) on the choice of $M$ was explained in Remark \ref{R:NuTransB}.

\subsection{Orthogonal and skew projections}\label{SS:OrthSkewProj}

Focus now on the case of smooth bounded strongly $\C$-convex $\cs$ (which will be $M_1$-admissible after a translation).
Let $\Omega$ denote the domain bounded by $\cs$.  Lanzani and Stein's main result in \cite{LanSte14} guarantees that $\bm{L}_{\cs}$ defines a bounded projection operator from $L^2(\cs,|\nu_M|)$ onto the Hardy space $H^2(\cs,|\nu_M|)$ of $L^2$ boundary values of holomorphic functions on $\Omega$.  We often omit the measure when writing these spaces. In particular, we have
that $\bm{L}_{\cs} \circ \bm{L}_{\cs}=\bm{L}_{\cs}$ and 
\begin{equation*}
H^2(\cs)=\ker\left(\bm{L}_{\cs}-\bm{I}\right)=\bm{L}_{\cs}\left(L^2(\cs)\right).
\end{equation*}

Let $\bm{P}_{\cs}$ denote the orthogonal projection from $L^2(\cs)$ onto $\left(\ker \bm{L}_{\cs}\right)^\perp$. Define the space $W_{M}(\cs)=W(\cs)$ by
\begin{equation}\label{E:W(S)definition}
W(\cs)=\left\{ \ph{\nu_M} h : h \in \left(\ker \bm{L}_{\cs}\right)^\perp \right\},
\end{equation}
and let $\bm{R}_{\cs}\colon L^2(\cs)\to \chd(\cs)$ denote the surjective operator given by $f\mapsto\ph{\nu_M}\cdot\bm{P}_{\cs}f$.  Note that  $\|\bm{R}_{\cs}\|=\|{\bm{P}}_{\cs}\|=1$. 
For each $z\in \cs$, define the dual-$CR$ function
\begin{align}
g_z(\zeta) &=\Phi_M\big( (z_1,z_2), (w_{1,M}(\zeta),w_{2,M}(\zeta)) \big)^{-2} \notag\\ 
&=\big((1, w_{1,M}(\zeta), w_{2,M}(\zeta)) M (1, z_1, z_2)^T\big)^{-2}.
\end{align}

\begin{lemma}\label{L:gzDensity} The conjugate  space $\overline{W(\cs)}$ is the closed span of $\left\{g_z\colon z\in\Omega\right\}$. 
\end{lemma}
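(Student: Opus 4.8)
The plan is to convert the integral formula for $\bm{L}_{\cs}$ into a Hilbert-space pairing against the functions $g_z$ and then read off the lemma from a single unitary change of variable. Write $\tau=\ph{\nu_M}$ for the unimodular phase function and let $U$ denote the unitary operator on $L^2(\cs,|\nu_M|)$ given by multiplication by $\tau$, so that $U^*$ is multiplication by $\overline\tau$. For $z\in\Omega$ the quantity $(1,w_{1,M},w_{2,M})M(1,z_1,z_2)^T$ is bounded away from zero on $\cs$, since no complex tangent line of $\cs$ meets $\Omega$ by $\C$-convexity; hence $g_z$ is bounded, so $g_z\in L^2(\cs)$. Using $\nu_M=\tau\,|\nu_M|$ together with Proposition \ref{P:GenMLeray}, I would first record the pairing identity
\begin{equation*}
\bm{L}_{\cs}f(z)=\int_{\cs}f\,g_z\,\nu_M=\int_{\cs}(\tau f)\,g_z\,|\nu_M|=\langle \tau f,\overline{g_z}\rangle,\qquad z\in\Omega,
\end{equation*}
where $\langle F,G\rangle=\int_\cs F\overline{G}\,|\nu_M|$ and we have used $\overline{\overline{g_z}}=g_z$.

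Next I would establish the key equivalence $f\in\ker\bm{L}_{\cs}\iff \tau f\perp\overline{g_z}$ for all $z\in\Omega$. Granting the pairing identity, this amounts to the assertion that the holomorphic function $z\mapsto\bm{L}_{\cs}f(z)$ vanishes identically on $\Omega$ if and only if $f\in\ker\bm{L}_{\cs}$. One direction is immediate; for the other I would invoke the Lanzani--Stein theory quoted above, namely that $\bm{L}_{\cs}$ is the bounded projection of $L^2(\cs)$ onto the Hardy space $H^2(\cs)$, that the boundary values of the holomorphic extension $z\mapsto\bm{L}_{\cs}f(z)$ agree with the operator output $\bm{L}_{\cs}f\in L^2(\cs)$, and that an $H^2$ function with vanishing $L^2$ boundary values is identically zero. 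This boundary-value/uniqueness step is the only nontrivial input, and is where I expect the main difficulty to lie; everything after it is formal.

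Finally I would carry out the unitary bookkeeping. Set $V=\overline{\mathrm{span}}\{\overline{g_z}:z\in\Omega\}\subseteq L^2(\cs)$. The equivalence says precisely that $\ker\bm{L}_{\cs}=U^*(V^{\perp})$. Since $U^*$ is unitary it carries orthogonal complements to orthogonal complements, whence $(\ker\bm{L}_{\cs})^{\perp}=U^*(V)$, using $V^{\perp\perp}=V$. The definition \eqref{E:W(S)definition} of $W(\cs)$ then yields
\begin{equation*}
W(\cs)=\tau\cdot(\ker\bm{L}_{\cs})^{\perp}=U\big(U^*(V)\big)=V=\overline{\mathrm{span}}\{\overline{g_z}:z\in\Omega\}.
\end{equation*}
Because complex conjugation is an antilinear isometry of $L^2(\cs)$ that sends a closed span to the closed span of the conjugated generators, passing to conjugates gives $\overline{W(\cs)}=\overline{\mathrm{span}}\{g_z:z\in\Omega\}$, which is exactly the claimed identity.
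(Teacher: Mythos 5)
Your proposal is correct and follows essentially the same route as the paper: the pairing identity from Proposition \ref{P:GenMLeray}, the characterization of $\ker\bm{L}_{\cs}$ as the annihilator of $\{\overline{\ph{\nu_M}g_z}\}$, and multiplication by the unimodular phase to pass to $W(\cs)$. Your extra bookkeeping via the unitary $U$ and your explicit appeal to the boundary-value/uniqueness theory to justify that $\bm{L}_{\cs}f$ vanishes on $\Omega$ iff $f\in\ker\bm{L}_{\cs}$ only make explicit steps the paper leaves implicit.
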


\begin{proof}
From Proposition \ref{P:GenMLeray}, $\bm{L}_{\cs}f(z) = \int_{\cs} f \nu_M g_z = \int_{\cs} f \left|\nu_M\right| \ph{\nu_M} g_z$. Hence, $\bm{L}_{\cs}f=0$ if and only if 
\begin{equation*}
f\perp_{L^2(\cs,|\nu_M|)}\left\{\overline{\ph{\nu_M} g_z}\colon z\in\Omega\right\}.
\end{equation*}
Consequently,
$\left(\ker \bm{L}_{\cs}\right)^\perp$ is the closed span of $\left\{\overline{\ph{\nu_M} g_z}\colon z\in\Omega\right\}$ and thus $W(\cs)$ is the closed span of $\left\{\overline{g_z}\colon z\in\Omega\right\}$.  The claim follows.
\end{proof}

\begin{remark}
We will show below in  Proposition \ref{P:ConjDualHardy} that $\overline{W(\cs)}$ is in fact the Hardy space $H^2_\dual(\cs)$ corresponding to the projective dual $CR$ structure.  $\lozenge$
\end{remark}

For $f\in L^2(\cs), h\in (\ker \bm{L}_{\cs})^\perp$ we have $\overline{\ph{\nu_M} h}\in\overline{W(\cs)}$ and thus
\begin{align*}
\int_\cs f\,\nu_M\,\overline{\ph{\nu_M} h} = \int_\cs f \,|\nu_M| \, \overline{h} &= \int_\cs \left(\bm{P}_{\cs}f\right) |\nu_M| \,\overline{h}\\
&=\int_\cs \left(\bm{R}_{\cs}f\right) |\nu_M| \, \overline{\ph{\nu_M} h}.
\end{align*}

Recalling Proposition \ref{P:GenMLeray} and setting $\overline{\ph{\nu_M} h}=g_z$ above, we find that
\begin{align}\label{E:FactorLThruR}
\bm{L}_{\cs} f(z) = \int_\cs f \,\nu_M \, g_z =  \int_\cs \left(\bm{R}_{\cs}f\right)  |\nu_M| \, g_z &=  \int_\cs \overline{\ph{\nu_M}} \left(\bm{R}_{\cs}f\right) \nu_M \, g_z\notag\\
&= \bm{L}_{\cs} \left(  \overline{\ph{\nu_M}}\left(\bm{R}_{\cs}f\right)\right) (z).
\end{align}

\begin{theorem}\label{T:FactorLeray}
Define the operator $\bm{Q}_{\cs}:\chd(\cs)\to H^2(\cs)$ by 
$
 f\mapsto\bm{L}_{\cs}\left(\overline{\ph{\nu_M}}\cdot f\right).
$
Then
\refstepcounter{equation}\label{N:Q-Op}
\begin{enum}
\item $\bm{L}_{\cs} = \bm{Q}_{\cs} \circ \bm{R}_{\cs}$ \label{I:LQR}
\item $\|\bm{L}_{\cs}\|=\|\bm{Q}_{\cs}\|$\label{I:LNormFromQNorm}
\item $\bm{Q}_{\cs}$ is invertible. \label{I:QInv}
\end{enum}
\end{theorem}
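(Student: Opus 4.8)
The plan is to identify $\bm{Q}_{\cs}$, up to an isometric relabeling by the phase function, with the restriction of $\bm{L}_{\cs}$ to $\left(\ker\bm{L}_{\cs}\right)^\perp$, and then to extract all three conclusions from the single fact that $\bm{L}_{\cs}$ is a bounded idempotent.

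First I would observe that multiplication by the unimodular function $\ph{\nu_M}$ is a unitary operator $U$ on $L^2(\cs)$ whose inverse is multiplication by $\overline{\ph{\nu_M}}$. By the definition \eqref{E:W(S)definition} of $W(\cs)$, $U$ maps $\left(\ker\bm{L}_{\cs}\right)^\perp$ isometrically onto $W(\cs)$. Hence each $f\in W(\cs)$ can be written $f=\ph{\nu_M}h$ with $h:=\overline{\ph{\nu_M}}f\in\left(\ker\bm{L}_{\cs}\right)^\perp$ and $\|h\|=\|f\|$, and the defining formula then gives $\bm{Q}_{\cs}f=\bm{L}_{\cs}h$. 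In other words, $\bm{Q}_{\cs}$ is unitarily equivalent to $\bm{L}_{\cs}\big|_{(\ker\bm{L}_{\cs})^\perp}$.

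The first assertion is then exactly \eqref{E:FactorLThruR}: since $\bm{R}_{\cs}f=\ph{\nu_M}\,\bm{P}_{\cs}f$, applying $\bm{Q}_{\cs}$ gives $\bm{Q}_{\cs}(\bm{R}_{\cs}f)=\bm{L}_{\cs}(\overline{\ph{\nu_M}}\,\ph{\nu_M}\,\bm{P}_{\cs}f)=\bm{L}_{\cs}f$. For the norm identity, combining this with $\|\bm{R}_{\cs}\|=1$ yields $\|\bm{L}_{\cs}\|\le\|\bm{Q}_{\cs}\|$; for the reverse I would use the elementary fact that a bounded projection has the same norm as its restriction to the orthogonal complement of its kernel. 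Concretely, writing any $g=g_1+g_0$ with $g_1\in(\ker\bm{L}_{\cs})^\perp$ and $g_0\in\ker\bm{L}_{\cs}$, one has $\bm{L}_{\cs}g=\bm{L}_{\cs}g_1$ and $\|g_1\|\le\|g\|$, so $\|\bm{L}_{\cs}\|=\big\|\bm{L}_{\cs}\big|_{(\ker\bm{L}_{\cs})^\perp}\big\|=\|\bm{Q}_{\cs}\|$, the last equality by the unitary equivalence above.

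Finally, for invertibility it suffices, under the same identification, to show that $\bm{L}_{\cs}\big|_{(\ker\bm{L}_{\cs})^\perp}\colon(\ker\bm{L}_{\cs})^\perp\to H^2(\cs)$ is a bounded bijection. Injectivity is immediate because $\ker\bm{L}_{\cs}\cap(\ker\bm{L}_{\cs})^\perp=\{0\}$; surjectivity follows since any $y\in H^2(\cs)=\bm{L}_{\cs}\left(L^2(\cs)\right)$ equals $\bm{L}_{\cs}x$ for some $x$, and replacing $x$ by its orthogonal projection onto $(\ker\bm{L}_{\cs})^\perp$ leaves $\bm{L}_{\cs}x$ unchanged. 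Both subspaces are closed --- the range because $\bm{L}_{\cs}$ is a bounded projection --- so the open mapping theorem supplies a bounded inverse. I do not expect a genuine obstacle here: once the unimodular relabeling is in place the whole statement reduces to standard Hilbert-space facts about bounded idempotents, the only points requiring care being the norm identity in (b) and the bookkeeping that $H^2(\cs)$ is exactly $\operatorname{ran}\bm{L}_{\cs}$, so that surjectivity really lands in the stated target space.
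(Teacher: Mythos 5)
Your proposal is correct and follows essentially the same route as the paper: part (a) is read off from \eqref{E:FactorLThruR}, part (b) combines the factorization with $\|\bm{R}_{\cs}\|=1$ and the isometric nature of multiplication by the unimodular phase $\ph{\nu_M}$, and part (c) gets injectivity from $\ker \bm{L}_{\cs}\cap(\ker \bm{L}_{\cs})^\perp=\{0\}$ and surjectivity from the factorization. The only differences are cosmetic: you phrase the bound $\|\bm{Q}_{\cs}\|\le\|\bm{L}_{\cs}\|$ via the general fact that a bounded idempotent has the same norm as its restriction to $(\ker)^\perp$, where the paper reads it directly off the definition of $\bm{Q}_{\cs}$, and you add the (correct, and slightly more careful) observation that boundedness of the inverse follows from the open mapping theorem since both $\chd(\cs)$ and $H^2(\cs)$ are closed.
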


\begin{proof}
The factorization \itemref{N:Q-Op}{I:LQR} follows from \eqref{E:FactorLThruR}.

To prove \itemref{N:Q-Op}{I:LNormFromQNorm} note that $\|\bm{Q}_{\cs}\|\le \|\bm{L}_{\cs}\|$ from the definition of $\bm{Q}_{\cs}$ and that $\|\bm{L}_{\cs}\|\le \|\bm{Q}_{\cs}\|$ from \itemref{N:Q-Op}{I:LQR} and $\|\bm{R}_{\cs}\|=1$.

To prove \itemref{N:Q-Op}{I:QInv} note first that surjectivity of $\bm{Q}_{\cs}$ follows from \itemref{N:Q-Op}{I:LQR}.  To verify injectivity, note that $f=\ph{\nu_M}h\in \chd(\cs)$, $\bm{Q}_{\cs} f=0$ implies
$h\in \left(\ker \bm{L}_{\cs}\right)^\perp$ and 
$\bm{L}_{\cs}h=0$ hence $h=0=f$.
\end{proof}  

Suppose now that $\cs$ is the $M$-admissible strongly $\C$-convex boundary of an {\em unbounded} domain $\Omega$.  As before, equip $\cs$ with the measure $|\nu_M|$.  Define $\bm{L}_{\cs}, g_z$ etc. as above, and suppose that the following hold:
\refstepcounter{equation}\label{N:UnbddSCond}
\begin{enum}
\item $\|g_z\|_{ L^2(\cs)}$ is a locally bounded function of  $z\in\Omega$, hence $\bm{L}_{\cs}$ maps $L^2(\cs) \to \co(\Omega)$  ;
\item Taking boundary values, we obtain a projection operator $\bm{L}_{\cs}\colon L^2(\cs)\to L^2(\cs)$.
\end{enum}

If we then define the Hardy space $H^2(\cs)\subset L^2(\cs)$ to be the range of this operator we find that Theorem \ref{T:FactorLeray} and the preceding discussion carries over immediately.  When $\cs = \cs_\beta$, compare (\ref{N:UnbddSCond}a) with Corollary \eqref{C:UnifNormBddsOnCompacts}.

\subsection{Factorization of $\bm{L}_{\beta}$}\label{SS:FactorLBeta}

Restrict focus now to $\cs_\beta$ with the measure $\sigma = dx_1 \wedge dy_1 \wedge dx_2$, though we often omit $\sigma$ below.  Recall that $\cs_{\beta}$ is $M_2$-admissible and  observe that $\sigma = \pi^2|\nu_{M_2}|$.  From Section \ref{SS:HilbertSchmidtOperators} we have the Paley-Wiener-type result that  \[\reallywidehat{H^2(\cs_\beta)}:=\cf_r^{-1}\cf_s^{-1}\left(H^2(\cs_\beta)\right)\] is the set of square-integrable functions
$h(\xi_r,\xi_s,t) = \varphi(\xi_r,\xi_s)m_{0,\xi_r,\xi_s}(t)$. Recalling that $m_{0,\xi_r,\xi_s}(t)$ vanishes for $\xi_s \ge 0$,
\begin{align}\label{E:FT-of-HardyNormSq}
\norm{h}_{L^2(\R^3)}^2 &= \int_{-\infty}^\infty\int_{-\infty}^0\int_{-\infty}^\infty |\varphi(\xi_r,\xi_s)m_{0,\xi_r,\xi_s}(t)|^2\,dt\,d\xi_s\,d\xi_r \notag \\
&= \frac{2\sqrt{2}}{1+\beta} \int_{-\infty}^\infty\int_{-\infty}^0 \sqrt{-\xi_s}\cdot|\varphi(\xi_r,\xi_s)|^2\,d\xi_s\,d\xi_r<\infty.
\end{align}

Recalling the general formula $\reallywidehat{\overline f}(\xi)=\overline{\reallywidehat f}(-\xi)$ and noting that  $m_{1,\xi_r,\xi_s}(t)=\overline{m_{1,\xi_r,\xi_s}(t)}$ we similarly find that the spaces from Section \ref{SS:OrthSkewProj} correspond to 
\begin{subequations} 
\begin{align}
\hat{L^2(\cs_\beta)} &= L^2(\R^3),\\
\reallywidehat{\ker \bm{L}_{\beta}} &= \left\{\psi\in L^2(\R^3)\colon \int_\R \psi(\xi_r,\xi_s,t)m_{1,\xi_r,\xi_s}(t)\,dt=0 \text{ a.e. }(\xi_r,\xi_s)\in\R^2\right\},\\
\reallywidehat{\left(\ker \bm{L}_{\beta}\right)^\perp} &= \reallywidehat{W(\cs_{\beta})} \\
&= \Bigg\{\varphi(\xi_r,\xi_s)m_{1,\xi_r,\xi_s}(t)\colon 
 \int_{-\infty}^{\infty} \int_{-\infty}^{\,0} \frac{1}{\sqrt{-\xi_s}} |\varphi(\xi_r,\xi_s)|^2\,d\xi_s\,d\xi_r<\infty\Bigg\}, \notag\\
\reallywidehat{\overline{W(\cs_{\beta}})} &= \Bigg\{\varphi(\xi_r,\xi_s)m_{1,-\xi_r,-\xi_s}(t)\colon  \int_{-\infty}^{\infty} \int_{0}^{\infty} \frac{1}{\sqrt{\xi_s}} |\varphi(\xi_r,\xi_s)|^2\,d\xi_s\,d\xi_r < \infty\Bigg\}. \label{E:FourierDualHardySpace}
\end{align}
\end{subequations}
Similarly, the operators correspond to
\begin{subequations} 
\begin{align}
\hat{\bm{L}_{\beta}} &= \sm\\
\hat{\bm{R}_{\beta}}&=\hat{\bm{P}_{\beta}}\colon \psi \mapsto \frac{m_{1,\xi_r,\xi_s}(t)}{\|m_{1,\xi_r,\xi_s}(\cdot)\|_{L^2(\R)}^2}
\int_\R \psi(\xi_r,\xi_s,u)m_{1,\xi_r,\xi_s}(u)\,du\\
\hat{\bm{Q}_{\beta}} &= \sm\Big|_{\widehat{W(\cs_{\beta})}}
\end{align}
\end{subequations}
respectively.  The following diagram keeps track of these spaces and operators.

\begin{center}
\begin{tikzpicture}
  \matrix (m) [matrix of math nodes,row sep=4em,column sep=4em,minimum width=2em]
  {
     L^2(\cs_{\beta}) & \ & \overset{\cf_r^{-1}\cf_s^{-1}}{\parbox{1.0cm}{\rightarrowfill}}  & L^2(\R^3) & \  \\
     W(\cs_{\beta}) & H^2(\cs_\beta) \ & \overset{\cf_s\cf_r}{\parbox{1.0cm}{\leftarrowfill}} & \reallywidehat{W(\cs_{\beta})} & \reallywidehat{H^2(\cs_{\beta})}  \\};
     

  \path[-stealth]
  
    (m-1-1) edge node [left] {$\bm{P}_{\beta}=\bm{R}_{\beta}$} (m-2-1)
            edge node [above] {$\ \ \ \ \ \bm{L}_{\beta}$}
            (m-2-2)
            
    (m-2-1) edge node [below] {$\bm{Q}_{\beta}$}
            (m-2-2)

    (m-1-4) edge node [left] {$\hat{\bm{R}_{\beta}}$} (m-2-4) 
            edge node [above] {$\ \ \ \ \ \ \ \ \hat{\bm{L}_{\beta}} = \sm$} (m-2-5)
            
    (m-2-4) edge node [below] {$\hat{\bm{Q}_{\beta}}$}
            (m-2-5);
            
  

\end{tikzpicture}
\end{center}

It is easy to use these formulas to provide a separate verification of the assertions of Theorem \ref{T:FactorLeray} adapted to $\cs_\beta$.  In fact, even more is true in this special case.

\begin{theorem}\label{T:ConstTimesIsom}
The operator $\sqrt[4]{1-\beta^2}\,\bm{Q}_{\beta}\colon \chd(\cs_{\beta})\to H^2(\cs_\beta)$ is an isometry.
\end{theorem}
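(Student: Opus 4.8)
The plan is to pass to the Fourier side, where the whole situation decouples into rank-one fiber operators, and to reduce the isometry statement to the single scalar identity already buried in the proof of Proposition~\ref{P:MHilbertSchmidtNorm}. Since $\cf_r^{-1}\cf_s^{-1}$ is an isometry of $L^2(\R^3)$ by Plancherel \eqref{E:Plancherel}, and since $\hat{\bm{Q}_{\beta}}=\sm\big|_{\reallywidehat{W(\cs_{\beta})}}$ carries $\reallywidehat{W(\cs_{\beta})}$ into $\reallywidehat{H^2(\cs_{\beta})}$, it suffices to show that $\sqrt[4]{1-\beta^2}\,\sm$, restricted to $\reallywidehat{W(\cs_{\beta})}$, preserves norms onto $\reallywidehat{H^2(\cs_{\beta})}$. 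This is what I would prove fiber by fiber in the parameters $(\xi_r,\xi_s)$ with $\xi_s<0$.

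First I would record the action of $\sm$ on the distinguished generators. A typical element of $\reallywidehat{W(\cs_{\beta})}$ has the form $\varphi(\xi_r,\xi_s)\,m_{1,\xi_r,\xi_s}(t)$; since $m_{1}$ is real-valued, \eqref{E:HibertSchmidtOpM} gives $\sm(\varphi\, m_1)=\varphi\,\norm{m_1}_2^2\,m_0$, which lands in $\reallywidehat{H^2(\cs_{\beta})}$ with amplitude $\varphi\,\norm{m_1}_2^2$. Thus $\hat{\bm{Q}_{\beta}}$ acts fiberwise as multiplication of the amplitude by the scalar $\norm{m_{1,\xi_r,\xi_s}}_2^2$; surjectivity onto $\reallywidehat{H^2(\cs_{\beta})}$ is then transparent (solve $\varphi\,\norm{m_1}_2^2=\psi$), consistent with the invertibility already granted by Theorem~\ref{T:FactorLeray}.

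Next I would compare the two fiber norms. On the domain side the squared norm of $\varphi\,m_1$ equals $|\varphi|^2\,\norm{m_1}_2^2$, while on the target side the squared norm of $\varphi\,\norm{m_1}_2^2\,m_0$ equals $|\varphi|^2\,\norm{m_1}_2^4\,\norm{m_0}_2^2$. The ratio of target to source is therefore $\norm{m_1}_2^2\,\norm{m_0}_2^2$, which is precisely the product of the two Gaussian normalizations evaluated in the proof of Proposition~\ref{P:MHilbertSchmidtNorm}, namely $\norm{\sm}_{HS}^2=(1-\beta^2)^{-1/2}$. Hence $\norm{\hat{\bm{Q}_{\beta}}(\varphi\,m_1)}_2^2=(1-\beta^2)^{-1/2}\,\norm{\varphi\,m_1}_2^2$ on every fiber, so multiplying by $\sqrt[4]{1-\beta^2}$ gives equality of norms fiberwise.

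Finally I would integrate over $(\xi_r,\xi_s)\in\R\times(-\infty,0)$: the fiberwise identity together with the weighted descriptions in \eqref{E:FourierDualHardySpace} and \eqref{E:FT-of-HardyNormSq} shows that $\sqrt[4]{1-\beta^2}\,\sm$ is an isometry of $\reallywidehat{W(\cs_{\beta})}$ onto $\reallywidehat{H^2(\cs_{\beta})}$, and transporting back through $\cf_s\cf_r$ yields the theorem. I expect the only real point of care to be the bookkeeping at the end of the third paragraph: verifying that the two $L^2$-normalizations of $m_0$ and $m_1$ are exactly the two factors that appeared in the Hilbert--Schmidt computation, so that their product collapses cleanly to $(1-\beta^2)^{-1/2}$ with no stray constants. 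Once that identification is in hand the isometry is forced, and the constant $\sqrt[4]{1-\beta^2}$ is exactly what balances the square root of the Hilbert--Schmidt norm.
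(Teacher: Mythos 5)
Your argument is correct and is essentially the paper's own proof, made explicit: the paper derives the theorem in one line from the equality case of Cauchy--Schwarz in \eqref{L2NormOfMg}--\eqref{E:AchievingNormOfL_beta}, which on each fiber is exactly your identity $\norm{\sm(\varphi\,m_1)}_2^2=\norm{m_0}_2^2\,\norm{m_1}_2^2\,\norm{\varphi\,m_1}_2^2=(1-\beta^2)^{-1/2}\norm{\varphi\,m_1}_2^2$. Your version simply writes out the rank-one computation and the integration over $(\xi_r,\xi_s)$ that the paper leaves implicit.
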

This follows immediately from equation \eqref{E:AchievingNormOfL_beta}.


\section{Dual Hardy spaces} \label{S:DualHardy}

\begin{theorem}\label{T:GlobalDual}
If $\cs\subset\C^2$ is an $M$-admissible compact strongly $\C$-convex hypersurface bounding a domain $\Omega$, then the map $w_M=(w_{1,M},w_{2,M})$ is a diffeomorphism from $\cs$ onto a compact strongly $\C$-convex hypersurface $\cs_\dual$ bounding a domain $\Omega_\dual\subset\C^2$. 
\end{theorem}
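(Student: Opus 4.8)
The plan is to reduce to the coordinate choice $M_1$ and then to show that the dual map is an injective immersion whose image is strongly $\C$-convex. The whole construction is covariant under projective automorphisms: such a map transforms the dual coordinates via Lemma \ref{L:ProjRel} and preserves strong $\C$-convexity, so we may place an interior point of $\Omega$ at the origin and take $M=M_1$. Indeed, a compact strongly $\C$-convex $\cs$ enclosing the origin is $M_1$-regular (see the $M_1$ example in Section \ref{SS:ExamplesOfMRegularity}), and for a general admissible $M$ one has $w_M=\Upsilon^*\circ w_{M_1}$ with $\Upsilon^*$ a projective automorphism defined and biholomorphic on the image of $w_{M_1}$; since $\Upsilon^*$ is a diffeomorphism preserving strong $\C$-convexity, the conclusion for $M_1$ transfers. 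In this normalization $w_{M_1}(\zeta)=(w_{1,M_1}(\zeta),w_{2,M_1}(\zeta))$ is the projective dual (Gauss) map, sending $\zeta$ to the coefficients of its complex tangent line written as $z_1w_{1,M_1}+z_2w_{2,M_1}=1$, as in \eqref{E:NormalizeBdd1}.

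Injectivity comes from $\C$-convexity. Since $\cs$ is $\C$-convex, no complex tangent line meets the open domain $\Omega$ (Proposition 2.5.9 in \cite{ScandBook04}), so each such line lies in $\C^2\setminus\Omega$ and can meet $\overline\Omega$ only along $\cs$. If the tangent line $\ell$ at $\zeta$ passed through a second boundary point $\zeta'$, then $\ell$ would be transverse to $\cs$ at $\zeta'$ (forcing $\ell$ into $\Omega$) unless $\ell$ is tangent at $\zeta'$ as well; strong $\C$-convexity rules out a complex line tangent at two distinct points (cf. Section 2.5 in \cite{ScandBook04} and \cite{BarGru18}). Hence distinct points of $\cs$ yield distinct tangent lines and $w_{M_1}$ is injective.

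That $w_{M_1}$ is an immersion is where the dual $CR$ structure of Section \ref{SS:ProjDualCR} does the work, and it is the main obstacle. By Lemma \ref{L:EtaSetup} the components satisfy $\overline L_{\sf dual}w_{j,M_1}=0$, so $w_{M_1}$ is a dual-$CR$ map; this gives $Lw_{j,M_1}=\eta\,\overline Lw_{j,M_1}$ and hence $L_{\sf dual}w_{j,M_1}=(1-|\eta|^2)\,\overline Lw_{j,M_1}$. Strong $\C$-convexity forces $|\eta|\neq 1$, and Lemma \ref{L:DualDiffeo} shows $\overline Lw_{1,M_1},\overline Lw_{2,M_1}$ do not vanish together, so the dual-holomorphic derivatives $(L_{\sf dual}w_{1,M_1},L_{\sf dual}w_{2,M_1})$ never vanish; in a coframe $\{\omega_{\sf dual},\overline\omega_{\sf dual},\theta\}$ adapted to the dual structure this already makes $dw_{M_1}$ injective on the maximal complex subspace. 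Writing $dw_{j,M_1}=c_j\,\omega_{\sf dual}+e_j\,\theta$ with $c_j=L_{\sf dual}w_{j,M_1}$ and $e_j$ the transverse (characteristic) derivative, full real rank $3$ is equivalent to the nonvanishing of the transverse Jacobian $c_1e_2-c_2e_1$, which is the coefficient in $dw_{1,M_1}\w dw_{2,M_1}=(c_1e_2-c_2e_1)\,\omega_{\sf dual}\w\theta$. I would establish $c_1e_2-c_2e_1\neq 0$ by a computation parallel to the proof of Proposition \ref{P:NuNonDegen}, reducing it to the same bordered Levi determinant whose negativity is equivalent to strong pseudoconvexity; this is the step that genuinely consumes strong $\C$-convexity beyond the complex-tangential directions.

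Finally I would assemble the pieces. An injective immersion of the compact manifold $\cs$ is an embedding, so $\cs_\dual:=w_{M_1}(\cs)$ is a compact smooth real hypersurface in $\C^2$, and by the Jordan--Brouwer theorem it bounds a domain $\Omega_\dual$. The map $w_{M_1}$ is then a $CR$ diffeomorphism from $\cs$ with its dual $CR$ structure onto $\cs_\dual$ with its standard structure; since strong $\C$-convexity renders the dual structure strongly pseudoconvex, $\cs_\dual$ is strongly pseudoconvex, and being the smooth boundary of the bounded domain $\Omega_\dual$ it is strongly $\C$-convex. Equivalently, one may invoke the biduality theory for strongly $\C$-convex domains in \cite{ScandBook04}: $\Omega_\dual$ is strongly $\C$-convex and its own dual map recovers $\cs$, which simultaneously reproves that $w_{M_1}$ is a diffeomorphism. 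Throughout, injectivity and the strong $\C$-convexity of the image are comparatively standard consequences of the $\C$-convexity theory in \cite{ScandBook04}, while the transverse nondegeneracy in the immersion step is the real difficulty.
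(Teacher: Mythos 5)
The paper does not actually prove this theorem: its ``proof'' is a one-line citation of Proposition 2.5.12 in \cite{ScandBook04}, so any self-contained argument is automatically a different route. Your outline is the right architecture for how that cited result is established --- injectivity from tangent-line geometry, immersion from a transverse nondegeneracy computation, then compactness plus Jordan--Brouwer --- and your linear-algebra observation is correct: if $c_1e_2-c_2e_1\neq 0$ then $(c_1,e_1)$ and $(c_2,e_2)$ are independent in $\C^2$, so the common kernel of $dw_{1,M_1}$ and $dw_{2,M_1}$ on $\C\times\R$ is trivial and $w_{M_1}$ has real rank $3$. The reduction to $M_1$ via Lemma \ref{L:ProjRel} is also fine, since the non-vanishing of $(1,w_{1,M_1},w_{2,M_1})\Upsilon_0$ keeps the image in the affine chart.

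That said, the proposal has two genuine holes. First, the decisive computation --- that the transverse Jacobian $c_1e_2-c_2e_1$ reduces to the bordered Levi determinant --- is only promised (``I would establish\dots''), not performed; this is precisely the content of Proposition 2.5.12 that the paper outsources, so you have not closed the gap you yourself identify as the real difficulty. Note also that Proposition \ref{P:NuNonDegen} concerns the $3$-form $\left(w_{2,M_1}\,dw_{1,M_1}-w_{1,M_1}\,dw_{2,M_1}\right)\w d\zeta_1\w d\zeta_2$, not $dw_{1,M_1}\w dw_{2,M_1}\w\theta$; the two determinants are related but not identical, so ``a computation parallel to'' must be made precise before it counts as a proof. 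Second, the concluding inference ``$\cs_\dual$ is strongly pseudoconvex and is the smooth compact boundary of a bounded domain, hence strongly $\C$-convex'' is invalid: strong pseudoconvexity of a compact boundary does not imply (strong) $\C$-convexity --- Definition \ref{DStrongCConvexity} requires positive definiteness of the full quadratic form \eqref{E:PositiveDefiniteForm1}, which is strictly stronger, and there are strongly pseudoconvex bounded domains that are not $\C$-convex. The correct route to strong $\C$-convexity of $\cs_\dual$ is biduality (the complex tangent lines of $\cs_\dual$ correspond to points of $\cs$ and hence avoid $\Omega_\dual$, and the second-order normal form dualizes to a hypersurface of the same type, as in Section \ref{SS:DualOfSbeta}); your fallback sentence invoking that theory from \cite{ScandBook04} is the honest version of the argument, but at that point the proposal collapses back to the paper's citation. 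Similarly, the injectivity step (no complex line tangent at two distinct boundary points) is asserted with a pointer to the same reference rather than derived.
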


Note that $\cs_\dual$ and $\Omega_\dual$ depend on $M$.  Whatever the choice of $M$, the $CR$ functions on a relatively open subset of $\cs_\dual$ pull back with respect to $w_M$ to functions on a relatively open subset of $\cs$ that are dual-$CR$ as defined in \ref{D:DualCRStructure}.

\begin{proof}
This is proved in Proposition 2.5.12 of \cite{ScandBook04}.
\end{proof}

\begin{remark}
It is also true that $\cs_\dual$ is $M^{T}$-admissible and that the map $w_{M^{T}}$ for $\cs_\dual$ is the inverse of the map $w_M$ for $\cs$ (see Section 6 in \cite{Bar16}). $\lozenge$
\end{remark}

\subsection{Pullback operators and function spaces}\label{SS:PullbackOpsFunctions}
Continuing with the assumptions of Theorem \ref{T:GlobalDual}, a function on a relatively open subset of $\cs_\dual$ will be $CR$ if and only if its pullback via $w_M$ is $CR$ with respect to the projective dual $CR$-structure described in Section \ref{SS:ProjDualCR}. The pullback $H^{2}_\dual(\cs):=w_M^*\left( H^2(\cs_\dual)\right)$ is the {\em projective dual Hardy space} for $\cs$.  The functions in $H^{2}_\dual(\cs)$ are $L^2$ boundary values of holomorphic functions on 
$\Omega_\dual$.

\begin{remark}
It should be emphasized that while the norms of functions in $H_\dual^2(\cs)$ depend on our choice of matrix $M$, the functions themselves are independent of this choice. $\lozenge$
\end{remark}

The operator $\bm{L}_{\cs_\dual}$ pulls back to an operator $\bm{L}_{\cs}^\dual\colon L^2(\cs)\to L^2(\cs)$ satisfying 
\begin{equation*}\label{E:ProjRuleDual}
\bm{L}_{\cs}^\dual \circ \bm{L}_{\cs}^\dual =\bm{L}_{\cs}^\dual
\end{equation*}\label{E:HardyDualFromLeray}
and
\begin{equation*}
H^2_\dual(\cs)=\ker\left(\bm{L}_{\cs}^\dual-\bm{I}\right)=\bm{L}_{\cs}^\dual\left(L^2(\cs)\right).
\end{equation*}

\begin{proposition}\label{P:NuAdjoint}
For $f,g\in L^2(\cs)$ we have
\begin{equation*}
\int_\cs \left(\bm{L}_{\cs}f\right)\nu_M\, g = \int_\cs f\,\nu_M \left(\bm{L}_{\cs}^\dual g\right)
\end{equation*}
\end{proposition}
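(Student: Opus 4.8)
The plan is to represent both sides as one and the same (absolutely convergent) double integral over $\cs\times\cs$, using the universal dual-coordinate formula of Proposition \ref{P:GenMLeray} on both $\cs$ and its dual $\cs_\dual$, and then to match the two kernels by a transposition symmetry. Write the Leray kernel as $\mathscr{L}_{\cs}(z,\zeta)=\nu_M(\zeta)\big/\Phi_M\big((z_1,z_2),w_M(\zeta)\big)^2$, so that Proposition \ref{P:GenMLeray} reads $\bm{L}_{\cs}f(z)=\int_{\cs}f\,\mathscr{L}_{\cs}(z,\cdot)$. Expanding the left-hand side and interchanging the order of integration gives
\begin{equation*}
\int_\cs \left(\bm{L}_{\cs}f\right)\nu_M\, g
=\int_{\cs\times\cs} f(\zeta)\,g(z)\,
\frac{\nu_M(\zeta)\w\nu_M(z)}{\Phi_M\big((z_1,z_2),w_M(\zeta)\big)^2}.
\end{equation*}

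For the right-hand side I would unwind the definition $\bm{L}_{\cs}^\dual g=\big(\bm{L}_{\cs_\dual}(g\circ w_{M^T})\big)\circ w_M$. By Theorem \ref{T:GlobalDual} and the remark following it, $\cs_\dual$ is $M^T$-regular with $w_{M^T}=w_M^{-1}$, so Proposition \ref{P:GenMLeray} applied on $\cs_\dual$ gives
\begin{equation*}
\bm{L}_{\cs}^\dual g(\zeta)
=\int_{\omega\in\cs_\dual} g\big(w_{M^T}(\omega)\big)\,
\frac{\nu_{M^T}(\omega)}{\Phi_{M^T}\big(w_M(\zeta),w_{M^T}(\omega)\big)^2}.
\end{equation*}
Changing variables by $\omega=w_M(z)$, so that $w_{M^T}(\omega)=z$, and invoking the two key symmetries --- first, that $\Phi_{M^T}\big(w_M(\zeta),z\big)=\Phi_M\big((z_1,z_2),w_M(\zeta)\big)$, since a $1\times1$ matrix equals its transpose; and second, the pullback identity $w_M^*\nu_{M^T}=\nu_M$ --- converts this into $\bm{L}_{\cs}^\dual g(\zeta)=\int_{z\in\cs} g(z)\,\nu_M(z)\big/\Phi_M\big((z_1,z_2),w_M(\zeta)\big)^2$. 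Multiplying by $f\,\nu_M$ and integrating in $\zeta$ reproduces exactly the double integral displayed above, which proves the identity.

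Two points need care. The genuinely substantive step is the form identity $w_M^*\nu_{M^T}=\nu_M$, equivalently the Leray-kernel reciprocity $w_M^*\mathscr{L}_{\cs_\dual}\big(w_M(\zeta),\cdot\big)=\mathscr{L}_{\cs}(\zeta,\cdot)$; I would establish it either by a direct computation --- reducing to the case $M=M_2$ (or $M_3$) via the transformation law of Remark \ref{R:NuTransA}, where by \eqref{E:Nu-M2} one has the simple form $\nu_{M_2}=\tfrac{1}{2\pi^2 i}\,dw_{1,M_2}\w d\zeta_1\w d\zeta_2$ --- or by appeal to the reciprocity of the maps $w_M$ and $w_{M^T}$ from Section 6 of \cite{Bar16}. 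The second point is that the double integral is singular along the diagonal $\{z=\zeta\}$, where $\Phi_M\big((\zeta_1,\zeta_2),w_M(\zeta)\big)=0$ by \eqref{E:ZetaInTgtLine}, so the Fubini interchange is not immediate; I would circumvent this by noting that both sides are bounded bilinear forms on $L^2(\cs)\times L^2(\cs)$ --- using the Lanzani--Stein $L^2$-boundedness of $\bm{L}_{\cs}$ and of $\bm{L}_{\cs_\dual}$ (the latter transported to $\cs$ by the diffeomorphism $w_M$) together with the unimodularity of $\ph{\nu_M}$ --- and then verifying the identity on the dense class for which $\bm{L}_{\cs}f$ and $\bm{L}_{\cs}^\dual g$ are given by genuinely convergent integrals, for instance taking $g=g_z$ with $z\in\Omega$, where the kernel symmetry is an honest pointwise statement. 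I expect the form identity $w_M^*\nu_{M^T}=\nu_M$ to be the main obstacle.
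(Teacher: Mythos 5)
The paper offers no argument here at all---it simply cites Theorem 25 of \cite{Bar16}---so your proposal is necessarily a different route, and its skeleton is the right one: it is essentially the mechanism underlying the cited result. Both of your key identities are correct. The transpose symmetry of $\Phi$ is immediate since a $1\times 1$ matrix equals its transpose, and the form reciprocity $w_M^*\nu_{M^T}=\nu_M$, which you rightly single out as the substantive input, turns out to be easier than you fear in the normalized case: for an $M_2$-regular $\cs$, formula \eqref{E:NuDef} applied to $\cs_\dual$ with the matrix $M_2^T$ gives $\nu_{M_2^T}=-\tfrac{1}{2\pi^2 i}\,d\zeta_1\w dw_{1,M_2}\w dw_{2,M_2}$, so that $\nu_{M_2}-w_{M_2}^*\nu_{M_2^T}$ is a multiple of $dw_{1,M_2}\w d\zeta_1\w\left(dw_{2,M_2}-d\zeta_2\right)$; but \eqref{E:NormalizeGraph2} gives $w_{2,M_2}=-2i\zeta_1 w_{1,M_2}+\zeta_2$, hence $dw_{2,M_2}-d\zeta_2=-2i\left(w_{1,M_2}\,d\zeta_1+\zeta_1\,dw_{1,M_2}\right)$ and the triple wedge vanishes identically. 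The reduction of the general case to $M_2$ (or $M_3$) does require checking that the transformation law of Remark \ref{R:NuTransA} for $\nu_M$ is compatible with the corresponding law for $\nu_{M^T}$ on the dual side---the two scale factors must agree---which is routine but should not be skipped.

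The one genuine flaw is in your treatment of the diagonal singularity. The class $\left\{g_z\colon z\in\Omega\right\}$ spans a dense subspace of $\overline{W(\cs)}=H^2_\dual(\cs)$ only (Lemma \ref{L:gzDensity} and Proposition \ref{P:ConjDualHardy}), and this is a proper closed subspace of $L^2(\cs)$ since $\bm{L}_{\cs}$ has nontrivial kernel. Verifying the bilinear identity for $g$ in that span says nothing about $g\in\ker\bm{L}_{\cs}^\dual$, and that missing half is not disposable: it amounts to the orthogonality of $H^2(\cs)$ and $\ker\bm{L}_{\cs}^\dual$ under the $\nu_M$-pairing, which is essentially the content of the proposition itself. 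A correct limiting argument should instead regularize by moving the evaluation point of $\bm{L}_{\cs}f$ into $\Omega$ (and of the dual transform into $\Omega_\dual$), where $\Phi_M(z,w_M(\zeta))$ is bounded away from zero by $\C$-convexity, apply Fubini to the then absolutely convergent double integral, and pass to boundary values using the $L^2$-bounds of \cite{LanSte14}; alternatively, verify the identity for H\"older-continuous $f,g$, where the principal values converge classically, and conclude by density. With that repair, and the form identity established as above, your argument is a complete proof.
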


\begin{proof}
This is Theorem 25 in \cite{Bar16}.
\end{proof}

\begin{proposition}\label{P:ConjDualHardy}
The space $W(\cs)$ defined by equation \eqref{E:W(S)definition} satisfies $\overline{W(\cs)}=H^2_\dual(\cs)$.
\end{proposition}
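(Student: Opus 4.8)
The plan is to bypass the geometric duality of Theorem~\ref{T:GlobalDual} and instead read off the identity from the bilinear adjoint relation in Proposition~\ref{P:NuAdjoint} by soft functional analysis. Write $B(f,g)=\int_\cs f\,\nu_M\,g$ for the pairing appearing there. Since $\nu_M=\ph{\nu_M}\,|\nu_M|$ (Remark~\ref{R:NuTransB}) with $\ph{\nu_M}$ unimodular, I would introduce the conjugation $J\colon L^2(\cs,|\nu_M|)\to L^2(\cs,|\nu_M|)$ given by $Jg=\overline{\ph{\nu_M}}\,\overline{g}$. A direct check gives $B(f,g)=\langle f,Jg\rangle$, where $\langle\cdot,\cdot\rangle$ denotes the Hermitian inner product on $L^2(\cs,|\nu_M|)$; because $|\ph{\nu_M}|=1$, the map $J$ is an antilinear isometric involution, so $B$ is nondegenerate in each variable.

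The argument then splits into two identifications, both phrased through $J$. First, conjugating the definition \eqref{E:W(S)definition} of $W(\cs)=\{\ph{\nu_M}h:h\in(\ker\bm{L}_\cs)^\perp\}$ termwise gives $\overline{W(\cs)}=\{Jh:h\in(\ker\bm{L}_\cs)^\perp\}=J\!\left((\ker\bm{L}_\cs)^\perp\right)$, a closed subspace since $(\ker\bm{L}_\cs)^\perp$ is closed and $J$ is isometric. Second, unwinding the bilinear orthogonal complement, $g\in(\ker\bm{L}_\cs)^{\perp_B}$ iff $\langle f,Jg\rangle=0$ for all $f\in\ker\bm{L}_\cs$, i.e. iff $Jg\in(\ker\bm{L}_\cs)^\perp$, i.e. iff $g\in J\!\left((\ker\bm{L}_\cs)^\perp\right)$. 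Hence $\overline{W(\cs)}=(\ker\bm{L}_\cs)^{\perp_B}$.

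It remains to match this with $H^2_\dual(\cs)$. I would prove the abstract statement that for the bounded idempotent $\bm{L}_\cs$ and its $B$-adjoint $\bm{L}_{\cs}^\dual$ one has $\operatorname{range}\big(\bm{L}_{\cs}^\dual\big)=(\ker\bm{L}_\cs)^{\perp_B}$. The inclusion $\subseteq$ is immediate from $B(f,\bm{L}_{\cs}^\dual g')=B(\bm{L}_\cs f,g')$ together with $\bm{L}_\cs f=0$. For $\supseteq$, given $g\perp_B\ker\bm{L}_\cs$, I would compute $B\big(f,\,g-\bm{L}_{\cs}^\dual g\big)=B\big((\bm{I}-\bm{L}_\cs)f,\,g\big)$, which vanishes because $(\bm{I}-\bm{L}_\cs)f\in\ker\bm{L}_\cs$ (here idempotency of $\bm{L}_\cs$ is used); nondegeneracy of $B$ then yields $\bm{L}_{\cs}^\dual g=g$. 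Combined with the relation $H^2_\dual(\cs)=\bm{L}_{\cs}^\dual\big(L^2(\cs)\big)$ recorded in Section~\ref{SS:PullbackOpsFunctions}, this gives $H^2_\dual(\cs)=\operatorname{range}\big(\bm{L}_{\cs}^\dual\big)=(\ker\bm{L}_\cs)^{\perp_B}=\overline{W(\cs)}$.

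The only real obstacle is bookkeeping: keeping the Hermitian complement $\perp$ and the bilinear complement $\perp_B$ distinct and remembering that $J$ interchanges them, and verifying the abstract projection identity cleanly (which needs only idempotency of $\bm{L}_\cs$, boundedness so that ranges and kernels are closed, and nondegeneracy of $B$). Notably, this route uses neither Lemma~\ref{L:gzDensity} nor $\C$-convex biduality; as a consistency check, $J$ carries the spanning family $\{\overline{\ph{\nu_M}}\,\overline{g_z}\}$ of $(\ker\bm{L}_\cs)^\perp$ to $\{g_z\}$, recovering the description in Lemma~\ref{L:gzDensity}.
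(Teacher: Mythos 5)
Your proposal is correct and follows essentially the same route as the paper: both arguments hinge on Proposition~\ref{P:NuAdjoint} together with the identification $H^2_\dual(\cs)=\bm{L}_{\cs}^\dual\left(L^2(\cs)\right)$, and both convert the bilinear pairing $\int_\cs f\,\nu_M\,g$ into the Hermitian inner product by peeling off the unimodular phase $\ph{\nu_M}$ --- your antilinear involution $J$ is just a tidy packaging of the paper's passage from $f\perp\overline{\ph{\nu_M}H^2_\dual(\cs)}$ to $W(\cs)=\overline{H^2_\dual(\cs)}$. The only (minor) divergence is that you establish $\operatorname{range}\big(\bm{L}_{\cs}^\dual\big)=(\ker\bm{L}_\cs)^{\perp_B}$ directly from idempotency and nondegeneracy, where the paper instead takes Hermitian orthocomplements and implicitly invokes the closedness of $H^2_\dual(\cs)$; this is a cosmetic rather than substantive difference.
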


\begin{proof}
We start by noting that
\begin{align*}
f\in\ker\bm{L}_{\cs} 
&\Leftrightarrow \int_\cs (\bm{L}_{\cs}f)\nu_M\, g = 0 \text{ for all } g\in L^2(\cs)\\
& \Leftrightarrow \int_\cs f\,\nu_M \left(\bm{L}_{\cs}^\dual g\right) = 0 \text{ for all } g\in L^2(\cs)
& & \text{[by Prop.\,\ref{P:NuAdjoint}]}\\
& \Leftrightarrow f \ph{\nu_M} \perp \overline{H^2_\dual(\cs)}\\
& \Leftrightarrow f \perp \overline{\ph{\nu_M}H^2_\dual(\cs)}.
\end{align*}
Thus $\left( \ker\bm{L}_{\cs} \right)^\perp = \overline{\ph{\nu_M}H^2_\dual(\cs)}$.

Multiplying both sides by $\ph{\nu_M}$ we have $W(\cs)=\overline{H^2_\dual(\cs)}$, as required.
\end{proof}
Combining this with Lemma \ref{L:gzDensity} we find that $\text{span}\left\{g_z\colon z\in\Omega\right\}$ is dense in $H^2_\dual(\cs)$.  Dualizing, we find it also true that the space $\text{span}\left\{h_w\colon w\in\Omega_\dual\right\}$ is dense in $H^2(\cs)$, where
\begin{equation*}
h_w(\zeta)=\Phi_{M}\big( (\zeta_1,\zeta_2), (w_1,w_2) \big)^{-2}.
\end{equation*}

The diagram below depicts the operators $\bm{P}_{\cs}, \bm{R}_{\cs}, \bm{Q}_{\cs}$ introduced in Section \ref{SS:OrthSkewProj}, when the Leray transform is decomposed into factors.  We now see that this factorization passes through the conjugate dual Hardy space.
\begin{center}
\begin{tikzpicture}
  \matrix (m) [matrix of math nodes,row sep=5em,column sep=8em,minimum width=2em]
  {
     L^2(\cs) & \ & \ \\
     (\ker \bm{L}_{\cs})^{\perp} & \overline{H^2_\dual(\cs)} & H^2(\cs)\\};
     
  \path[-stealth]
    (m-1-1) edge node [left] {$\bm{P}_{\cs}$} (m-2-1)
            edge node [right] {$\ \ \ \   \bm{L}_{\cs}$} (m-2-3)
            edge node [left] {$\ \ \ \ \ \ \ \bm{R}_{\cs}$} (m-2-2)
            
            
    (m-2-1) edge node [below] {$(f \mapsto \tau(\nu_{M})\cdot f)$}
            (m-2-2)
            
    (m-2-2) edge node [below] {$\bm{Q}_{\cs}$} (m-2-3);       
\end{tikzpicture}
\end{center}

\subsection{The dual hypersurface of $\cs_{\beta}$}\label{SS:DualOfSbeta}
Turning attention now to the $\cs_\beta$, the assumptions of Theorem \ref{T:GlobalDual} do not apply but we have from \eqref{D:W_(1,M_2)} and \eqref{D:W_(2,M_2)} that 
\begin{equation*}
w_{M_2}\colon (\zeta_1,\zeta_2)\mapsto \left(\bar{\zeta_1}+\beta\zeta_1,\frac{2}{i}\zeta_1(\overline \zeta_1+\beta\zeta_1)+\zeta_2\right)
=\left(\bar{\zeta_1}+\beta\zeta_1, \overline{\zeta_2}-i\beta \zeta_1^2+i\beta \overline{\zeta_1}^2   \right)
\end{equation*}
with inverse
\begin{equation*}
(w_1,w_2)\mapsto \left(\frac{\overline{w_1}-\beta w_1}{1-\beta^2},w_2+\frac{2iw_1(\overline{w_1}-\beta w_1)}{1-\beta^2}\right).
\end{equation*}
This leads to
\begin{equation*}
\cs_{\beta,\dual} := \left\{ (w_1,w_2)\in \C^2: -(1-\beta^2)\text{Im}(w_2) = \left|w_1\right|^2 - \beta\,\text{Re}(w_1^2)\right\}.
\end{equation*}

Note that $\cs_{\beta,\dual}$ is linearly equivalent to $\cs_\beta$ via the map $(w_1,w_2)\mapsto \left(\frac{iw_1}{\sqrt{1-\beta^2}},-w_2\right)$.  The space $H^2_\dual(\cs_\beta)$ and operator $\bm{L}_{\cs_\beta}^\dual$ are induced from $H^2(\cs_{\beta,\dual})$ and $\bm{L}_{\cs_{\beta,\dual}}$ as above.  The proof of Proposition \ref{P:ConjDualHardy} carries over to show that we still have $\overline{W(\cs_{\beta})}=H^2_\dual(\cs_{\beta})$.  

We previously discussed the inverse Fourier transforms of both this space and the original $H^2(\cs_{\beta})$ in Section \ref{SS:FactorLBeta}.  For reference, we include them below.  Recall that $m_{0,\xi_r,\xi_s}(t)$ and $m_{1,\xi_r,\xi_s}(t)$ are defined in \eqref{E:DefM_0} and \eqref{E:DefM_1}.
\begin{align}
\reallywidehat{H^2(\cs_{\beta})} &= \left\{\varphi(\xi_r,\xi_s) m_{0,\xi_r,\xi_s}(t): \int_{-\infty}^{\infty}\int_{-\infty}^0 \sqrt{-\xi_s}\cdot|\varphi(\xi_r,\xi_s)|^2 \,d\xi_s d\xi_r < \infty \right\}, \\
\reallywidehat{H_\dual^2(\cs_{\beta})} &= \left\{\varphi(\xi_r,\xi_s) m_{1,-\xi_r,-\xi_s}(t): \int_{-\infty}^{\infty}\int_{0}^\infty \frac{1}{\sqrt{\xi_s}}\cdot|\varphi(\xi_r,\xi_s)|^2 \,d\xi_s d\xi_r < \infty \right\}.
\end{align}

Note that the space $\reallywidehat{H^2(\cs_{\beta})}$ and the {\em conjugate} dual space \reallywidehat{\overline{H_\dual^2(\cs_{\beta})}} coincide when $\beta = 0$.


\appendix

\section{$L^2$-norms of the kernel function} \label{A:LbetaConvegence}

Like the Cauchy transform in one complex variable, the Leray transform on a bounded $\C$-convex hypersurface $\cs$ constructs holomorphic functions on the $\C$-convex domain it bounds.  But dealing with unbounded hypersurfaces requires more delicacy.  This appendix shows that $\bm{L}_{\beta}$ constructs holomorphic functions on $\Omega_{\beta}$ from boundary functions in $L^2(\cs_{\beta},\sigma)$.

\subsection{Automorphisms of $\Omega_{\beta}$}\label{SS:Aut of Omega_{beta}}

Partition $\Omega_{\beta}$ as an infinite union of translates of $\cs_{\beta}$.  For all $\epsilon \ge 0$, define the hypersurface 
\begin{equation*}
\cs_{\beta}^{\,\epsilon} := \{(z_1,z_2) : (z_1,z_2 - i\epsilon) \in \cs_{\beta} \},
\end{equation*}
and note that if $z \in \cs_{\beta}^{\,\epsilon}$, then $\im(z_2) = |z_1|^2 + \beta \re(z_1^2) + \epsilon$.  It is clear that $\cs_{\beta}^{0} = \cs_{\beta}$ and $\Omega_{\beta} = \cup_{\epsilon > 0} \, \cs_{\beta}^{\,\epsilon}.$  The maps $\phi_{(c,s)}$ defined by equation \eqref{E:DefinitionPhi_(c,s)} extend to automorphisms of $\Omega_{\beta}$.  In fact, they preserve each shell:

\begin{proposition}\label{P:AutOfShells}
The affine map 
\begin{equation}\label{E:AutomorphismDefRedux}
\phi_{(c,s)}(z_1,z_2) = \big(z_1+c, z_2 + 2i(\bar c + \beta c)z_1 + i\big(|c|^2+\beta\, {\mathrm{Re}}(c^2)\big)+ s \big)
\end{equation}
is an automorphism of each $\cs_{\beta}^{\,\epsilon}$ for all choices of $c\in \C$ and $s \in \R$.
\end{proposition}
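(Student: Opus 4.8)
The plan is to exploit the fact that the shell $\cs_\beta^{\,\epsilon}$ is simply a vertical translate of $\cs_\beta$ and that $\phi_{(c,s)}$ commutes with such vertical translations. Concretely, let $T_\epsilon\colon(z_1,z_2)\mapsto(z_1,z_2+i\epsilon)$ denote vertical translation, so that by definition $\cs_\beta^{\,\epsilon}=T_\epsilon(\cs_\beta)$. The key observation is that in \eqref{E:AutomorphismDefRedux} the correction added to the second coordinate, namely $2i(\bar c+\beta c)z_1+i(|c|^2+\beta\re(c^2))+s$, depends on $z_1$ but not on $z_2$; hence $\phi_{(c,s)}\circ T_\epsilon=T_\epsilon\circ\phi_{(c,s)}$. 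Granting that $\phi_{(c,s)}$ is an automorphism of $\cs_\beta=\cs_\beta^{0}$ --- which is exactly the content of the proposition establishing \eqref{E:DefinitionPhi_(c,s)} --- one then computes $\phi_{(c,s)}(\cs_\beta^{\,\epsilon})=\phi_{(c,s)}(T_\epsilon(\cs_\beta))=T_\epsilon(\phi_{(c,s)}(\cs_\beta))=T_\epsilon(\cs_\beta)=\cs_\beta^{\,\epsilon}$, which is the desired invariance.

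Alternatively --- and this is really the same computation unwound --- I would verify the claim by direct substitution. Writing $w=\phi_{(c,s)}(z)$ with $z\in\cs_\beta^{\,\epsilon}$, one has $w_1=z_1+c$, and taking imaginary parts of the second coordinate (using $\im(i\zeta)=\re(\zeta)$ together with the reality of $s$) gives $\im(w_2)=\im(z_2)+2\re((\bar c+\beta c)z_1)+|c|^2+\beta\re(c^2)$. Substituting $\im(z_2)=|z_1|^2+\beta\re(z_1^2)+\epsilon$ and expanding $|z_1+c|^2+\beta\re((z_1+c)^2)$ shows that the two sides of the defining equation for $\cs_\beta^{\,\epsilon}$ agree term by term; the constant $\epsilon$ passes through untouched precisely because the correction term is $z_2$-independent. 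This is the same cross-term bookkeeping already carried out in the proof of \eqref{E:DefinitionPhi_(c,s)}, now with the harmless additive constant $\epsilon$ along for the ride.

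Finally, to conclude that $\phi_{(c,s)}$ is an \emph{automorphism} (and not merely a self-map) of $\cs_\beta^{\,\epsilon}$, I would invoke the group law \eqref{E:AutomorphismComposition}: the maps $\phi_{(c,s)}$ form a group of affine biholomorphisms of $\C^2$, so each is invertible with inverse again of the form $\phi_{(c',s')}$, which likewise preserves $\cs_\beta^{\,\epsilon}$ by the computation above. There is no genuine obstacle here; the only thing to notice is the $z_2$-independence of the vertical correction in \eqref{E:AutomorphismDefRedux}, which is what makes $T_\epsilon$ commute with $\phi_{(c,s)}$ and lets the single-shell result bootstrap to all shells at once.
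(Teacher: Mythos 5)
Your proposal is correct and, in its second paragraph, is essentially identical to the paper's own proof: a direct substitution of $\operatorname{Im}(z_2)=|z_1|^2+\beta\operatorname{Re}(z_1^2)+\epsilon$ into $\operatorname{Im}(\phi_2)$, with the constant $\epsilon$ passing through because the vertical correction is $z_2$-independent. Your additional observations --- that $\phi_{(c,s)}$ commutes with the translation $T_\epsilon$ so the $\epsilon=0$ case bootstraps to all shells, and that invertibility follows from the group law \eqref{E:AutomorphismComposition} --- are correct refinements of the same computation rather than a different route (the paper leaves the ``automorphism, not merely self-map'' point implicit).
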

\begin{proof}
Choose any $z \in \cs_{\beta}^{\,\epsilon}$.  Then $\im(z_2) = |z_1|^2 +\beta\re(z_1^2)+\epsilon$.  Writing the components of $\phi_{(c,s)}(z_1,z_2)$ as $(\phi_1,\phi_2)$, we see
\begin{align*}
|\phi_1|^2 = |z_1|^2 + |c|^2 + 2\re(\bar{z_1}c), \qquad \beta\re(\phi_1^2) = \beta\re(z_1^2) +2\beta\re(z_1c)+\beta\re(c^2),
\end{align*}
and
\begin{align*}
\im(\phi_2) = \im(z_2) + \beta\re(z_1^2) + \epsilon + 2\re(\bar{z_1} c) + 2\beta\re(z_1c) + |c|^2 + \beta\re(c^2).
\end{align*}
This means $\phi_{(c,s)}(z_1,z_2) = (\phi_1,\phi_2)\in \cs_{\beta}^{\,\epsilon}$.
\end{proof}

\begin{corollary}\label{C:SendZtoAxis}
Fix $z\in \cs_{\beta}^{\,\epsilon}$.  There is a unique $\phi_{(c(z),\,s(z))}$ which sends $z=(z_1,z_2) \mapsto (0,i\epsilon)$.
\end{corollary}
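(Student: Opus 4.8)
The plan is to solve the equation $\phi_{(c,s)}(z_1,z_2)=(0,i\epsilon)$ directly, reading off $c$ from the first coordinate and $s$ from the second. Since the first component of $\phi_{(c,s)}$ in \eqref{E:AutomorphismDefRedux} is $z_1+c$, matching it to $0$ forces $c=c(z)=-z_1$, and this value is manifestly unique. The whole argument then hinges on checking that the single remaining free real parameter $s$ can absorb the second coordinate.

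With $c=-z_1$ fixed, I would substitute into the second component of \eqref{E:AutomorphismDefRedux} and separate it into real and imaginary parts. Writing $z_2=u_2+iv_2$ and expanding $(\bar{z_1}+\beta z_1)z_1=|z_1|^2+\beta z_1^2$, a short computation presents the second coordinate as
\[
u_2+2\beta\,\im(z_1^2)+s \;+\; i\big(v_2-|z_1|^2-\beta\,\re(z_1^2)\big).
\]
Its real part must vanish, and this uniquely forces $s=s(z)=-u_2-2\beta\,\im(z_1^2)$; no choice is left.

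The key point — and the only step that uses the hypothesis $z\in\cs_\beta^{\,\epsilon}$ — is that the imaginary part above equals $\epsilon$ automatically, with $s$ playing no role in it. Indeed, membership in the shell means $v_2=\im(z_2)=|z_1|^2+\beta\,\re(z_1^2)+\epsilon$, so that $v_2-|z_1|^2-\beta\,\re(z_1^2)=\epsilon$ with no further adjustment needed. Hence the real constraint pins down $s$ while the imaginary constraint is satisfied for free, giving existence and uniqueness of $(c(z),s(z))$ at once. This is the same bookkeeping that drove Theorem \ref{T:CSTParametrization}, and there is no genuine obstacle beyond verifying that the shell-defining relation is precisely what cancels the imaginary part — a cancellation already anticipated by Proposition \ref{P:AutOfShells}, which shows $\phi_{(c,s)}$ preserves each $\cs_\beta^{\,\epsilon}$ and hence must carry the distinguished point $(0,i\epsilon)\in\cs_\beta^{\,\epsilon}$ back into the same shell.
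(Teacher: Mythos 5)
Your proposal is correct and follows essentially the same route as the paper: the paper's proof simply exhibits $c(z)=-z_1$ and $s(z)=-\re(z_2)-2\beta\,\im(z_1^2)$ and notes that uniqueness follows from the form of \eqref{E:AutomorphismDefRedux}, which is exactly what your coordinate-matching computation verifies in detail. Your explicit check that the shell condition $\im(z_2)=|z_1|^2+\beta\,\re(z_1^2)+\epsilon$ forces the imaginary part to equal $\epsilon$ is the verification the paper leaves to the reader.
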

\begin{proof}
It can be verified that the choice of
\begin{align}\label{E:C(z)andS(z)def}
c(z) = -z_1, \qquad s(z) = -\re(z_2) - 2\beta\im(z_1^2)
\end{align}
sends $z \mapsto (0,i\epsilon)$.  Uniqueness follows from the form of \eqref{E:AutomorphismDefRedux}.
\end{proof}



It was mentioned in Remark \ref{R:AffineTransforms} that the maps $\phi_{(c,s)}$ preserve both volume and the boundary measure $\sigma = dx_1\wedge dy_1 \wedge dx_2$.  The Leray kernel $\mathscr{L}_{\cs}$ is defined in equation \eqref{E:LerayKernel} as an $(n,n-1)$-form, but when $\cs=\cs_{\beta}$ we may think of as $\mathscr{L}_{\beta}(z,\zeta) = \ell_{\beta}(z,\zeta)\sigma(\zeta)$, where $\ell_{\beta}$ is a function (i.e. a $(0,0)$-form) times the measure $\sigma$.  This coefficient function satisfies the following invariance property:

\begin{theorem}\label{T:ellInvariance}
Let $\ell_{\beta}$ denote the coefficient function of $\mathscr{L}_{\beta}$ written with respect to the measure $\sigma$.  Fix a point $z \in \Omega_{\beta}$, choose the unique $\epsilon$ such that $z \in \cs_{\beta}^{\,\epsilon}$, and let $\phi^* = \phi_{(c(z),s(z))}$ denote the map in Corollary \ref{C:SendZtoAxis} sending $z \mapsto (0,i\epsilon)$.  Then
\begin{equation}\label{E:ellInvariance}
\ell_{\beta}(\phi^*(z),\phi^*(\zeta)) = \ell_{\beta}(z,\zeta).
\end{equation}
\end{theorem}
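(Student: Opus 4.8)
The plan is to reduce the statement to the invariance of the single scalar quantity appearing in the denominator of the Leray kernel. First I would make the coefficient function explicit: combining the formula \eqref{E:LerayKernelS_Beta} for $\mathscr{L}_{\beta}$ with the identity $d\zeta_2\wedge d\bar\zeta_1\wedge d\zeta_1 = 2i\,\sigma$ recorded in Section \ref{SS:L2ViaSizeEstimates}, one sees that
\[
\ell_{\beta}(z,\zeta) = \frac{1}{4\pi^2\, D(z,\zeta)^2}, \qquad D(z,\zeta) := (\bar\zeta_1+\beta\zeta_1)(\zeta_1-z_1) + \tfrac{i}{2}(\zeta_2-z_2).
\]
Since $\ell_{\beta}$ depends on $(z,\zeta)$ only through $D$, it suffices to prove that $D$ is invariant, i.e. $D(\phi^*(z),\phi^*(\zeta)) = D(z,\zeta)$. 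In fact I would establish the stronger and cleaner statement that $D(\phi_{(c,s)}(z),\phi_{(c,s)}(\zeta)) = D(z,\zeta)$ for \emph{every} $c\in\C$, $s\in\R$; the particular map $\phi^*$ of Corollary \ref{C:SendZtoAxis} is then just one instance.

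The computation is a direct substitution using \eqref{E:AutomorphismDefRedux}. Writing $\phi_{(c,s)}(\zeta)=(\zeta_1',\zeta_2')$ and $\phi_{(c,s)}(z)=(z_1',z_2')$, one finds that the three combinations entering $D$ simplify to
\[
\zeta_1'-z_1' = \zeta_1-z_1, \qquad
\bar\zeta_1'+\beta\zeta_1' = (\bar\zeta_1+\beta\zeta_1) + (\bar c+\beta c), \qquad
\zeta_2'-z_2' = (\zeta_2-z_2) + 2i(\bar c+\beta c)(\zeta_1-z_1),
\]
the additive constant and the $s$-term cancelling in the last difference. Substituting these into $D$, the extra first-order term $(\bar c+\beta c)(\zeta_1-z_1)$ produced by the middle factor is cancelled exactly by the contribution $\tfrac{i}{2}\cdot 2i(\bar c+\beta c)(\zeta_1-z_1) = -(\bar c+\beta c)(\zeta_1-z_1)$ coming from the third, leaving $D(\phi_{(c,s)}(z),\phi_{(c,s)}(\zeta)) = D(z,\zeta)$.

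Specializing to $\phi^* = \phi_{(c(z),s(z))}$ and squaring then yields $\ell_{\beta}(\phi^*(z),\phi^*(\zeta)) = \ell_{\beta}(z,\zeta)$, which is \eqref{E:ellInvariance}. I do not expect a genuine analytic obstacle here: the only real content is the exact cancellation above, and that is the step I would carry out carefully. Conceptually the identity reflects two facts already present in the paper, namely that $\mathscr{L}_{\beta}$ transforms as a $(2,1)$-form under the affine automorphisms $\phi_{(c,s)}$ — so that $D$, which is a normalization of $\langle\partial\rho(\zeta),\zeta-z\rangle$ in the $M_2$-picture of Example \ref{Ex:M2Example}, is unchanged — together with the fact from Remark \ref{R:AffineTransforms} that $\phi_{(c,s)}$ preserves the measure $\sigma$. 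The direct computation simply makes both transparent at once, which is why it is preferable to invoking a transformation law with its attendant Jacobian bookkeeping.
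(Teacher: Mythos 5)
Your proof is correct, and the key cancellation checks out: writing $D(z,\zeta)=(\bar\zeta_1+\beta\zeta_1)(\zeta_1-z_1)+\tfrac{i}{2}(\zeta_2-z_2)$, the increment $(\bar c+\beta c)(\zeta_1-z_1)$ from the factor $\bar\zeta_1'+\beta\zeta_1'$ is exactly killed by $\tfrac{i}{2}\cdot 2i(\bar c+\beta c)(\zeta_1-z_1)$ from $\zeta_2'-z_2'$, and $\ell_\beta=\tfrac{1}{4\pi^2}D^{-2}$ follows from \eqref{E:LerayModelHypersurface3.1} together with $d\zeta_2\wedge d\bar\zeta_1\wedge d\zeta_1=2i\,\sigma$. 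The paper proves the same identity by a direct substitution, but it does so entirely in the $(r,s,t)$ parametrization of Theorem \ref{T:CSTParametrization}: it writes out $\ell_\beta$ explicitly as \eqref{E:ellDef}, computes the action of the specific map $\phi^*=\phi_{(c(z),s(z))}$ on the parameters ($r_z,t_z,s_z\mapsto 0$ and $r_\zeta\mapsto r_\zeta-r_z$, $t_\zeta\mapsto t_\zeta-t_z$, $s_\zeta\mapsto s_\zeta-s_z+4t_z(r_z-r_\zeta)$), and verifies equality of the two resulting expressions. Your route buys two things: it avoids the longer real-coordinate bookkeeping, and it yields the stronger statement that $D$ — hence $\ell_\beta$ — is invariant under \emph{every} $\phi_{(c,s)}$ with $c\in\C$, $s\in\R$, as a purely algebraic identity on $\C^2\times\C^2$ that does not even require $z$ or $\zeta$ to lie on a shell; the theorem as stated is the special case $(c,s)=(c(z),s(z))$. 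What the paper's version buys is that the $(r,s,t)$ expression \eqref{E:ellDef} is exactly the form needed in the subsequent Lemmas \ref{L:ApdxB.2_Lem1} and \ref{L:ApdxB.2_Lem2}, so the explicit computation is not wasted. You are also right that the measure-preservation of $\phi_{(c,s)}$ from Remark \ref{R:AffineTransforms} is a separate ingredient, needed only at the later step \eqref{E:l_betaInvariance} where the invariance of $\ell_\beta$ is converted into constancy of the $L^2$-norm over each shell.
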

\begin{proof}
Following the parametrization of $\cs_{\beta}$ with the automorphisms $\phi_{(r,s)}$ in Section \ref{SS:ReparametrizingTheKernel}, write the points $z \in \cs_{\beta}^{\,\epsilon}, \, \zeta \in \cs_{\beta}$ as
\begin{align*}
z &= \left(r_z +it_z, \, s_z - 2(1+\beta)r_z t_z + i\left[(1+\beta)r_z^2 +(1-\beta)t_z^2 + \epsilon \right]\right), \\
\zeta &= \left(r_{\zeta} +it_{\zeta}, \, s_{\zeta} - 2(1+\beta)r_{\zeta} t_{\zeta} + i\left[(1+\beta)r_{\zeta}^2 +(1-\beta)t_{\zeta}^2\right]\right).
\end{align*}
Starting from the definition of $\bm{L}_{\beta}f(z)$ in equation \eqref{E:LerayModelHypersurface4.1}, we first note that 
\begin{align*}
d\zeta_2 \wedge d\bar\zeta_1 \wedge d\zeta_1 &= 2i \, ds_{\zeta}\wedge dr_{\zeta}\wedge dt_{\zeta} \\
&= 2i\,\sigma(\zeta).
\end{align*}

Repeating the steps from \eqref{E:DenomPiece1} through \eqref{E:LerayTransAutoReParam} -- with the only difference arising from the fact that now $z\in \cs_{\beta}^{\,\epsilon}$ -- we find that $\bm{L}_{\beta}f(z) = \int_{\cs_{\beta}} f(\zeta)\ell_{\beta}(z,\zeta)\sigma(\zeta)$,
where 
\begin{align}\label{E:ellDef}
\ell_{\beta}(z,\zeta) &= \pi^{-2} \Big[\big((1+\beta)(r_z-r_{\zeta})^2+ (1-\beta)(t_z-t_{\zeta})^2+\epsilon \big) \\
& \qquad \qquad \qquad \qquad+ i\big(2(r_z-r_{\zeta})(t_z+t_{\zeta}+\beta(t_z-t_{\zeta})) - (s_z-s_{\zeta})\big) \Big]^{-2}. \notag
\end{align}

Written with respect to this parametrization, the subscripts of the map $\phi^* =\phi_{(c(z),s(z))}$ defined in \eqref{E:C(z)andS(z)def} take the form
\begin{equation}\label{E:CandSReparametrized}
c(z) = -r_z-it_z, \qquad s(z) = -s_z+2(1-\beta)r_zt_z.
\end{equation}
By construction, $\phi^*(z) = (0,i\epsilon)$.  This is equivalent to saying $\phi^*$ maps 
\begin{equation}\label{E:ZMapsTo}
r_z \mapsto 0, \qquad t_z \mapsto 0, \qquad s_z \mapsto 0.
\end{equation}
Calculating $\phi^*(\zeta)$ is more involved, but starting from \eqref{E:AutomorphismDefRedux} and \eqref{E:CandSReparametrized} it is seen that
\begin{align*}
\phi^*(\zeta) &= \Big((r_{\zeta}-r_z)+i(t_{\zeta}-t_z)\, ,\, s_{\zeta}-s_z + 2(r_z-r_{\zeta})\big((1-\beta)t_z + (1+\beta)t_{\zeta}\big) \\ 
&\qquad \qquad \qquad \qquad \qquad \qquad +i\Big((1+\beta)(r_z-r_{\zeta})^2 + (1-\beta)(t_z-t_{\zeta})^2\Big) \Big). \notag
\end{align*}
This is equivalent to saying that $\phi^*$ maps
\begin{equation}\label{E:ZetaMapsTo}
r_{\zeta} \mapsto r_{\zeta}-r_z, \qquad t_{\zeta} \mapsto t_{\zeta}-t_z,\qquad s_{\zeta} \mapsto s_{\zeta}-s_z + 4t_z(r_z-r_{\zeta}).
\end{equation}

Substituting \eqref{E:ZMapsTo} and \eqref{E:ZetaMapsTo} into \eqref{E:ellDef} shows
\begin{align*}
\ell_{\beta}(\phi^*(z),\phi^*(\zeta)) &= \pi^{-2} \Big[\big((1+\beta)(r_z-r_{\zeta})^2+ (1-\beta)(t_z-t_{\zeta})^2+\epsilon \big) \notag \\
& \qquad \qquad + i\big(2(\beta-1)(r_z-r_{\zeta})(t_z-t_{\zeta}) + s_{\zeta}-s_z +4t_z(r_z-r_{\zeta})\big) \Big]^{-2} \notag \\
&= \pi^{-2} \Big[\big((1+\beta)(r_z-r_{\zeta})^2+ (1-\beta)(t_z-t_{\zeta})^2+\epsilon \big) \\
& \qquad \qquad \qquad \qquad+ i\big(2(r_z-r_{\zeta})(t_z+t_{\zeta}+\beta(t_z-t_{\zeta})) - (s_z-s_{\zeta})\big) \Big]^{-2}.
\end{align*}
This last line equals $\ell_{\beta}(z,\zeta)$.
\end{proof}

Theorem \ref{T:ellInvariance} shows that for fixed $\epsilon > 0$, the $L^2(\cs_{\beta},\sigma)$ norm of $\ell(z,\cdot)$ remains constant as $z$ varies in $\cs_{\beta}^{\,\epsilon}$.  Indeed, the map $\phi^* = \phi_{(c(z),s(z))}$ maps the point $z$ to $(0,i\epsilon)$.  Because the maps $\phi_{(c,s)}$ -- and their inverses, which have the same form -- preserve the measure $\sigma$,
\begin{align}
\int_{\cs_{\beta}} |\ell_{\beta}(z,\zeta)|^2 \sigma(\zeta) &= \int_{\cs_{\beta}} \left|\ell_{\beta}(\phi^*(z),\phi^*(\zeta))\right|^2 \sigma(\zeta) \notag \\
&= \int_{\cs_{\beta}} \left|\ell_{\beta}((0,i\epsilon),\zeta)\right|^2 \sigma(\zeta).  \label{E:l_betaInvariance}
\end{align}

\begin{remark}\label{R:BoltTransformationLaw}
It can be shown that $\sigma$ is a constant multiple of Fefferman hypersurface measure (see \cite{Bar06,Bar16,Fef79,Gup17}) on $\cs_{\beta}$.  Theorem \ref{T:ellInvariance} can be deduced from the general transformation law given in \cite{Bol05} applying to $\mathscr{L}_{\cs}$ written with respect to Fefferman measure.  $\lozenge$
\end{remark}

\begin{remark}\label{R:NonIsoDilation}
For $\alpha>0$, consider the {\em non-isotropic dilation} map
\begin{equation}\label{E:NonIsoDilation}
\delta_{\alpha}(z_1,z_2) = \left(\sqrt{\alpha}\,z_1,\alpha z_2\right).
\end{equation}
It can be checked that these maps are automorphisms of both $\cs_{\beta}$ and $\Omega_{\beta}$.  More generally, $\delta_{\alpha}$ is a bijection from $\cs_{\beta}^{\epsilon} \to \cs_{\beta}^{\alpha\epsilon}$.  These maps no longer preserve volume or the surface measure $\sigma$, but the transformation law in \cite{Bol05} still applies to action of $\delta_{\alpha}$. $\lozenge$

\end{remark}

\subsection{Each $f\in L^2(\cs_{\beta},\sigma)$ generates a holomorphic function $\bm{L}_{\beta}f$ on $\Omega_{\beta}$}\label{SS:L_beta(f)isHolo}

Building on the identity \eqref{E:l_betaInvariance}, we state the following proposition.

\begin{proposition}\label{P:ApdxB.2_Prop}
The integral
\begin{equation*}
\int_{\cs_{\beta}} \left|\ell_{\beta}((0,i\epsilon),\zeta)\right|^2 \sigma(\zeta) = \frac{1}{4\pi^2\epsilon^2\sqrt{1-\beta^2}}.
\end{equation*}
\end{proposition}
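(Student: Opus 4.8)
The plan is to evaluate the integral directly from the explicit formula \eqref{E:ellDef} for $\ell_\beta$. First I would specialize to $z=(0,i\epsilon)$: in the parametrization of Section \ref{SS:ReparametrizingTheKernel} this is the point with $r_z=t_z=s_z=0$, so \eqref{E:ellDef} collapses to
\[
\ell_\beta\big((0,i\epsilon),\zeta\big)=\pi^{-2}\big[P+iQ\big]^{-2},
\]
where $P=(1+\beta)r_\zeta^2+(1-\beta)t_\zeta^2+\epsilon$ and $Q=s_\zeta-2(1-\beta)r_\zeta t_\zeta$. Hence $\big|\ell_\beta((0,i\epsilon),\zeta)\big|^2=\pi^{-4}(P^2+Q^2)^{-2}$. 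Using \eqref{E:DSDCDT} together with $\sigma=dx_1\wedge dy_1\wedge dx_2$, the measure is $\sigma=dr_\zeta\,ds_\zeta\,dt_\zeta$ with no stray constant, so the integral in question becomes $\pi^{-4}\int_{\R^3}(P^2+Q^2)^{-2}\,dr_\zeta\,ds_\zeta\,dt_\zeta$.

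Next I would integrate out $s_\zeta$ first. Since $P$ is independent of $s_\zeta$ while $Q$ is an affine function of $s_\zeta$ with unit coefficient, a translation reduces the inner integral to the standard evaluation $\int_{-\infty}^{\infty}(P^2+Q^2)^{-2}\,dQ=\tfrac{\pi}{2}P^{-3}$, valid because $P>0$. This leaves $\tfrac12\pi^{-3}\int_{\R^2}P^{-3}\,dr_\zeta\,dt_\zeta$.

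Then the anisotropy is removed by the linear substitution $u=\sqrt{1+\beta}\,r_\zeta$, $v=\sqrt{1-\beta}\,t_\zeta$, whose Jacobian contributes the factor $(1-\beta^2)^{-1/2}$ and turns $P$ into $u^2+v^2+\epsilon$. Passing to polar coordinates and then setting $w=\rho^2$ gives $\int_{\R^2}(u^2+v^2+\epsilon)^{-3}\,du\,dv=\tfrac{\pi}{2}\epsilon^{-2}$. Assembling the three numerical factors yields
\[
\tfrac12\pi^{-3}\cdot(1-\beta^2)^{-1/2}\cdot\tfrac{\pi}{2}\epsilon^{-2}
=\frac{1}{4\pi^2\epsilon^2\sqrt{1-\beta^2}},
\]
as claimed.

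There is no deep obstacle here: the computation is routine once the substitution $z=(0,i\epsilon)$ is made. The points demanding care are purely bookkeeping, namely correctly reading off that $(0,i\epsilon)$ corresponds to $r_z=t_z=s_z=0$, correctly identifying $\sigma$ with $dr_\zeta\,ds_\zeta\,dt_\zeta$ via \eqref{E:DSDCDT}, and tracking the elementary one-dimensional integral together with the Jacobian of the anisotropic rescaling; an error in either of the latter two would spoil both the power of $\epsilon$ and the $\sqrt{1-\beta^2}$ factor.
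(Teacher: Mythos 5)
Your computation is correct and follows essentially the same route as the paper: integrate out $s_{\zeta}$ via $\int_{\R}(P^2+Q^2)^{-2}\,dQ=\tfrac{\pi}{2}P^{-3}$, then remove the anisotropy with the rescaling of Jacobian $(1-\beta^2)^{-1/2}$. The only difference is organizational: the paper first factors out the $\epsilon^{-2}$ scaling as a separate lemma (via the non-isotropic dilation $\delta_{\epsilon}$) and then evaluates the $\epsilon=1$ integral, whereas you let the $\epsilon$-dependence emerge from the final polar-coordinate integral; both yield the same constants.
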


The proof of Proposition \ref{P:ApdxB.2_Prop} is split into the following two computational lemmas.

\begin{lemma}\label{L:ApdxB.2_Lem1}
For each $\epsilon>0$,
\begin{equation*}
\int_{\cs_{\beta}} \left|\ell_{\beta}((0,i\epsilon),\zeta)\right|^2 \sigma(\zeta) = \frac{1}{\epsilon^2}\int_{\cs_{\beta}} \left|\ell_{\beta}((0,i),\zeta)\right|^2 \sigma(\zeta).
\end{equation*}
\end{lemma}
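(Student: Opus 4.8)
The plan is to reduce everything to the explicit formula \eqref{E:ellDef} and then exploit the non-isotropic dilation $\delta_{\alpha}$ from Remark \ref{R:NonIsoDilation}. First I would specialize \eqref{E:ellDef} to the base point $z=(0,i\epsilon)$. In the $(r,s,t)$-parametrization of Section \ref{SS:ReparametrizingTheKernel} this point corresponds to $r_z=t_z=s_z=0$, so the coefficient function collapses to
\[
\ell_{\beta}((0,i\epsilon),\zeta)=\pi^{-2}\Big[\big((1+\beta)r_{\zeta}^2+(1-\beta)t_{\zeta}^2+\epsilon\big)+i\big(s_{\zeta}-2(1-\beta)r_{\zeta}t_{\zeta}\big)\Big]^{-2},
\]
a function of $(r_{\zeta},s_{\zeta},t_{\zeta})$ alone.

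Next I would perform the anisotropic substitution $r_{\zeta}=\sqrt{\epsilon}\,\tilde r$, $t_{\zeta}=\sqrt{\epsilon}\,\tilde t$, $s_{\zeta}=\epsilon\,\tilde s$, which is precisely the action of $\delta_{1/\epsilon}$ from \eqref{E:NonIsoDilation} written in these coordinates: $\delta_{\alpha}$ sends $(r,s,t)\mapsto(\sqrt{\alpha}\,r,\alpha\,s,\sqrt{\alpha}\,t)$ and restricts to a bijection of $\cs_{\beta}=\cs_{\beta}^{\,0}$ onto itself. The point of the substitution is that every term in the bracket above is homogeneous of weight one under this scaling: the real part becomes $\epsilon\big((1+\beta)\tilde r^2+(1-\beta)\tilde t^2+1\big)$ and the imaginary part becomes $\epsilon\big(\tilde s-2(1-\beta)\tilde r\tilde t\big)$. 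Hence the whole bracket equals $\epsilon$ times the corresponding bracket with $\epsilon$ replaced by $1$, and therefore $\ell_{\beta}((0,i\epsilon),\zeta)=\epsilon^{-2}\,\ell_{\beta}((0,i),\tilde\zeta)$, so that $\big|\ell_{\beta}((0,i\epsilon),\zeta)\big|^2=\epsilon^{-4}\,\big|\ell_{\beta}((0,i),\tilde\zeta)\big|^2$.

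Finally I would track the measure. Since $\sigma=dx_1\wedge dy_1\wedge dx_2$ equals $ds_{\zeta}\wedge dr_{\zeta}\wedge dt_{\zeta}$ as a measure by \eqref{E:DSDCDT}, the substitution has Jacobian $\sqrt{\epsilon}\cdot\sqrt{\epsilon}\cdot\epsilon=\epsilon^2$, i.e. $\sigma(\zeta)=\epsilon^2\,\sigma(\tilde\zeta)$. Combining the two scalings gives
\[
\int_{\cs_{\beta}}\big|\ell_{\beta}((0,i\epsilon),\zeta)\big|^2\,\sigma(\zeta)=\epsilon^{-4}\cdot\epsilon^{2}\int_{\cs_{\beta}}\big|\ell_{\beta}((0,i),\tilde\zeta)\big|^2\,\sigma(\tilde\zeta)=\frac{1}{\epsilon^2}\int_{\cs_{\beta}}\big|\ell_{\beta}((0,i),\zeta)\big|^2\,\sigma(\zeta),
\]
which is the claim.

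The only point demanding care is the bookkeeping in the first two steps: one must verify that the anisotropic weights $(\tfrac12,\tfrac12,1)$ attached to $(r_{\zeta},t_{\zeta},s_{\zeta})$ simultaneously render both the real and the imaginary parts of the bracket homogeneous of weight one. This is exactly the reflection of the fact that $\delta_{\epsilon}$ is an automorphism of $\cs_{\beta}$ under which the Leray coefficient transforms homogeneously, and it is what makes the two powers of $\epsilon$ (one from the kernel, one from the Jacobian) combine to the stated factor $\epsilon^{-2}$.
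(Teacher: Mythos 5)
Your proof is correct and is essentially identical to the paper's: the paper also writes $\ell_{\beta}((0,i\epsilon),\zeta)$ out via \eqref{E:ellDef} and applies the anisotropic substitution $(r_{\zeta},s_{\zeta},t_{\zeta})\mapsto(\sqrt{\epsilon}\,r_{\zeta},\epsilon\,s_{\zeta},\sqrt{\epsilon}\,t_{\zeta})$ in \eqref{E:ApdxB_COV1}, which it likewise identifies (in the remark that follows) as the non-isotropic dilation $\delta_{\epsilon}$. Your bookkeeping of the weight-one homogeneity of the bracket and the Jacobian factor $\epsilon^{2}$ matches the paper's computation exactly.
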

\begin{proof}
From \eqref{E:ellDef}, $\ell_{\beta}((0,i\epsilon),\zeta)$ may be written by setting $r_z=t_z=s_z=0$.

\begin{align}
\int_{\cs_{\beta}} \left|\ell_{\beta}((0,i\epsilon),\zeta)\right|^2 \sigma(\zeta) 
&= \frac{1}{\pi^4} \int_{\R^3} \frac{ds_{\zeta}\wedge dr_{\zeta} \wedge dt_{\zeta}}{\left|(1+\beta) r_{\zeta}^2 + (1-\beta)t_{\zeta}^2 + \epsilon + i\left(s_{\zeta}-2(1-\beta)r_{\zeta}t_{\zeta}\right)\right|^4} \notag \\
&= \frac{1}{\epsilon^2\pi^4} \int_{\R^3} \frac{ds_{\zeta}\wedge dr_{\zeta} \wedge dt_{\zeta}}{\left|(1+\beta) r_{\zeta}^2 + (1-\beta)t_{\zeta}^2 + 1 + i\left(s_{\zeta}-2(1-\beta)r_{\zeta}t_{\zeta}\right)\right|^4} \label{E:ApdxB_COV1} \\
&= \frac{1}{\epsilon^2}\int_{\cs_{\beta}} \left|\ell_{\beta}((0,i),\zeta)\right|^2 \sigma(\zeta). \label{E:ApdxB_IntEq}
\end{align}
\end{proof}
\begin{remark}
The change of variable $(r_{\zeta},s_{\zeta},t_{\zeta}) \mapsto (\sqrt{\epsilon}\,r_{\zeta}, \epsilon\,s_{\zeta},\sqrt{\epsilon}\,t_{\zeta} )$ used in \eqref{E:ApdxB_COV1} is the non-isotropic dilation $\delta_{\epsilon}$ in \eqref{E:NonIsoDilation}.  It is also possible to use the transformation law alluded to in Remark \ref{R:NonIsoDilation} to immediately deduce Lemma \ref{L:ApdxB.2_Lem1}. $\lozenge$
\end{remark}

\begin{lemma}\label{L:ApdxB.2_Lem2}
The integral
\begin{equation*}
\int_{\cs_{\beta}} \left|\ell_{\beta}((0,i),\zeta)\right|^2 \sigma(\zeta) = \frac{1}{4\pi^2\sqrt{1-\beta^2}}.
\end{equation*}
\end{lemma}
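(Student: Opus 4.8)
The plan is to write out the integrand explicitly and reduce the three-dimensional integral to elementary one- and two-dimensional integrals via a substitution that diagonalizes the quadratic form. Setting $r_z=t_z=s_z=0$ and $\epsilon=1$ in \eqref{E:ellDef} gives
\[
\ell_{\beta}((0,i),\zeta)=\pi^{-2}\Big[(1+\beta)r_{\zeta}^2+(1-\beta)t_{\zeta}^2+1+i\big(s_{\zeta}-2(1-\beta)r_{\zeta}t_{\zeta}\big)\Big]^{-2},
\]
so that $|\ell_{\beta}((0,i),\zeta)|^2=\pi^{-4}(P^2+Q^2)^{-2}$, where $P=(1+\beta)r_{\zeta}^2+(1-\beta)t_{\zeta}^2+1\ge 1$ is the real part and $Q=s_{\zeta}-2(1-\beta)r_{\zeta}t_{\zeta}$ is the imaginary part.

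First I would integrate in the $s_{\zeta}$ variable. For fixed $(r_{\zeta},t_{\zeta})$ the substitution $u=s_{\zeta}-2(1-\beta)r_{\zeta}t_{\zeta}$ absorbs the cross term and leaves the single-variable integral $\int_{-\infty}^{\infty}(P^2+u^2)^{-2}\,du$, which equals $\pi/(2P^3)$ by the standard formula $\int_{-\infty}^{\infty}(a^2+u^2)^{-2}\,du=\pi/(2a^3)$, valid here since $P\ge 1>0$. This reduces the problem to evaluating
\[
\frac{\pi}{2}\int_{\R^2}\frac{dr_{\zeta}\,dt_{\zeta}}{\big((1+\beta)r_{\zeta}^2+(1-\beta)t_{\zeta}^2+1\big)^3}.
\]

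Next I would diagonalize the positive-definite quadratic form through the linear change of variables $a=\sqrt{1+\beta}\,r_{\zeta}$, $b=\sqrt{1-\beta}\,t_{\zeta}$, whose Jacobian produces the factor $(1-\beta^2)^{-1/2}$; this is precisely where the $\sqrt{1-\beta^2}$ in the answer originates. The integral becomes $\tfrac{\pi}{2}(1-\beta^2)^{-1/2}\int_{\R^2}(a^2+b^2+1)^{-3}\,da\,db$, and passing to polar coordinates followed by the substitution $v=\rho^2+1$ evaluates the radial integral to $\pi/2$. Assembling constants yields $\int_{\R^3}|\ell_{\beta}((0,i),\zeta)|^2=\pi^{-4}\cdot\tfrac{\pi^2}{4}(1-\beta^2)^{-1/2}=\tfrac{1}{4\pi^2\sqrt{1-\beta^2}}$, as claimed.

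There is no genuine obstacle here; the computation is elementary once the integrand is written out. The only points demanding care are recognizing that the cross term $-2(1-\beta)r_{\zeta}t_{\zeta}$ can be shifted away harmlessly during the $s_{\zeta}$-integration (so the inner integral depends on $(r_{\zeta},t_{\zeta})$ only through $P$), and tracking the multiplicative constants exactly, since the statement asserts a precise value rather than a bound.
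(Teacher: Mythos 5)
Your proposal is correct and follows essentially the same route as the paper: integrate out $s_{\zeta}$ first (the cross term shifts away, yielding $\pi/(2P^3)$), then diagonalize the quadratic form with the change of variables that produces the Jacobian factor $(1-\beta^2)^{-1/2}$, and finish with the elementary planar integral $\int_{\R^2}(a^2+b^2+1)^{-3}\,da\,db=\pi/2$. All constants check out.
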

\begin{proof}
Setting $a(r_{\zeta},t_{\zeta})=(1+\beta)r_{\zeta}^2 + (1-\beta)t_{\zeta}^2 + 1$, equation \eqref{E:ApdxB_IntEq} shows that
\begin{align}
\pi^4 \int_{\cs_{\beta}} \left|\ell_{\beta}((0,i),\zeta)\right|^2 \sigma(\zeta) &= \int_{\R^2} \left[\int_{\R} \left(a(r_{\zeta},t_{\zeta})^2 + \left(s_{\zeta}-2(1-\beta)r_{\zeta}t_{\zeta}\right)^2 \right)^{-2}ds_{\zeta} \right] dr_{\zeta} \wedge dt_{\zeta} \notag \\
&:= \int_{\R^2} I(r_{\zeta},t_{\zeta})\, dr_{\zeta} \wedge dt_{\zeta}. \label{L:I(x1,y1)integral}
\end{align}  
A computation shows that for each $r_{\zeta},t_{\zeta}\in\R$, the quantity in brackets
\begin{align*}
I(r_{\zeta},t_{\zeta}) &=  \frac{\pi}{2\left((1+\beta) r_{\zeta}^2 + (1-\beta)t_{\zeta}^2 + 1\right)^{3}}.
\end{align*}
Making the change of variable $(r_{\zeta},t_{\zeta}) \mapsto \left(\frac{r_{\zeta}}{\sqrt{1+\beta}}, \frac{t_{\zeta}}{\sqrt{1-\beta}}\right)$, we see that 
\begin{align*}
\eqref{L:I(x1,y1)integral} &= \frac{\pi}{2\sqrt{1-\beta^2}} \int_{\R^2} \frac{dr_{\zeta}\wedge dt_{\zeta}}{(r_{\zeta}^2+s_{\zeta}^2 +1)^3} = \frac{\pi^2}{4\sqrt{1-\beta^2}}.
\end{align*}
Dividing by $\pi^4$ gives the result.
\end{proof}

\begin{corollary}\label{C:UnifNormBddsOnCompacts}
The function $\Omega_\beta\to\R$ given by $z\mapsto \int_{\cs_{\beta}} |\ell_{\beta}(z,\zeta)|^2 \sigma(\zeta)$ is uniformly bounded on compact subsets of $\Omega_\beta$.
\end{corollary}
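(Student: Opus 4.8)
The plan is to combine the invariance identity \eqref{E:l_betaInvariance} with the explicit evaluation in Proposition \ref{P:ApdxB.2_Prop} to obtain a closed-form expression for the integral as a function of $z$, and then to conclude by a compactness argument. For $z\in\Omega_\beta$, set
\[
\epsilon(z) := \im(z_2) - |z_1|^2 - \beta\re(z_1^2),
\]
which is precisely the unique $\epsilon>0$ with $z\in\cs_\beta^{\,\epsilon}$, i.e.\ the value of the defining function cutting out $\Omega_\beta$. First I would invoke \eqref{E:l_betaInvariance} (whose justification rests on Theorem \ref{T:ellInvariance} together with the $\sigma$-preservation of the maps $\phi_{(c,s)}$) to replace $z$ by the axial point $(0,i\epsilon(z))$, and then apply Proposition \ref{P:ApdxB.2_Prop} to evaluate the resulting integral exactly:
\[
\int_{\cs_\beta} \left|\ell_\beta(z,\zeta)\right|^2 \sigma(\zeta) = \int_{\cs_\beta} \left|\ell_\beta\big((0,i\epsilon(z)),\zeta\big)\right|^2 \sigma(\zeta) = \frac{1}{4\pi^2\,\epsilon(z)^2\,\sqrt{1-\beta^2}}.
\]
Thus the function in question is, up to the fixed constant $\big(4\pi^2\sqrt{1-\beta^2}\big)^{-1}$, simply $\epsilon(z)^{-2}$.

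With this formula in hand the corollary is immediate. Fix a compact set $K\subset\Omega_\beta$. The function $\epsilon$ is continuous on $\C^2$ (being a polynomial in the real and imaginary parts of $z_1,z_2$) and strictly positive on $\Omega_\beta$ by definition of the domain, so it attains a positive minimum $\epsilon_0:=\min_{z\in K}\epsilon(z)>0$ on $K$. Consequently, for every $z\in K$,
\[
\int_{\cs_\beta} \left|\ell_\beta(z,\zeta)\right|^2 \sigma(\zeta) = \frac{1}{4\pi^2\,\epsilon(z)^2\,\sqrt{1-\beta^2}} \le \frac{1}{4\pi^2\,\epsilon_0^2\,\sqrt{1-\beta^2}},
\]
which is the asserted uniform bound on $K$.

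I do not expect any genuine obstacle: the substantive work has already been carried out in Theorem \ref{T:ellInvariance} and Proposition \ref{P:ApdxB.2_Prop}, which together reduce the problem to the elementary observation that the shell parameter is bounded away from zero on compacta. The only points requiring (routine) verification are that the shell index $\epsilon$ of $z\in\cs_\beta^{\,\epsilon}$ coincides with the value of the defining function, and that this value stays uniformly positive on $K$ — both of which follow at once from the continuity and positivity of $\epsilon$ on $\Omega_\beta$ and the compactness of $K$. In particular, the bound degenerates exactly as $z$ approaches $\cs_\beta$ (where $\epsilon(z)\to 0$), consistent with the singular nature of the boundary integral, which is why uniform boundedness can only be expected on compact subsets of the open domain.
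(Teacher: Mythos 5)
Your proposal is correct and follows essentially the same route as the paper: both reduce to the axial point $(0,i\epsilon(z))$ via the invariance identity \eqref{E:l_betaInvariance}, evaluate the integral with Proposition \ref{P:ApdxB.2_Prop}, and conclude from the fact that the shell parameter $\epsilon$ is bounded below by a positive constant on any compact subset of $\Omega_\beta$. You merely spell out the last step (continuity and positivity of the defining function) more explicitly than the paper does.
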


\begin{proof}
Every compact subset of $\Omega_\beta$ is contained in a union of shells $\cup_{\epsilon > \epsilon_0} \, \cs_{\beta}^{\,\epsilon}$ with $\epsilon_0>0$.  The desired conclusion then follows from \eqref{E:l_betaInvariance} and Proposition \ref{P:ApdxB.2_Prop}.
\end{proof}

We are ready to prove the main result of Appendix \ref{A:LbetaConvegence}.  This will verify item (a) from the list in Section \ref{SS:LerayTransform}.

\begin{theorem} $\bm{L}_{\beta}f \in \co(\Omega_{\beta})$ for each $f \in L^2(\cs_{\beta},\sigma)$.
\end{theorem}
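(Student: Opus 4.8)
The plan is to show that the integral defining $\bm{L}_{\beta}f$ converges for every $z\in\Omega_\beta$, is locally bounded, depends continuously on $z$, and is holomorphic. Throughout I write $\bm{L}_\beta f(z)=\int_{\cs_\beta}f(\zeta)\,\ell_\beta(z,\zeta)\,\sigma(\zeta)$, recalling from \eqref{E:LerayKernelS_Beta} that for each fixed $\zeta\in\cs_\beta$ the coefficient $z\mapsto\ell_\beta(z,\zeta)$ is holomorphic on $\Omega_\beta$: its only singularities lie on the complex tangent line to $\cs_\beta$ at $\zeta$, which by strong $\C$-convexity does not meet $\Omega_\beta$. For convergence and local boundedness, note that for $z$ in a compact set $K\subset\Omega_\beta$, Cauchy--Schwarz together with Corollary \ref{C:UnifNormBddsOnCompacts} gives
\[
|\bm{L}_\beta f(z)|\le \|f\|_{L^2(\cs_\beta,\sigma)}\Big(\int_{\cs_\beta}|\ell_\beta(z,\zeta)|^2\,\sigma(\zeta)\Big)^{1/2}\le C_K\,\|f\|_{L^2(\cs_\beta,\sigma)},
\]
so the integral converges absolutely and $\bm{L}_\beta f$ is locally bounded on $\Omega_\beta$.

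Next I would establish continuity, which is the key analytic input. Writing $\epsilon(z)=\im z_2-|z_1|^2-\beta\,\re z_1^2$ (a continuous function, positive precisely on $\Omega_\beta$, that identifies the shell $\cs_\beta^{\,\epsilon(z)}$ containing $z$), Theorem \ref{T:ellInvariance} and Proposition \ref{P:ApdxB.2_Prop} give the explicit value $\int_{\cs_\beta}|\ell_\beta(z,\zeta)|^2\,\sigma(\zeta)=\big(4\pi^2\,\epsilon(z)^2\sqrt{1-\beta^2}\big)^{-1}$. Hence, as $z\to z^0$ in $\Omega_\beta$, the functions $\ell_\beta(z,\cdot)$ converge to $\ell_\beta(z^0,\cdot)$ pointwise (continuity of the kernel in $z$), stay bounded in $L^2(\cs_\beta,\sigma)$, and have convergent $L^2$-norms. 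In a Hilbert space, pointwise a.e.\ convergence of an $L^2$-bounded family forces weak convergence to the pointwise limit, and weak convergence together with convergence of the norms forces norm convergence; thus $\ell_\beta(z,\cdot)\to\ell_\beta(z^0,\cdot)$ in $L^2(\cs_\beta,\sigma)$. Pairing against the fixed $f$ then shows that $\bm{L}_\beta f$ is continuous on $\Omega_\beta$.

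Finally I would upgrade continuity to holomorphicity. Fix $z^0$, an index $j\in\{1,2\}$, and a triangle $T$ lying (with the other coordinate frozen) in a complex line through $z^0$ inside $\Omega_\beta$. The bound from the first step, applied along the compact contour $\partial T$, makes the double integral absolutely convergent, so Fubini's theorem permits interchanging the order of integration:
\[
\oint_{\partial T}\bm{L}_\beta f(z)\,dz_j=\int_{\cs_\beta}f(\zeta)\Big(\oint_{\partial T}\ell_\beta(z,\zeta)\,dz_j\Big)\sigma(\zeta)=0,
\]
the inner contour integral vanishing because $z\mapsto\ell_\beta(z,\zeta)$ is holomorphic. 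Since $\bm{L}_\beta f$ is continuous, Morera's theorem shows it is holomorphic in each variable separately; being continuous and separately holomorphic, Osgood's lemma yields $\bm{L}_\beta f\in\co(\Omega_\beta)$.

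I expect the main obstacle to be the passage to joint holomorphicity, and specifically securing enough regularity of $\bm{L}_\beta f$ to invoke Morera and Osgood. The crucial device is the explicit value of $\int_{\cs_\beta}|\ell_\beta(z,\zeta)|^2\,\sigma(\zeta)$ from Proposition \ref{P:ApdxB.2_Prop}: it converts the merely qualitative pointwise convergence of the kernels into genuine $L^2$-continuity, which is exactly the hypothesis Morera's theorem requires.
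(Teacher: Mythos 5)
Your proof is correct, but it takes a genuinely different route from the paper's. The paper first handles compactly supported $f$ by a standard differentiate-under-the-integral argument, then approximates a general $f\in L^2(\cs_\beta,\sigma)$ by compactly supported $f_j$; Cauchy--Schwarz together with Corollary \ref{C:UnifNormBddsOnCompacts} shows $\bm{L}_\beta f_j\to\bm{L}_\beta f$ uniformly on relatively compact subsets, and a locally uniform limit of holomorphic functions is holomorphic. You instead work directly with $f$ itself: you first prove continuity of $\bm{L}_\beta f$ by upgrading pointwise convergence of $\ell_\beta(z,\cdot)$ to norm convergence in $L^2(\cs_\beta,\sigma)$ via the Radon--Riesz property, using the exact value of $\|\ell_\beta(z,\cdot)\|_{L^2}$ from Theorem \ref{T:ellInvariance} and Proposition \ref{P:ApdxB.2_Prop} in place of a dominating function, and then conclude with Fubini, Morera, and Osgood. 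Both arguments rest on the same quantitative input (the locally uniform $L^2$ bound on the kernel), but the trade-off is clear: the paper's density argument is shorter and avoids Morera/Osgood entirely, while your argument avoids the approximation step and yields a stronger intermediate fact of independent interest, namely the strong $L^2$-continuity of $z\mapsto\ell_\beta(z,\cdot)$ on $\Omega_\beta$. One small point worth making explicit if you write this up: the nonvanishing of the denominator of $\ell_\beta(z,\zeta)$ for $z\in\Omega_\beta$, $\zeta\in\cs_\beta$ need not be deduced abstractly from $\C$-convexity --- it is immediate from \eqref{E:ellDef}, whose real part is bounded below by $\epsilon>0$ on the shell $\cs_\beta^{\,\epsilon}$.
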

\begin{proof}

It will suffice to prove that $\bm{L}_{\beta}f $ is holomorphic on $U$ for  each  relatively compact ball $U\subset\Omega_{\beta}$.

In the special case of compactly-supported $f$ this follows from a standard differentiate-the-integral argument.  

To handle general $f$ we pick a sequence of compactly supported $f_j\in L^2(\cs_{\beta},\sigma)$ with 
$f_j\to f$ in $L^2$.  Then with the use of Corollary \ref{C:UnifNormBddsOnCompacts} we find that
$\bm{L}_{\beta}f_j\to\bm{L}_{\beta}f$ uniformly on $U$ and thus that $\bm{L}_{\beta}f $ is indeed holomorphic on $U$.

%
\end{proof}

\bibliographystyle{acm}
\bibliography{BarEdh18.bib}

\end{document}